\documentclass{amsart}

\usepackage[utf8]{inputenc}
\usepackage[english]{babel}
\usepackage{lmodern}
\usepackage[a4paper, margin=3cm]{geometry}
\usepackage{mathtools}
\usepackage{amsfonts}
\usepackage{amsthm}
\usepackage{amssymb}
\usepackage{listings}
\usepackage{graphicx}
\usepackage{tikz}
\usetikzlibrary{cd}
\usetikzlibrary{babel}
\usepackage{bbold}
\usepackage[colorinlistoftodos,textwidth=2cm]{todonotes}
\usepackage[font=small,labelfont=bf]{caption}
\usepackage[subrefformat=parens]{subcaption}
\usepackage{overpic}
\usepackage{enumerate}
\usepackage[T1]{fontenc}

\usepackage{latexsym}
\usepackage{pgf}
\usepackage{calrsfs,bbm}

\usepackage{amsmath}
\usepackage{theoremref}
\usepackage{mathrsfs}
\usepackage{amsthm}
\usepackage{mathtools}
\usepackage{enumitem}
\usepackage{tikz}
\usetikzlibrary{arrows,decorations.markings,knots,
	hobby,
	decorations.pathreplacing,
	shapes.geometric,
	calc}
\usepackage{float}
\usepackage{pgfkeys}

\usepackage{blkarray}
\usepackage{overpic}
\usepackage{soul}

\setlength{\marginparwidth}{2.5cm}

\usepackage{enumitem}
\setlist{leftmargin=6.5mm}
\setenumerate[1]{label={(\roman*)}}
\setlist[enumerate]{leftmargin=8mm}

\graphicspath{ {images/} }
\usepackage{xcolor}
\usepackage[colorlinks,
    linkcolor={red!50!black},
    citecolor={blue!50!black},
    urlcolor={blue!80!black}]{hyperref}

\newcommand{\Z}{\mathbb{Z}}

\newcommand{\R}{\mathbb{R}}
\newcommand{\K}{\mathcal{K}}
\newcommand{\C}{\mathbb{C}}
\newcommand{\T}{\mathbb{T}}
\newcommand{\A}{\mathcal{A}}
\renewcommand{\Re}{\operatorname{Re}}

\DeclareMathOperator{\lk}{lk}
\newcommand{\sgn}{\operatorname{sgn}}
\newcommand{\sign}{\operatorname{sign}}
\newcommand{\nul}{\operatorname{null}}

\newcommand{\Ker}{\operatorname{Ker}}
\newcommand{\om}{\omega}
\newcommand{\bom}{\overline{\omega}}

\newcommand{\de}{\stackrel{\cdot}{=}}

\newcommand{\LambdaC}{\Lambda^{\mathbb{C}}}

\newtheorem{thm}{Theorem}[section]
\newtheorem{lem}[thm]{Lemma}
\newtheorem{prop}[thm]{Proposition}
\newtheorem{cor}[thm]{Corollary}

\theoremstyle{definition}
\newtheorem{definition}[thm]{Definition}

\newtheorem{exs}[thm]{Examples}

\theoremstyle{remark}
\newtheorem{rem}[thm]{Remark}
\newtheorem{rems}[thm]{Remarks}

\makeatletter
\@namedef{subjclassname@1991}{2020 Mathematics Subject Classification}
\makeatother

 \title{Extended signatures and link concordance}

\author{David Cimasoni}
\address{David Cimasoni -- Section de math\'ematiques, Universit\'e de Gen\`eve, Suisse}
\email{david.cimasoni@unige.ch}

\author{Livio Ferretti}
\address{Livio Ferretti -- Section de math\'ematiques, Universit\'e de Gen\`eve, Suisse}
\email{livio.ferretti@unige.ch}

\author{Iuliia Popova}
\address{Iuliia Popova -- Section de math\'ematiques, Universit\'e de Gen\`eve, Suisse}
\email{Iuliia.Popova@etu.unige.ch}


\begin{document}

\makeatletter
   \providecommand\@dotsep{5}
 \makeatother

\begin{abstract}
The Levine-Tristram signature admits a~$\mu$-variable extension for~$\mu$-component links: it was first defined as an integer valued function on~$(S^1\setminus\{1\})^\mu$, and recently extended to
the full torus~$\T^\mu$. The aim of the present article is to study and use this extended signature.
First, we show that it is constant on the connected components of the complement of the zero-locus of some renormalized Alexander polynomial. Then, we prove that the extended signature is a concordance invariant on an explicit dense subset
of~$\T^\mu$. Finally, as an application, we present an infinite family of~3-component links with the following property:
these links are not concordant to their mirror image, a fact that can be detected neither by the non-exended signatures, nor by the multivariable Alexander polynomial, nor by the Milnor triple linking number.
\end{abstract}

\keywords{Link concordance, multivariable signature, multivariable Alexander polynomial, Hosokawa polynomial}

\subjclass{57K10}

\maketitle

	\section{Introduction}
	\label{sec:intro}
	
The {\em Levine-Tristram signature\/}~\cite{Tro,Mur,Tri,Lev} of an oriented link~$L$ in the three-sphere~$S^3$ is the function
\[
\sigma_L\colon S^1\setminus\{1\}\longrightarrow \Z
\]
defined by~$\sigma_L(\omega)=\sigma(H(\omega))$,
the signature of the Hermitian matrix
\begin{equation}
\label{eq:H}
H(\omega)=(1-\omega)A+(1-\overline{\omega})A^T
\end{equation}
with~$A$ a Seifert matrix for~$L$. Similarly, the {\em Levine-Tristram nullity\/}
is the function~$\eta_L\colon S^1\setminus\{1\}\to \Z$ defined by~$\eta_L(\omega)=\eta(H(\omega))$, where~$\eta$ stands for the nullity.

These invariants enjoy numerous remarkable properties. For example, if~$-L$ denotes the mirror image of~$L$, then~$\sigma_{-L}=-\sigma_L$, so a non-zero signature ensures that the link is not amphicheiral. Also, the function~$\sigma_L$
is constant on the connected components of the complement of the roots of the Alexander polynomial~$\Delta_L$ in~$S^1\setminus\{1\}$. Furthermore, it provides lower bounds on the unknotting number of~$L$, on its splitting number, and on the minimal genus of an orientable surface~$S\subset S^3$ with oriented boundary~$\partial S=L$.
Finally, if~$\omega\in S^1\setminus\{1\}$ is not the root of any polynomial~$p\in\Z[t^{\pm 1}]$ with~$p(1)=\pm 1$, then~$\sigma_L(\omega)$
also yields a lower bound on the {\em topological four-genus\/} of~$L$, i.e. the minimal genus of a locally flat orientable surface~$F$ properly embedded in~$B^4$ with oriented boundary~$\partial F=L$. On this (dense) subset of~$S^1\setminus\{1\}$, the signature and nullity are actually known to be invariant under {\em topological concordance\/}~\cite{NP17}. We refer to the survey~\cite{Conway-survey} for more detailed information on these classical invariants, including references for the facts stated above.

\medskip

Similarly to the Alexander polynomial, the Levine-Tristram signature and nullity admit multivariable extensions. This story is best told in the setting of colored links, that we now recall.
Given a positive integer~$\mu$, a~$\mu$-{\em colored link} is an oriented link~$L$ in~$S^3$ each of whose components
is endowed with a {\em color\/} in~$\{1,\dots,\mu\}$ in such a way that all colors are used. We denote such a colored link by~$L=L_1\cup\dots\cup L_\mu$, where~$L_i$ is the union of the components of color~$i$. Two colored links are {\em isotopic\/} is they are related by an ambient isotopy which is consistent with the orientation and color of each component.
For example, a~$1$-colored link is just an oriented link, while a~$\mu$-component~$\mu$-colored link is an oriented ordered link.

The {\em multivariable signature\/} of a~$\mu$-colored link~$L$ is a function
\[
\sigma_L\colon (S^1\setminus\{1\})^\mu\longrightarrow \Z\,,\quad \omega=(\omega_1,\dots,\omega_\mu)\longmapsto\sigma(H(\omega))\,,
\]
where~$H(\omega)$ is a Hermitian matrix defined via {\em generalized Seifert matrices\/} associated with a generalized
Seifert surface for~$L$ called a {\em C-complex\/}~\cite{Cooper,CF08,DMO}. Similarly, the {\em multivariable nullity\/} of~$L$ is the map~$\eta_L\colon (S^1\setminus\{1\})^\mu\to\Z$ defined by~$\eta_L(\omega)=\eta(H(\omega))$.
In the~$\mu=1$ case, a C-complex is nothing but a Seifert surface for the oriented link~$L$, and~$H(\omega)$ is given
by~\eqref{eq:H}, justifying the notation and the terminology.

Remarkably, all of the properties of the Levine-Tristram signature mentioned above extend to the multivariable setting.
For example, the signature~$\sigma_L$ satisfies~$\sigma_{-L}=-\sigma_L$, and it is constant on the connected components of the complement in~$(S^1\setminus\{1\})^\mu$ of the zeros of the multivariable Alexander polynomial~$\Delta_L(t_1,\dots,t_\mu)$~\cite{CF08}.
Also, if there is no~$p\in\Z[t_1^{\pm 1},\dots,t_\mu^{\pm 1}]$ with~$p(1,\dots,1)=\pm 1$ and~$p(\omega_1,\dots,\omega_\mu)=0$, then~$\sigma_L(\omega_1,\dots,\omega_\mu)$ and~$\eta_L(\omega_1,\dots,\omega_\mu)$
are invariant under {\em topological concordance of colored links\/}~\cite{CNT}, see Definition~\ref{def:concordance}.

\medskip

The slightly peculiar domain of these functions leads to the following natural question: does there exist a sensible extension
of~$\sigma_L$ and~$\eta_L$ from~$(S^1\setminus\{1\})^\mu$ to the full torus~$(S^1)^\mu=:\T^\mu$~?
There is no obvious answer, as the standard definition via (generalized) Seifert matrices yields a zero signature and ill-defined nullity as soon as one coordinate of~$\omega$ is equal to~1. Similarly, the alternative definition pioneered in~\cite{Viro} using the
twisted homology of the exterior of a bounding surface in~$B^4$ is in general not well-defined on the full torus, see e.g.~\cite[Section~4,4]{DFL}.

A positive answer was given in the recent paper~\cite{CMP23}, via a rather technical construction. In a nutshell, the exterior~$X_L:=S^3\setminus\nu(L)$ of the~$\mu$-colored link~$L$ can be glued
along its boundary to an appropriate plumbed~$3$-manifold, yielding a closed oriented~$3$-manifold~$M_L$ that admits a so-called {\em meridional\/} homomorphism~$\varphi\colon H_1(M_L)\to\Z^\mu$. Slightly altering~$L$ if needed, there exists a~$4$-manifold~$W_F$ with~$\partial W_F=M_L$,
endowed with a homomorphism~$\Phi\colon H_1(W_F)\to\Z^\mu$ extending~$\varphi$. The extended signature and nullity
are then defined as the signature and nullity of~$W_F$ with coefficients twisted via~$\Phi$ and~$\omega$ (see Section~\ref{sub:ext} for details).
Note that the aim of~\cite{CMP23} was not these extensions {\em per se\/}, but their
use to understand the behavior of the non-extended signature and nullity when some~$\omega_i$ is close to~$1$. For example, in the~$\mu=1$ case, it was observed that~$\lim_{\omega\to 1}\sigma_L(\omega)$
is equal to the signature of the linking matrix of~$L$ as long as~$(t-1)^{\vert L\vert}$ does not divide~$\Delta_L$, where~$\vert L\vert$ stands for the number of components of~$L$. (This result was first obtained in~\cite{BZ22} via completely different methods, see also Corollary~\ref{cor:cont1} below.)
We refer the reader to~\cite[Theorem~1.6]{CMP23} for much broader results obtained in this fashion.

\medskip

The aim of the present work is to study these extended signature and nullity, and to apply them to link concordance.
We have three main results and one application, that we now summarize.

As a first result, we compute the greatest common divisor~$\widetilde\Delta_L\in\Z[t_1^{\pm 1},\dots,t_\mu^{\pm 1}]=:\Lambda$
of the first elementary ideal of the module~$H_1(M_L;\Lambda)$. This module should be thought of as a natural
renormalised version of the Alexander module~$H_1(X_L;\Lambda)$, and coincides with it in the case of knots.
Similarly, the polynomial~$\widetilde\Delta_L$ is a renormalized multivariable Alexander polynomial, which in the~$\mu=1$ case coincides with the {\em Hosokawa polynomial\/}~\cite{Hos}.
Therefore, the manifold~$M_L$ yields a geometric construction of the Hosokawa polynomial for~$\mu=1$, and allows to define an explicit multivariable version (see Proposition~\ref{prop:Hosokawa}, and Theorem~\ref{thm:intro-const} below).

Our second result is the following: the module~$H_1(M_L;\Lambda)$ yields a filtration
\[
\T^\mu\setminus\{(1,\dots,1)\}=\Sigma_0\supset\Sigma_1\supset\dots\supset\Sigma_{\ell-1}\supset\Sigma_\ell=\emptyset
\]
of the pointed torus by algebraic subvarieties so that for all~$r\ge 0$,~$\sigma_L$  is constant on the connected components of~$\Sigma_r\setminus\Sigma_{r+1}$ (see Theorem~\ref{thm:main}, which also deals with the behavior of~$\eta_L$). Together with the explicit determination of~$\widetilde{\Delta}_L$
from Proposition~\ref{prop:Hosokawa}, this implies the following result (Corollaries~\ref{cor:cont1} and~\ref{cor:cont2}).

\begin{thm}
\label{thm:intro-const}
	The extended Levine-Tristram signature~$\sigma_L\colon S^1\to\Z$ of an oriented link~$L$ is constant on
	the connected components of the complement of the zeros of the Hosokawa polynomial
	\[
	\widetilde\Delta_L(t)=\frac{\Delta_L(t)}{(t-1)^{|L|-1}}\,.
	\]
	The extended signature~$\sigma_L\colon \T^\mu\setminus\{(1,\dots,1)\}\to\Z$ of a~$\mu$-colored link~$L$ with~$\mu>1$ 
	is constant on the connected components of the complement of the zeros of the multivariable Hosokawa polynomial
	\[
	\widetilde\Delta_L(t_1,\dots,t_\mu)\stackrel{\cdot}{=}\prod_{i=1}^\mu (t_i-1)^{\nu_i}\Delta_L(t_1,\dots,t_\mu)\,,
	\]
	where $\nu_i=\Big(\sum_{\substack{K\subset L_i\\ K'\subset L\setminus L_i}}\vert\lk(K,K')\vert\Big)-\vert L_i\vert$.
\end{thm}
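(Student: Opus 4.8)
The plan is to deduce Theorem~\ref{thm:intro-const} as a special case of the filtration result Theorem~\ref{thm:main} combined with the explicit computation of the renormalized Alexander polynomial in Proposition~\ref{prop:Hosokawa}. By Theorem~\ref{thm:main}, the extended signature $\sigma_L$ is constant on the connected components of $\Sigma_r\setminus\Sigma_{r+1}$ for each $r\ge0$, where $\Sigma_1$ is the support of (a suitable torsion part of) the module $H_1(M_L;\Lambda)$. The key observation is that the \emph{top-dimensional} stratum $\Sigma_0\setminus\Sigma_1$ is precisely the complement in the pointed torus of the zero-locus $V(\widetilde\Delta_L)$, because $\widetilde\Delta_L$ is by definition the gcd of the first elementary ideal of $H_1(M_L;\Lambda)$, hence cuts out the codimension-one part of $\Sigma_1$. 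So on $\Sigma_0\setminus V(\widetilde\Delta_L)$ the signature is locally constant by the $r=0$ case of Theorem~\ref{thm:main}. The remaining strata $\Sigma_1,\Sigma_2,\dots$ all lie inside $V(\widetilde\Delta_L)$ (they are cut out by higher elementary ideals, whose varieties are contained in $V(\widetilde\Delta_L)$), so they contribute nothing to the complement of the zero locus, and the theorem follows.

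Concretely, I would argue as follows. First I would invoke Proposition~\ref{prop:Hosokawa} to identify $\widetilde\Delta_L$ explicitly: in the $\mu=1$ case it gives $\widetilde\Delta_L(t)=\Delta_L(t)/(t-1)^{|L|-1}$, which is (up to units) the Hosokawa polynomial, and for $\mu>1$ it gives the stated product formula with the exponents $\nu_i=\bigl(\sum_{K\subset L_i,\,K'\subset L\setminus L_i}|\lk(K,K')|\bigr)-|L_i|$. Second, I would note that a connected component $C$ of the complement of $V(\widetilde\Delta_L)$ in $\T^\mu\setminus\{(1,\dots,1)\}$ is contained in $\Sigma_0\setminus\Sigma_1$: indeed $C$ is disjoint from $V(\widetilde\Delta_L)$, and $\Sigma_1\setminus\Sigma_2$ together with all lower strata are contained in $V(\widetilde\Delta_L)$, since the $r$-th elementary ideal has its vanishing locus inside that of the gcd of the first elementary ideal for $r\ge1$ (here one uses that the elementary ideals form an ascending chain, so $V(E_r)\subset V(E_1)$, and $V(E_1)=V(\widetilde\Delta_L)$ up to lower-dimensional pieces which are themselves in $\Sigma_1$). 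Third, the $r=0$ case of Theorem~\ref{thm:main} says $\sigma_L$ is constant on each connected component of $\Sigma_0\setminus\Sigma_1$, hence in particular on $C$.

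The one point that requires care — and the place I would expect the main subtlety — is the comparison between $V(\widetilde\Delta_L)$ and $\Sigma_1$: these need not be equal as sets, only to agree in codimension one, because $\widetilde\Delta_L$ is only the gcd of the \emph{first} elementary ideal, whereas $\Sigma_1$ is the full support of the torsion module and can contain components of higher codimension coming from the second and higher elementary ideals. What one really needs is that $\T^\mu\setminus(\{(1,\dots,1)\}\cup V(\widetilde\Delta_L))$ is contained in $\Sigma_0\setminus\Sigma_1$, which amounts to $\Sigma_1\subseteq V(\widetilde\Delta_L)\cup\{(1,\dots,1)\}$. This holds because $\Sigma_1$ is the vanishing locus of the order ideal of $H_1(M_L;\Lambda)$ and, over the factorial ring $\Lambda$, the radical of the order ideal contains $\widetilde\Delta_L$; away from the bad point the support of the torsion module is therefore contained in $V(\widetilde\Delta_L)$. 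Once this inclusion is in place, the argument is immediate: the complement of the Hosokawa zero-locus sits inside the top stratum, on which $\sigma_L$ is locally constant by Theorem~\ref{thm:main}. Finally, I would record the two cases $\mu=1$ and $\mu>1$ separately as Corollaries~\ref{cor:cont1} and~\ref{cor:cont2}, substituting the explicit formula for $\widetilde\Delta_L$ from Proposition~\ref{prop:Hosokawa} in each.
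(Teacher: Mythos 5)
Your overall route is exactly the paper's: combine Theorem~\ref{thm:main} with Proposition~\ref{prop:Hosokawa} and record the two cases as Corollaries~\ref{cor:cont1} and~\ref{cor:cont2}. The problem lies in the lemma you interpose to compare $\Sigma_1(L)$ with the zero set of $\widetilde\Delta_L$ in the pointed torus. What you need is the inclusion $\Sigma_1(L)\subseteq\{\widetilde\Delta_L=0\}$ away from $(1,\dots,1)$, and you justify it by asserting that, over the factorial ring $\Lambda$, the gcd $\widetilde\Delta_L$ of the first elementary ideal lies in the radical of that ideal. That is false as a general algebraic statement when $\mu\ge 2$: the ideal $(t_1+1,t_2+1)\subset\Lambda$ (which is $\mathcal{E}_1$ of the module $\Lambda/(t_1+1,t_2+1)$) has gcd $1$, yet its common zero set on the pointed torus is the point $(-1,-1)$, which is not contained in the empty zero set of $1$. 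Your earlier remark that the lower-dimensional pieces of the vanishing locus of $\mathcal{E}_1$ ``are themselves in $\Sigma_1$'' is circular: of course they are, the issue is whether they lie in the zero set of $\widetilde\Delta_L$. The only containment that is formal is the opposite one, $\{\widetilde\Delta_L=0\}\subseteq\Sigma_1(L)$ (since $\widetilde\Delta_L$ divides every element of $\mathcal{E}_1(L)$), and that goes the wrong way: the complement of the zero set of $\widetilde\Delta_L$ is then \emph{larger} than the top stratum $\Sigma_0\setminus\Sigma_1$, so constancy on components of $\Sigma_0\setminus\Sigma_1$ does not by itself give constancy on components of the complement of $\{\widetilde\Delta_L=0\}$: such a component could a priori meet $\Sigma_1(L)$ in a higher-codimension piece, where the nullity jumps and the value of the signature could change.

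For comparison, the paper states the corollaries as direct consequences of Theorem~\ref{thm:main} and Proposition~\ref{prop:Hosokawa}, with no intermediate lemma. To the extent that the identification of the relevant stratum with the zero locus of $\widetilde\Delta_L$ needs justification, the correct source is not a gcd/radical fact about arbitrary modules but the structure exhibited in the proof of Theorem~\ref{thm:main}: over the localized ring $\LambdaC_j$ the module $H_1(M_{L^\#};\LambdaC_j)$ admits a \emph{square} presentation matrix $A_j$, so its first elementary ideal is principal, generated by $\det A_j$, and this generator agrees with the Hosokawa polynomial up to units of $\LambdaC_j$; hence on $\{\omega_j\neq 1\}$ the stratum in question is precisely the zero set of the Hosokawa polynomial (at least when $\mu_L=0$, so that $L^\#=L$; in general one must also track the shift $\Sigma_r(L^\#)=\Sigma_{r-\vert\mu_L\vert}(L)$ used in that proof). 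So: same approach as the paper, but the specific step you add to close the set-theoretic gap is wrong as stated, and should be replaced by a principality-after-localization (square presentation) argument rather than the claim that $\widetilde\Delta_L$ lies in the radical of the order ideal.
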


Our third result will not come as a surprise (see Definition~\ref{def:concordance} and Theorem~\ref{thm:conc}).

\begin{thm}
\label{thm:intro-inv}
If~$L$ and~$L'$ are topologically concordant~$\mu$-colored links, then~$\sigma_L(\omega)=\sigma_{L'}(\omega)$ and~$\eta_L(\omega)=\eta_{L'}(\omega)$ for all~$\omega\in\T^\mu$ not the root of any~$p\in\Z[t_1^{\pm 1},\dots,t_\mu^{\pm 1}]$ with~$p(1,\dots,1)=\pm 1$.
\end{thm}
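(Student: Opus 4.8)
The plan is to reduce the statement to the known concordance invariance of the non-extended multivariable signature and nullity from \cite{CNT}, using the explicit four-manifold construction of \cite{CMP23} recalled in Section~\ref{sub:ext}. Recall that a topological concordance between $\mu$-colored links $L$ and $L'$ is a collection of disjoint locally flat annuli $\mathcal{A}=A_1\cup\dots\cup A_\mu$ properly embedded in $S^3\times[0,1]$, with $A_i$ colored $i$, realizing $L_i$ at one end and $L'_i$ at the other. The first step is to glue the plumbed $3$-manifolds to $X_L$ and $X_{L'}$ coherently, so that the closed $3$-manifolds $M_L$ and $-M_{L'}$ cobound a compact oriented $4$-manifold $N$ built from the exterior of $\mathcal{A}$ in $S^3\times[0,1]$ (capped off along its toral boundary by the corresponding plumbing construction performed on $\mathcal{A}\times$ an interval, i.e.\ a ``product plumbing''). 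One needs the meridional homomorphisms $\varphi,\varphi'$ on $M_L,M_{L'}$ to extend to a homomorphism $\Psi\colon H_1(N)\to\Z^\mu$; this is where the coloring of the annuli is used, exactly as in the link case.

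The second step is a Novikov additivity / signature-cobordism argument. Form the closed $4$-manifold (or rather, the relevant cobordism) obtained by stacking $W_F$ (with $\partial W_F=M_L$), $N$, and $-W_{F'}$; call the result $V$, with $\partial V=\emptyset$ after the gluings along $M_L$ and $-M_{L'}$ — or, if $L\neq L'$ forces slight alterations, keep careful track of the error terms as in \cite{CMP23}. The twisted signatures add: $\sigma_\omega(V)=\widetilde\sigma_L(\omega)-\widetilde\sigma_{L'}(\omega)+\sigma_\omega(N)$, and similarly the twisted $L^2$-Betti numbers / nullities are controlled. It therefore suffices to show that the twisted signature of the cobordism piece $N$ vanishes and that the twisted first homology of $N$ rel boundary is controlled, for every $\omega$ avoiding the roots of integer Laurent polynomials that are $\pm1$ at $(1,\dots,1)$.

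The third step — and the main obstacle — is precisely this vanishing. For the analogous statement about the non-extended signature in \cite{CNT}, the key input is that the exterior of the concordance annuli is a homology $S^1\times S^1\times\dots$ product with $\Lambda$-coefficients after localizing away from such $\omega$, forcing the intersection form to vanish; the technical engine is Casson--Gordon / Levine-style arguments showing the twisted homology of the concordance exterior is torsion over the relevant PID, so the twisted intersection pairing is trivial and its signature is zero. I expect the work here to be in verifying that capping off by the product plumbing does not spoil this: one must check that the homology of the plumbed caps, with the appropriate twisted coefficients and localized, contributes nothing to the signature and that Wall non-additivity correction terms vanish (the relevant triple intersection of boundary kernels being trivial because everything in sight is $\Lambda_S$-torsion, $S$ the multiplicative set of the relevant polynomials). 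Once $\sigma_\omega(N)=0$ and the nullity contribution is pinned down, rearranging gives $\widetilde\sigma_L(\omega)=\widetilde\sigma_{L'}(\omega)$ and $\widetilde\eta_L(\omega)=\widetilde\eta_{L'}(\omega)$.

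A cleaner alternative, which I would attempt first and fall back on if the direct cobordism bookkeeping with the alterations of $L$ gets unwieldy, is to invoke \cite[Theorem~1.6]{CMP23} (or its proof) expressing the extended invariants in terms of the non-extended ones plus correction terms depending only on the linking matrix of $L$: the linking matrix is a concordance invariant of colored links, and the non-extended $\sigma_L(\omega),\eta_L(\omega)$ are concordance-invariant on the stated dense subset by \cite{CNT}, so the extended ones are too. The only point needing care in this route is matching the domain: the dense set where the comparison formula of \cite{CMP23} is valid must be shown to contain (or be arranged to contain) the set of $\omega$ that are not roots of polynomials $p$ with $p(1,\dots,1)=\pm1$, which should follow since such $\omega$ in particular avoid all roots of $\Delta_L$ and of the renormalizing factors $(t_i-1)$.
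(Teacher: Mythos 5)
Your main route (building a cobordism $N$ from the concordance exterior capped off with $P_L\times[0,1]$, and applying Novikov--Wall) is indeed the paper's strategy for the signature, but the step you yourself identify as the main obstacle is exactly where your sketch fails. You propose that the Wall correction term vanishes ``because everything in sight is $\Lambda_S$-torsion''; this is not a valid reason at the points that matter. For $\omega\in\T^\mu_!$ with some coordinate $\omega_i=1$ (these are in the domain, and they are the whole point of the extension), the twisted homology $H_1(\partial\nu(L);\C^\omega)$ of the splitting surface is genuinely nonzero --- localizing at $(t_i-1)$ kills precisely these contributions, so $\Lambda_S$-torsion arguments say nothing about them --- and the Maslov index is a priori the signature of a nontrivial form. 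The paper has to compute the three Lagrangians explicitly: $\mathcal{L}_\pm$ split as $V_\pm\oplus\phi_*(V_\pm)$ and $\mathcal{L}_0$ is the anti-graph of the isometry $\phi_*$ induced by the concordance on the boundary tori (using the explicit basis $m_K,\ell_K$ for $K\in\mathcal{K}(\omega)$), and then invokes a dedicated algebraic lemma (Lemma~\ref{lem:alg}) to conclude $\mathit{Maslov}(\mathcal{L}_-,\mathcal{L}_0,\mathcal{L}_+)=0$. Without this, the argument is incomplete. Two further issues: your stacking uses ``$W_F$ with $\partial W_F=M_L$'', but in fact $\partial W_F=M_{L^\#}$, so one cannot glue along $M_L$ unless the $\mu_L(ijk)$ vanish; the paper avoids extending the concordance to the auxiliary links by passing through the $\rho$-invariant identity $\sigma_L(\omega)=\sigma(W_F)-\rho(M_L,\chi_\omega\circ\varphi)$ and the fact that $\sigma(W_F)$ depends only on linking numbers and colors. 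And the nullity statement is not addressed by your cobordism bookkeeping at all; the paper proves it by a separate argument, via $\eta_L(\omega)=\dim H_1(M_L;\C^\omega)$ (up to a correction), Mayer--Vietoris for $M_L=X_L\cup P_L$, and the vanishing $H_*(X_C,X_L;\C^\omega)=0$ for $\omega\in\T^\mu_!$ from~\cite[Lemma~2.16]{CNT}.

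Your proposed ``cleaner alternative'' does not work. There is no formula expressing the extended signature at points with some $\omega_i=1$ as the non-extended signature plus corrections depending only on the linking matrix: \cite[Theorem~1.6]{CMP23} describes limits of the non-extended signature and holds only under non-degeneracy hypotheses on the Alexander polynomial, and in general the extended signature on the locus $\{\omega_i=1\}$ carries strictly more information (in Section~\ref{sec:ex}, $\sigma_{L(n)}(1,\omega)$ is governed by the slope, while all non-extended signatures of $L(n)$ vanish and the linking matrix is trivial). Also, membership in $\T^\mu_!$ does not prevent $\omega$ from being a root of $\Delta_L$, since $\Delta_L(1,\dots,1)$ need not be $\pm1$ for links, so the domain-matching step of that fallback is unfounded as well.
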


Finally, to assess the power of these extended invariants, this result is applied to an explicit infinite family of examples. For any~$n\in\mathbb{N}$, consider the~$3$-colored link~$L(n)$ illustrated in Figure~\ref{fig:brunnian_links}. Using extended signatures, we show that for all~$n\neq 0$, the link~$L(n)$ is not concordant to its mirror image. Moreover,
this fact cannot be proved using the following standard obstructions: non-extended signatures, multivariable Alexander polynomials~\cite{Kaw78}, Blanchfield forms over the localised ring~$\Lambda_S$~\cite{Hil81}, linking numbers and Milnor triple linking numbers~\cite{Mil54,Sta65,Cas75}. However, it should be noted that this fact can also be detected by the Milnor number~$\mu(1123)$.

\begin{figure}
	\centering
	\begin{overpic}[width=.8\textwidth]{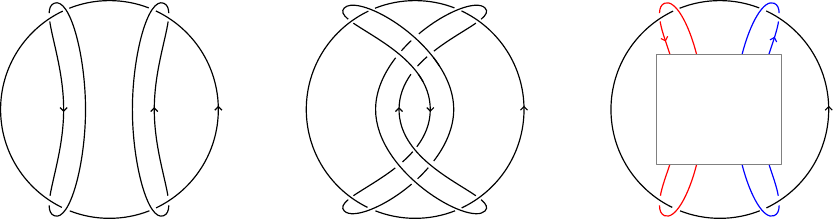}
		\put (85.5,12.5) {$n$}
		\put (64,12.5) {$K_1$}
		\put (59,25) {$K_3$}
		\put (36,25) {$K_2$}
		\put (101,12.5) {$K_1$}
		\put (95,25) {\color{blue}$K_3$}
		\put (74,25) {\color{red}$K_2$}
	\end{overpic}
	\caption{The links $L(n)$ for $n=0$ (on the left), and $n=1$ (in the middle). In the general case (on the right), the two bands corresponding to $K_2$ and $K_3$ twist $n$ times inside the grey box.}
	\label{fig:brunnian_links}
\end{figure}

\bigskip

This article is organised as follows. Section~\ref{sec:background} recalls the (rather substantial) background material.
Section~\ref{sec:Hosokawa} deals with the determination of the multivariable Hosokawa polynomial associated to~$M_L$. Section~\ref{sec:cont} contains the statement of the piecewise-continuity of~$\sigma_L$ and~$\eta_L$
along strata of~$\T^\mu$, together with the corollaries stated as Theorem~\ref{thm:intro-const} above. In Section~\ref{sec:concordance},
we prove Theorem~\ref{thm:intro-inv}, i.e. that the extended signature and nullity are invariant under topological concordance.
Finally, Section~\ref{sec:ex} deals with the aforementioned application to the links~$L(n)$ in Figure~\ref{fig:brunnian_links}.
	
	\subsection*{Acknowledgments}
The authors thank Anthony Conway and Jean-Baptiste Meilhan for useful discussions.
DC and LF are supported by the Swiss NSF grant 200021-212085.

	\section{Background on extended signatures and nullities}
	\label{sec:background}
	
The aim of this first section is to recall the necessary background material on extended signatures and nullities.
It contains no original result, but does gather several easy lemmas that will be needed in this work.
More precisely, we start in Section~\ref{sub:twisted} with the definition of twisted (co)homology and twisted
intersection forms. Section~\ref{sub:ML} deals with a closed~3-manifold~$M_L$ associated to an arbitrary colored
link~$L$. This manifold is necessary for the definition of the extended signature and nullity of~$L$,
which is stated in Section~\ref{sub:ext} following~\cite{CMP23}. Finally, Section~\ref{sub:NW} recalls the Novikov-Wall theorem for the
non-additivity of the (twisted) signature. 
	
	\subsection{Twisted coefficients}
	\label{sub:twisted}
	
	In this subsection, we briefly recall the definition of (co)homology with twisted coefficients and of the corresponding intersection form, referring to~\cite[Part~XXIX]{Friedl} for details.
	
\medskip

Let~$G$ be a group and~$M$ be a left~$\Z[G]$-module. We denote by~$\overline{M}$ the right~$\Z[G]$-module
with the same underlying abelian group as~$M$, but with the action of~$\Z[G]$ induced by~$m\cdot g:=g^{-1}\cdot m$
for all~$g\in G$ and~$m\in M$.

Fix a finite connected pointed CW-complex~$(X,x_0)$ with a (possibly empty) subcomplex~$Y\subset X$,
and let $p\colon\widetilde{X}\to X$ be the universal cover of~$X$.
Recall that the action of~$\pi:=\pi_1(X,x_0)$ equips the cellular chain complex~$C_*(\widetilde{X},p^{-1}(Y))$ with the
structure of a left~$\Z[\pi]$-module.
Fix a ring~$R$ and a homomorphism $\phi\colon\Z[\pi]\to R$.
The map~$\phi$ endows~$R$ with compatible left module structures over the rings~$\Z[\pi]$ and~$R$ (a so-called~$(R,\Z[\pi])$-left left module structure~\cite{Friedl}), which we denote by~$M$.
The associated module~$\overline{M}$ is then an~$(R,\Z[\pi])$-bimodule, and we can consider the chain and cochain complexes of left~$R$-modules
\begin{align*}
C_*(X,Y;M)&=\overline{M}\otimes_{\Z[\pi]}C_*(\widetilde{X},p^{-1}(Y))\,,\\
C^*(X,Y;M)&:=\operatorname{Hom}_{\mathrm{left-}\Z[\pi]}(C_*(\widetilde{X},p^{-1}(Y)),M)\,.
\end{align*}
The homology (resp. cohomology) of the above chain (resp. cochain)
complex is called the {\em twisted homology\/} (resp. {\em twisted cohomology\/}) of~$(X,Y)$,
and is denoted by~$H_*(X,Y;M)$ (resp.~$H^*(X,Y;M)$). Note that these groups are left~$R$-modules.

In the present work, we only use rather specific examples of such twisted coefficients, that we now describe using the
notations introduced above.

\begin{exs}
\label{exs:twisted}
\begin{enumerate}
\item\label{exs:twisted-1} Assume that the CW-complex~$X$ is endowed with a group homomorphism
\[
\pi=\pi_1(X,x_0)\stackrel{\varphi}{\longrightarrow}\Z^\mu=\left<t_1,\dots,t_\mu\right>\,.
\]
This induces a ring homomorphism~$\phi\colon\Z[\pi]\to\Lambda$, where
\[
\Lambda:=\Z[\Z^\mu]=\Z[t_1^{\pm 1},\dots,t_\mu^{\pm 1}]\,.
\]
Using the same symbol~$\Lambda$ for the ring~$R$ and for the module~$M$, we
can consider the twisted homology groups~$H_*(X;\Lambda)$, which are~$\Lambda$-modules.

Assuming that~$\varphi\colon\pi\to\Z^\mu$ is onto, it defines a regular~$\Z^\mu$ cover~$\widetilde{X}^\varphi\to X$,
and one can check that there are natural isomorphisms of~$\Lambda$-modules between the
twisted homology~$H_*(X;\Lambda)$ and the untwisted homology~$H_*(\widetilde{X}_\varphi;\Z)$ (see~\cite[Proposition~239.2]{Friedl}).

\item\label{item:Comega} Assume once again that~$X$ admits a homomorphism~$\varphi\colon\pi\to\Z^\mu$, and fix~$\omega=(\omega_1,\dots,\omega_\mu)\in\T^\mu\coloneq(S^1)^\mu$. Set~$R=\C$ and consider the ring homomorphism~$\phi\colon\Z[\pi]\to\C$ given by the
composition
\[
\Z[\pi]{\longrightarrow}\Lambda{\longrightarrow}\C\,,
\]
where the first map is induced by~$\varphi$ and the second one maps~$t_i$ to~$\omega_i$. Writing~$\C^\omega$ for the resulting module~$M$, this yields twisted (co)homology groups~$H_*(X,Y;\C^\omega)$ and~$H^*(X,Y;\C^\omega)$, which are complex vector spaces. Note that this notation can be slightly misleading, as these vector spaces depend on~$\omega$, but also on~$\varphi$.

If~$\omega=(1,\dots,1)$, then the chain complex of~$\C$-vector spaces~$C_*(X,Y;\C^\omega)$
is naturally isomorphic to the (untwisted) chain complex~$C_*(X,Y;\C)$, yielding~$H_*(X,Y;\C^\omega)\simeq H_*(X,Y;\C)$ in that case. Similarly, we have natural isomorphisms~$H^*(X,Y;\C^\omega)\simeq H^*(X,Y;\C)$.
\end{enumerate}
\end{exs}

We now come to the twisted intersection form, focusing on the setting of Example~\ref{exs:twisted}\ref{item:Comega} above, and 
referring to~\cite[Chapter~244]{Friedl} for more details and full generality.

Assume that~$X$ is a compact oriented $4$-manifold, possibly with boundary, endowed with a group homomorphism~$\varphi\colon\pi\to\Z^\mu$, and fix~$\omega=(\omega_1,\dots,\omega_\mu)\in\T^\mu$. Consider the composition
\begin{equation}
	\label{eq:Q}
	H_2(X;\C^\omega)\longrightarrow H_2(X,\partial X;\C^\omega)\stackrel{\mathrm{PD}}{\longrightarrow} H^2(X;\C^\omega)\stackrel{\mathrm{ev}}{\longrightarrow} \operatorname{Hom}_\C(H_2(X;\C^\omega),\C)\,,
\end{equation}
where the first map is induced by the inclusion~$(X,\emptyset)\subset(X,\partial X)$, the second is the twisted Poincaré duality isomorphism, and the last one is an evaluation map which in the present case is also an
isomorphism (see e.g.~\cite[Proposition~2.3]{CNT}). This is the adjoint map of a Hermitian form
\[
Q_X^{\omega}\colon H_2(X;\C^\omega)\times H_2(X;\C^\omega)\longrightarrow\C
\]
which is called the~$\C^\omega${\em-twisted intersection form\/} on~$X$. One can then write
\[
\sigma_\omega(X):=\sign(Q_X^\omega)\quad\text{and}\quad\eta_\omega(X):=\operatorname{null}(Q_X^\omega)
\]
for the signature and nullity of the~$\C^\omega$-twisted intersection form on~$X$.

\begin{rems}
	\label{rems:int}
	\begin{enumerate}
		\item\label{rems:int-1} If~$\omega=(1,\dots,1)$, then
		we obtain the untwisted signature and nullity of~$X$, denoted by~$\sigma(X)$ and~$\eta(X)$: this follows from
		the discussion in Example~\ref{exs:twisted}\ref{item:Comega}.
		\item\label{rems:int-2} If~$-X$ denotes the manifold~$X$ endowed with the opposite orientation, then we have
		the equality~$\sigma_\omega(-X)=-\sigma_\omega(X)$ for all~$\omega\in\mathbb{T}^\mu$: this follows from the definition of the Poincaré duality isomorphism.
		\item\label{rems:int-3} If the~4-manifold is such that the inclusion induced homomorphism~$H_2(\partial X;\C^\omega)\to H_2(X;\C^\omega)$ is onto, then~$Q^\omega_X$ vanishes identically: this follows from the definition of~$Q^\omega_X$ via~\eqref{eq:Q}. In particular, we then have~$\sigma_\omega(X)=0$.
	\end{enumerate}
\end{rems}

	\subsection{The generalized Seifert surgery}
	\label{sub:ML}
	
	Given a~$\mu$-colored link~$L=L_1\cup\dots\cup L_\mu$, one can consider its exterior~$X_L=S^3\setminus\nu(L)$ equipped with
	the natural homomorphism~$\varphi_X\colon\pi_1(X_L)\to\Z^\mu$ induced by the coloring. The aim of this paragraph is to recall the construction of an oriented $3$-manifold~$P_L$, which only depends on~$L$ and can
	be glued to~$X_L$ so that the resulting oriented {\em closed\/} manifold~$M_L=X_L\cup_\partial -P_L$ admits a homomorphism~$\varphi\colon\pi_1(M_L)\to\Z^\mu$ extending~$\varphi_X$.
	This is the mild variation on~\cite[Construction~4.17]{Tof20} presented in~\cite{CMP23}, see also~\cite{CNT}.
	
	\medskip
	
	Given a~$\mu$-colored link~$L$, 
	consider the decorated graph~$\Gamma_L$ defined as follows:
	\begin{itemize}
	\item 
	The vertices of~$\Gamma_L$ are indexed by the components of~$L$, and each vertex~$K$ is decorated with an oriented closed disc~$D_K$.
	\item 
	Given any two components~$K,K'$ of different colors, the corresponding vertices are linked by~$\vert\lk(K,K')\vert$ edges, and every such edge~$e$ is decorated with the sign~$\varepsilon(e)=\sgn(\lk(K,K'))$.
	\end{itemize}

 The~3-manifold~$P_L$ is the so-called {\em plumbed manifold\/} constructed from~$\Gamma_L$ in the following standard manner. For any component~$K\subset L$, let~$D_K^\circ$ be obtained from~$D_K$ by removing disjoint open~$2$-discs~$D_{K,e}$ indexed by edges~$e$ adjacent to~$K$. Set
 \[
 P_L=\bigsqcup_{K\subset L}D_K^\circ\times S^1/\sim\,,
 \]
 where~$S^1$ denotes an oriented circle and for each edge~$e$, we have performed the gluing
 \begin{equation}
  \label{eq:glue}
  \begin{aligned}
 (-\partial D_{K,e})\times S^1 &\longleftrightarrow (-\partial D_{K',e})\times S^1\\
 (x,y)&\longleftrightarrow (y^{-\varepsilon(e)},x^{-\varepsilon(e)})
 \end{aligned}
 \end{equation}
with~$K$ and~$K'$ the two vertices linked by~$e$. Note that the orientations of~$D_K$ and~$S^1$
induce an orientation of~$D_K^\circ\times S^1$. Since the identifications~\eqref{eq:glue} make use of orientation
reversing homeomorphisms, these orientations extend to an orientation of~$P_L$.

The resulting oriented compact $3$-manifold~$P_L$ has boundary~$\partial P_L=\bigsqcup_{K\subset L} \partial D_K\times S^1$. Therefore, it is possible to glue~$P_L$ and~$X_L$ along their boundaries, and we do so as follows. For each component~$K\subset L$, a {\em meridian\/} of~$K$ is an oriented simple closed curve~$m_K\subset \partial\nu(K)$ such that~$[m_K]=0\in H_1(\nu(K))$ and~$\lk(m_K,K)=1$.
Also, a {\em Seifert longitude\/} of~$K\subset L_i$ is an oriented simple closed curve~$\ell_K\subset \partial\nu(K)$ such that~$[\ell_K]=[K]\in H_1(\nu(K))$ and
\begin{equation}
\label{eq:Seifert}
\lk(\ell_K,L_i):=\sum_{K'\subset L_i}\lk(\ell_K,K')=0\,.
\end{equation}
We glue~$X_L$ and~$P_L$ along their boundary via the homeomorphism~$\partial D_K\times S^1\simeq \partial\nu(K)$ obtained by mapping~$\ast_K\times S^1$ (for some~$\ast_K\in\partial D_K$) to a meridian~$m_K$ and~$\partial D_K\times\ast$ (for some~$\ast\in S^1$) to a Seifert longitude~$\ell_K$.
Since the orientations on~$X_L$ and on~$P_L$ induce the same orientation on the boundary tori, we reverse the orientation of~$P_L$ and define
\[
M_L:=X_L\cup_\partial -P_L\,.
\]
This is an oriented closed~3-manifold called the {\em generalized Seifert surgery\/} on the colored link~$L$.

The main point of this construction is the following fact, see~\cite[Lemma~2.11]{CMP23}:
the homomorphism~$\varphi_X\colon\pi_1(X_L)\to\Z^\mu$ defined by~$\varphi_X([\gamma])=\left(\lk(\gamma,L_i)\right)_i$ extends to~$\varphi\colon\pi_1(M_L)\to\Z^\mu$ such that~$\varphi([\ast_i\times S^1])=t_i\in\Z^\mu$ for any~$\ast_i\in D_K$ with~$K\subset L_i$.
Note however that this extension is in general not unique.
We shall call such a map~$\varphi\colon\pi_1(M_L)\to\Z^\mu$
a {\em meridional\/} homomorphism.

\begin{exs}
\label{exs:ML}
\begin{enumerate}
\item\label{exs:ML-1} If~$L$ is a~$\mu$-component~$\mu$-colored link with all linking numbers
vanishing (this includes the case of knots), then~$M_L$ is the~$0$-surgery on~$L$, and~$\varphi\colon H_1(M_L)\to\Z^\mu$ the unique extension of the isomorphism~$H_1(X_L)\simeq\Z^\mu$.
For example, if~$L$ is the $3$-colored Borromean rings~$B$, then~$M_B$ is the $3$-dimensional torus and~$\varphi\colon H_1(S^1\times S^1\times S^1)\to\Z^3$ the isomorphism determined by the orientations and colors of the components of~$B$.
\item\label{exs:ML-2} If~$L$ is an oriented link (interpreted as a~1-colored link), then~$M_L$ is the {\em Seifert sugery\/}
on~$L$ as defined in~\cite[Definition~5.1]{NP17}. This justifies the terminology.
\end{enumerate}
\end{exs}

	\subsection{Extended signatures  and nullities}
	\label{sub:ext}

We now recall the construction of the extended signature and nullity functions, following~\cite{CMP23}
and referring to it for details.

Let~$L$ be a~$\mu$-colored link, and let~$M_L$ be the associated generalized Seifert surgery defined in Section~\ref{sub:ML}. As recalled above, the natural homomorphism~$H_1(X_L)\to\Z^\mu$ defined by the orientation and colors of the components of~$L$ extends to a (non-canonical) meridional homomorphism~$\varphi\colon H_1(M_L)\to\Z^\mu$, thus defining an element~$(M_L,\varphi)$ in the bordism group~$\Omega_3(\Z^\mu)$. Via
the well-known isomorphism~$\Omega_3(\Z^\mu)\simeq H_3(\mathbb{T}^\mu)=\Z^{\mu\choose 3}$
(see e.g.~\cite[Section~3]{DNOP}), we get a family of integers
\[
\mu_L=\{\mu_L(ijk)\mid 1\le i<j<k\le \mu\}\in\Z^{\mu\choose 3}
\]
which depend on~$L$, but also on the choice of the meridional homomorphism~$\varphi$. For example, if~$B(123)$
denotes the $3$-colored Borromean rings appropriately orientated, then~$\mu_{B(123)}(123)=1$: this follows
from Example~\ref{exs:ML}\ref{exs:ML-1} and the explicit form of the isomorphism~$\Omega_3(\Z^\mu)\simeq\Z^{\mu\choose 3}$.

Now, consider the auxiliary~$\mu$-colored link
\begin{equation}
\label{eq:Lsharp}
L^\#:= L\sqcup \bigsqcup_{i<j<k} -\mu_L(ijk)\cdot B(ijk)\,,
\end{equation}
where~$\sqcup$ denotes the distant sum of links,~$B(ijk)$ is the Borromean rings oriented and colored so that~$\mu_{B(ijk)}(ijk)=1$, and~$n\cdot B(ijk)$ stands for the distant sum of~$\vert n\vert$ copies of~$B(ijk)$ (resp. of~$B(jik)$) if~$n\ge 0$ (resp. if~$n\le 0$). The homomorphism~$\varphi\colon H_1(M_L)\to\Z^\mu$ extends uniquely to~$\varphi^\#\colon H_1(M_{L^\#})\to\Z^\mu$ which by construction satisfies~$(M_{L^\#},\varphi^\#)=0\in\Omega_3(\Z^\mu)$.

Next, consider a bounding surface~$F\subset B_4$ for~$L^\#$, i.e. a collection~$F_1\cup\dots\cup F_\mu$ of
locally flat surfaces properly embedded in~$B^4$ that only intersect each other transversally in double points and such that~$\partial F_i=L^\#_i\subset S^3=\partial B^4$ for all~$i$. For a well-chosen~$F$, its exterior~$V_F:=B^4\setminus\nu(F)$ satisfies~$\pi_1(V_F)\simeq\Z^\mu$.
Moreover, its boundary splits as~$\partial V_F=X_{L^\#}\cup-P_F$, with~$P_F$ a plumbed manifold associated with~$F$. The restriction~$H_1(P_F)\to\Z^\mu$ of the isomorphism~$H_1(V_F)\simeq\Z^\mu$ and the restriction~$H_1(P_{L^\#})\to\Z^\mu$ of~$\varphi^\#$ define a meridional homomorphism~$\psi\colon H_1(P_F\cup-P_{L^\#})\to\Z^\mu$. One of the most technical results of~\cite{CMP23}, namely its Lemma~2.14, asserts that there exists an oriented compact $4$-manifold~$Y_F$ such that~$\partial Y_F=P_F\cup-P_{L^\#}$, and an isomorphism~$\Psi\colon\pi_1(Y_F)\stackrel{\simeq}{\to}\Z^\mu$ which extends~$\psi$,
and such that~$\sigma_\omega(Y_F)$ vanishes for all~$\omega\in\T^\mu$. Hence, one can consider the oriented compact $4$-manifold
\[
W_F=V_F\cup_{P_F} Y_F\,,
\]
which is endowed with an isomorphism~$\Phi\colon \pi_1(W_F)\stackrel{\simeq}{\to}\Z^\mu$. By construction, we
have~$\partial W_F=M_{L^\#}$, and~$\Phi$ extends~$\varphi^\#$. This is illustrated in Figure~\ref{fig:WF}.

\begin{figure}[tbp]
		\centering
		\begin{overpic}[width=3.5cm]{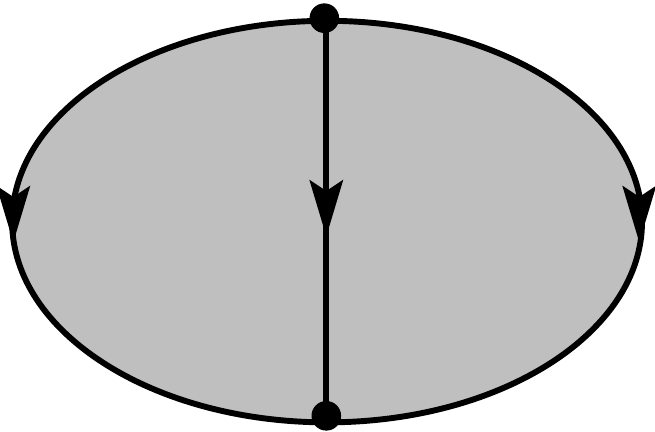}
			\put (-22,30){\scriptsize{$X_{L^\#}$}}
			\put (53,18){\scriptsize$P(F)$}
			\put (103,30){\scriptsize$P_{L^\#}$}
			\put (23,30){$V_F$}
			\put (75,30){$Y_F$}
			\put (40,68){\scriptsize$\partial\nu(L^\#)$}
		\end{overpic}
		\caption{Construction of the manifold~$W_F$.}
		\label{fig:WF}
	\end{figure}
	
	\begin{definition}
	\label{def:sign-null}
	The {\em (extended) signature\/} and {\em nullity\/} of the~$\mu$-colored link~$L$ are the maps
	\[
	\sigma_L,\eta_L\colon\T^\mu\longrightarrow\Z
	\]
	defined
	by~$\sigma_L(\omega)=\sigma_\omega(W_F)$ and~$\eta_L(\omega)=\eta_\omega(W_F)-\sum_{i<j<k}\vert\mu_L(ijk)\vert-2\sum_{\stackrel{i<j<k}{\omega_i=\omega_j=\omega_k=1}}\vert\mu_L(ijk)\vert$.
	\end{definition}
	
	In~\cite[Theorem~4.4]{CMP23}, it is checked that the maps~$\sigma_L$ and~$\eta_L$ are well-defined
	invariants of~$L$.
	Moreover, they extend
	the multivariable signature and nullity previously defined on the open torus~$(S^1\setminus\{1\})^\mu$
	using generalized Seifert surfaces~\cite{CF08}.
	
	\begin{rem}
	\label{rem:rho}
	Since the manifold~$(M_{L^\#},\varphi^\#)$ bounds over~$\Z^\mu$, one could have simply considered the signature defect~$\sigma_\omega(W)-\sigma(W)$, with~$(W,\Phi)$ any oriented compact $4$-manifold with
	boundary~$(M_{L^\#},\varphi^\#)$. This is an invariant, known as the (opposite of the)~{\em$\rho$-invariant\/} of~$M_{L^\#}$, originally defined by Atiyah, Patodi and Singer in~\cite{APS75-I,APS75-II}. Indeed, if~$W$ is any
	oriented compact connected $4$-manifold endowed with a map~$\alpha\colon\pi_1(W)\to S^1$, then
	\begin{equation}
	\label{eq:rho-W}
	\rho(\partial W,\alpha\circ i_*)=\sigma(W)-\sigma_\alpha(W)\,,
	\end{equation}
	with~$i_*\colon\pi_1(\partial W)\to\pi_1(W)$ the inclusion induced homomorphism,
	and~$\sigma_\alpha(W)$ the signature of~$W$ with coefficients twisted by~$\alpha$, see~\cite[Theorem~2.4]{APS75-II}.
	For all~$\omega\in\T^\mu$, standard properties of the $\rho$-invariant then lead to the equality
	\[
	\sigma(W)-\sigma_\omega(W)=\rho(M_{L^\#},\chi_\omega\circ\varphi^\#)=\rho(M_{L},\chi_\omega\circ\varphi)\,,
	\]
	where~$\chi_\omega\colon\Z^\mu\to S^1$ denotes the homomorphism determined by~$t_i\mapsto\omega_i$,
	see the proof of~\cite[Theorem~4.4]{CMP23}.
	The issue is that, in general, this invariant does {\em not\/} extend the usual multivariable signature. This is the reason why we have to consider the auxiliary link~$L^\#$ together with the specific $4$-manifold~$W_F$, and set~$\sigma_L(\omega)=\sigma_\omega(W_F)$. In conclusion, we have the equality
	\begin{equation}
	\label{eq:rho}
	\sigma_L(\omega)=\sigma_\omega(W_F)=\sigma(W_F)-\rho(M_{L},\chi_\omega\circ\varphi)\,.
	\end{equation}
	Note that the difference between these two invariants, namely the untwisted signature~$\sigma(W_F)$, only depends on the
	linking numbers and colors of the components of~$L$ (see the proof of~\cite[Lemma~A.6]{CMP23}).
	\end{rem}
	
	We will need the following easy result, which extends~\cite[Proposition~2.10]{CF08}.
	
	\begin{prop}
	\label{prop:sign-L}
For any~$\mu$-colored link~$L$, the extended signature of  the mirror image~$-L$ of~$L$ satisfies~$\sigma_{-L}=-\sigma_L$.
	\end{prop}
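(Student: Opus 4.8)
The plan is to reduce the statement~$\sigma_{-L}=-\sigma_L$ to the orientation-reversal property of the twisted signature recorded in Remark~\ref{rems:int}\ref{rems:int-2}, together with a careful bookkeeping of how the auxiliary data entering Definition~\ref{def:sign-null} behave under mirroring. The key observation is that mirroring $L$ (reflecting $S^3$) reverses all linking numbers $\lk(K,K')$, hence flips all the signs $\varepsilon(e)$ decorating the graph $\Gamma_L$; inspecting the gluing~\eqref{eq:glue}, one sees that this produces exactly $-P_L$ in place of $P_L$, so that $M_{-L}=X_{-L}\cup_\partial -P_{-L}$ is orientation-reversing homeomorphic to $-M_L$ (the mirror reverses the orientation of $X_L$ too, and the two effects combine consistently with the meridional homomorphism $\varphi$, which is unchanged since meridians are sent to meridians). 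First I would make this identification $(M_{-L},\varphi)\cong(-M_L,\varphi)$ precise, and note that it implies $\mu_{-L}(ijk)=-\mu_L(ijk)$ for all $i<j<k$, since the bordism class in $\Omega_3(\Z^\mu)$ changes sign under orientation reversal.

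Next I would track this through the construction of $L^\#$ and $W_F$. Because $\mu_{-L}(ijk)=-\mu_L(ijk)$, the correction link in~\eqref{eq:Lsharp} for $-L$ is obtained from that for $L$ by replacing each $B(ijk)$ block by $B(jik)=-B(ijk)$ and vice versa; combined with the fact that mirroring a distant sum mirrors each summand, this gives $(-L)^\#=-(L^\#)$ as colored links (up to the usual reordering of Borromean blocks, which is irrelevant). Now choose a bounding surface $F$ in $B^4$ for $L^\#$ with $\pi_1(V_F)\cong\Z^\mu$ as in Section~\ref{sub:ext}; reflecting $B^4$ turns $F$ into a bounding surface $\overline F$ for $(-L)^\#$, with $V_{\overline F}=-V_F$, $P_{\overline F}=-P_F$, and the auxiliary manifold $Y_{\overline F}$ can be taken to be $-Y_F$ (its defining property $\sigma_\omega(Y_F)=0$ for all $\omega$ is preserved under orientation reversal, by Remark~\ref{rems:int}\ref{rems:int-2}). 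Hence $W_{\overline F}=V_{\overline F}\cup_{P_{\overline F}}Y_{\overline F}=-W_F$, compatibly with the isomorphism $\Phi\colon\pi_1(W_F)\xrightarrow{\simeq}\Z^\mu$.

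With these identifications in hand, the conclusion is immediate: for every $\omega\in\T^\mu$,
\[
\sigma_{-L}(\omega)=\sigma_\omega(W_{\overline F})=\sigma_\omega(-W_F)=-\sigma_\omega(W_F)=-\sigma_L(\omega)\,,
\]
using Definition~\ref{def:sign-null} at the two ends and Remark~\ref{rems:int}\ref{rems:int-2} in the middle. Since $\sigma_L$ is a well-defined invariant independent of the choice of $F$ by~\cite[Theorem~4.4]{CMP23}, the particular choice $\overline F$ for $-L$ computes $\sigma_{-L}$, and we are done.

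I expect the main obstacle to be the first step: verifying rigorously that mirroring $L$ has the net effect of reversing the orientation of $M_L$ while leaving the meridional homomorphism intact. One must be careful because $M_L$ is assembled from two pieces each of which has its orientation reversed under mirroring, and the gluing homeomorphism along the tori is specified in terms of meridians and Seifert longitudes~\eqref{eq:Seifert}; one needs to check that the sign change in $\varepsilon(e)$ coming from $\lk(K,K')\mapsto-\lk(K,K')$ exactly matches the orientation reversal of the $S^1$ factors so that~\eqref{eq:glue} is respected. Once this orientation-reversal statement $M_{-L}\cong-M_L$ (rel $\varphi$) is established, everything else is a formal consequence of the sign behavior of $\Omega_3(\Z^\mu)$, of distant sums, and of the twisted signature under orientation reversal, and the bookkeeping through $L^\#$ and $W_F$ is routine. (Alternatively, one could phrase the whole argument through the $\rho$-invariant formulation~\eqref{eq:rho}, using $\rho(-M,\cdot)=-\rho(M,\cdot)$ and that the untwisted signature $\sigma(W_F)$ depends only on linking numbers and colors, hence changes sign under mirroring as well — but the direct approach above seems cleaner.)
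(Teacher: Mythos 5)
Your overall strategy is the same as the paper's (mirroring gives an orientation-reversing identification of $M_{-L}$ with $M_L$, one tracks the construction through $L^\#$ and $W_F$, and concludes with Remark~\ref{rems:int}\ref{rems:int-2}), but the middle step contains a genuine sign error. Mirroring reverses all linking numbers, so the reflection sends a meridian $m_K$ (with $\lk(m_K,K)=1$) to a curve having linking number $-1$ with the mirrored component: meridians go to \emph{inverse} meridians, not to meridians. Hence the meridional homomorphism is not ``unchanged'' under the identification $H_1(M_{-L})\cong H_1(M_L)$; it gets composed with the inversion of $\Z^\mu$. In $\Omega_3(\Z^\mu)\cong H_3(\T^\mu)$ this inversion acts by $(-1)^3=-1$, which cancels the sign coming from the orientation reversal of $M_L$, so in fact $\mu_{-L}(ijk)=+\mu_L(ijk)$, not $-\mu_L(ijk)$. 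Consistently, the mirror of $B(ijk)$ is isotopic to $B(ijk)$ itself, not to $B(jik)$: the Borromean rings are amphicheiral as an ordered oriented link, and the triple Milnor number is mirror-invariant (a fact the paper itself invokes in Section~\ref{sec:ex}); your claim $-B(ijk)=B(jik)$ would force $\mu(ijk)$ to change sign under mirroring. The paper's proof uses precisely these two facts, $\mu_{-L}=\mu_L$ and the amphichirality of the Borromean rings, to obtain $(-L)^\#=-(L^\#)$.

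Fortunately your two sign slips compensate: with $\mu_{-L}=-\mu_L$ \emph{and} $-B(ijk)=B(jik)$ you still land on the correct identity $(-L)^\#=-(L^\#)$, after which your argument (take $-V_F$ and $-Y_F$, hence $-W_F$ with the same $\Phi$, and apply Remark~\ref{rems:int}\ref{rems:int-2}) coincides with the paper's and is fine. So the endpoint is right, but the justification of $(-L)^\#=-(L^\#)$ needs to be repaired as above; the same care with the inversion of $\Z^\mu$ would be needed in the $\rho$-invariant variant you sketch, where the conjugation symmetry $\rho(M,\bar\alpha)=\rho(M,\alpha)$ is what absorbs the inversion. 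A minor wording slip as well: you want ``$M_{-L}$ is orientation-reversingly homeomorphic to $M_L$'', i.e. $M_{-L}=-M_L$, rather than ``to $-M_L$''.
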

	\begin{proof}
	For each~$K\subset L$, consider the orientation reversing homeomorphism~$D_K\times S^1\to D_K\times S^1$ defined by~$(x,y)\mapsto (x,y^{-1})$. Since the graph~$\Gamma_{-L}$ associated to~$-L$ is obtained from~$\Gamma_L$ by reversing the sign~$\varepsilon(e)$ of all edges~$e$, these homeomorphisms are coherent 
	with the gluing~\eqref{eq:glue} and define an orientation reversing homeomorphism~$P_L\to P_{-L}$.
By definition of~$-L$, we also have an orientation reversing homeomorphism~$X_L\to X_{-L}$.
Together, they define an orientation reversing homeomorphism~$M_L\to M_{-L}$, the existence of which can be stated by the equality
\begin{equation}
\label{eq:-ML}
M_{-L}=-M_L\,.
\end{equation}
Chosing the same meridional homomorphism~$\varphi$ on~$H_1(M_{-L})=H_1(M_L)$, and using the fact that the Borromean rings are amphicheiral, we get~$\mu_{-L}=\mu_L$ and~$(-L)^\#=-(L^\#)$. By~\eqref{eq:-ML}, this yields~$M_{(-L)^\#}=-M_{L^\#}$, endowed with the same homomorphism~$\varphi^\#$ on~$H_1(M_{(-L)^\#})=H_1(M_{L^\#})$.
Hence, if~$W_F=V_F\cup Y_F$ endowed with~$\Phi\colon\pi_1(W_F)\stackrel{\simeq}{\to}\Z^\mu$ is an oriented $4$-manifold over~$\Z^\mu$ that can be used to define~$\sigma_L$, then the oriented manifold~$-W_F=-V_F\cup -Y_F$ endowed with the same~$\Phi$ on~$\pi_1(-W_F)=\pi_1(W_F)$ can be used to define~$\sigma_{-L}$. By Remark~\ref{rems:int}\ref{rems:int-2}, we now have
\[
\sigma_{-L}(\omega)=\sigma_\omega(-W_F)=-\sigma_\omega(W_F)=-\sigma_L(\omega)
\]
for all~$\omega\in\T^\mu$.
	\end{proof}
	
	\subsection{The Novikov-Wall theorem}
	\label{sub:NW}
	
	In this final background section, we recall the {\em Novikov-Wall theorem\/}~\cite{Wal} and present an algebraic lemma, both of which are needed in the proof of Theorem~\ref{thm:conc}.
	
	\medskip
	
	Let~$W$ be an oriented compact~4-manifold,
	and let~$X_0$ be an oriented compact~3-manifold properly embedded in~$W$. Assume that~$X_0$ splits~$W$ into two manifolds~$W_-$
	and~$W_+$, with~$W_-$ such that the induced orientation on its boundary restricted to~$X_0\subset\partial W_-$
	coincides with the given orientation of~$X_0$.
	For~$\varepsilon=\pm$, let~$X_\varepsilon$ denote the~3-manifold~$\partial W_\varepsilon\setminus\mathrm{int}(X_0)$ endowed with the orientation so that~$\partial W_-=(-X_-)\cup X_0$ and~$\partial W_+=(-X_0)\cup X_+$, see Figure~\ref{fig:NW}. Note that the orientations of~$X_0$,~$X_-$ and~$X_+$ induce the same orientation on the
	surface~$\Sigma:=\partial X_0=\partial X_-=\partial X_+$. We will use this orientation to define the (twisted) intersection form on this surface.
	
	\begin{figure}[tbp]
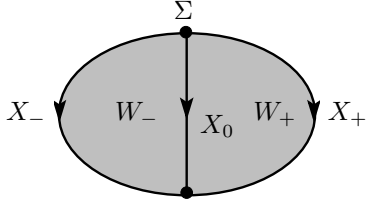

		\centering
		\begin{overpic}[width=3.5cm]{WF}
			\put (-18,30){$X_-$}
			\put (55,25){$X_0$}
			\put (103,30){$X_+$}
			\put (23,30){$W_-$}
			\put (75,30){$W_+$}
			\put (45,68){$\Sigma$}
		\end{overpic}
		\caption{The decomposition~$W=W_-\cup_{X_0} W_+$ in the Novikov-Wall theorem.}
		\label{fig:NW}
	\end{figure}
	
	Assume that~$W$ is endowed with a homomorphism~$\psi\colon\pi_1(W)\to\Z^\mu$.
	As described in Example~\ref{exs:twisted}\ref{item:Comega}, any~$\omega\in\mathbb{T}^\mu$ induces twisted coefficients~$\mathbb{C}^\omega$ on the homology of~$W$.
	Precomposing~$\psi$ with inclusion induced homomorphisms,
	we also get twisted coefficients on the homology of submanifolds of~$W$, coefficients that we denote by~$\mathbb{C}^\omega$ as well. Note however that these submanifolds need not be connected, so one needs to be careful
	of the meaning of these twisted homology spaces; in the terminology of~\cite[Chapter~240]{Friedl}, these
	are so-called {\em internal twisted homology\/} spaces. The point is that the long exact sequence of the pair~$(X_\varepsilon,\Sigma)$ holds for any~$\varepsilon\in\{-,0,+\}$. Using this exact sequence together with Poincar\'e-Lefschetz duality~\cite[Theorem~243.1]{Friedl} and the Universal Coefficient Theorem in the form of~\cite[Proposition~2.3]{CNT}, one easily checks that for any~$\varepsilon\in\{-,0,+\}$, the
	kernel~$\mathcal{L}_\varepsilon$
	of the inclusion induced homomorphism~$H_1(\Sigma;\mathbb{C}^\omega)\to H_1(X_\varepsilon;\mathbb{C}^\omega)$ is a Lagrangian subspace of~$(H_1(\Sigma;\mathbb{C}^\omega),\,\cdot\,)$, where~$(a,b)\mapsto a\cdot b$ denotes the (non-degenerate and skew-Hermitian) twisted intersection form on~$H_1(\Sigma;\mathbb{C}^\omega)$.
	
	Given three Lagrangian subspaces~$\mathcal{L}_-,\mathcal{L}_0,\mathcal{L}_+$ of a finite-dimensional complex vector space~$H$
	endowed with a non-degenerate skew-Hermitian form~$(a,b)\mapsto a\cdot b$, the associated {\em Maslov index} is the integer
	\[
	\mathit{Maslov}(\mathcal{L}_-,\mathcal{L}_0,\mathcal{L}_+)=\sign(f)\,,
	\]
	where~$f$ is the following Hermitian form on~$(\mathcal{L}_-+\mathcal{L}_0)\cap\mathcal{L}_+$. 
	Given~$a,b\in(\mathcal{L}_-+\mathcal{L}_0)\cap\mathcal{L}_+$,
	write~$a=a_-+a_0$ with~$a_-\in\mathcal{L}_-$ and~$a_0\in\mathcal{L}_0$ and set~$f(a,b):=a_0\cdot b$.
	
	The Novikov-Wall theorem states that, in the setting above and for any~$\omega\in\mathbb{T}^\mu$, we have the equality
	\begin{equation}
		\label{eq:NW}
		\sign_\omega(W)=\sign_\omega(W_-)+\sign_\omega(W_+)+\mathit{Maslov}(\mathcal{L}_-,\mathcal{L}_0,\mathcal{L}_+)\,.
	\end{equation}
	Note that this theorem was originally stated and proved by Wall~\cite{Wal} in the untwisted case, but the proof readily extends. Note also that the above version follows the conventions of~\cite[Chapter~IV.3]{Tur}, which slightly differ from the original ones.
	
	\medskip
	
	In addition to the Novikov-Wall theorem, we shall also need the following algebraic lemma.
	
	\begin{lem}
		\label{lem:alg}
		Let~$\phi\colon(H,\,\cdot\,)\to(H',\,\cdot'\,)$ be an isometry between vector spaces endowed with non-degenerate skew-Hermitian forms,
		and let~$V_-,V_+\subset H$ be Lagrangians. Then, the three subspaces of~$H\oplus H'$ given by
		\[
		\mathcal{L}_-:=V_-\oplus\phi(V_-)\,,\quad \mathcal{L}_0:=\{(x,-\phi(x))\mid x\in H\}\,,\quad\text{and}\quad \mathcal{L}_+:=V_+\oplus\phi(V_+)
		\]
		are Lagrangians with respect to the non-degenerate skew-Hermitian form~$(x,x')\bullet (y,y')=x\cdot y-x'\cdot'y'$ on~$H\oplus H'$.
		Furthermore, they satisfy~$\mathit{Maslov}(\mathcal{L}_-,\mathcal{L}_0,\mathcal{L}_+)=0$.
	\end{lem}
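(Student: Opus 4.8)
The plan is to verify the three Lagrangian assertions by direct computation, and then to show the Maslov index vanishes by exhibiting that $(\mathcal{L}_-+\mathcal{L}_0)\cap\mathcal{L}_+$ is entirely \emph{isotropic} for the form $f$, which forces $\mathrm{sign}(f)=0$. First I would check that each of the three subspaces is isotropic and of half-dimension. For $\mathcal{L}_0$, given $(x,-\phi(x))$ and $(y,-\phi(y))$ we compute $(x,-\phi(x))\bullet(y,-\phi(y)) = x\cdot y - \phi(x)\cdot'\phi(y) = x\cdot y - x\cdot y = 0$, using that $\phi$ is an isometry; and $\dim\mathcal{L}_0 = \dim H = \tfrac12\dim(H\oplus H')$. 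For $\mathcal{L}_\pm = V_\pm\oplus\phi(V_\pm)$, given $(v,\phi(w))$ and $(v',\phi(w'))$ with $v,v',w,w'\in V_\pm$, the pairing is $v\cdot v' - \phi(w)\cdot'\phi(w') = v\cdot v' - w\cdot w' = 0$ since $V_\pm$ is Lagrangian in $(H,\cdot)$ and $\phi$ is an isometry; and $\dim\mathcal{L}_\pm = 2\dim V_\pm = \dim H = \tfrac12\dim(H\oplus H')$. That $\bullet$ is non-degenerate and skew-Hermitian on $H\oplus H'$ is immediate from the corresponding properties of $\cdot$ and $\cdot'$.

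Next I would compute the space $(\mathcal{L}_-+\mathcal{L}_0)\cap\mathcal{L}_+$ on which the Maslov form $f$ lives. An element of $\mathcal{L}_-+\mathcal{L}_0$ has the form $(v_- + x,\ \phi(w_-) - \phi(x))$ with $v_-,w_-\in V_-$ and $x\in H$; since $x$ is arbitrary and $\phi$ is an isomorphism, the second coordinate is arbitrary once the first is fixed, so in fact $\mathcal{L}_-+\mathcal{L}_0 = H\oplus H'$ (both summands being Lagrangian and intersecting trivially, as $V_-\cap\{0\}=0$ forces $x\in V_-$ then $w_- - x \in V_-$... more simply, a dimension count gives $\dim(\mathcal{L}_-+\mathcal{L}_0) = 2\dim H$). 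Hence $(\mathcal{L}_-+\mathcal{L}_0)\cap\mathcal{L}_+ = \mathcal{L}_+$, and for $a\in\mathcal{L}_+$ we must decompose $a = a_- + a_0$ with $a_-\in\mathcal{L}_-$, $a_0\in\mathcal{L}_0$, and then $f(a,b) = a_0\bullet b$ for $b\in\mathcal{L}_+$.

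The key observation is then that $f$ vanishes identically on $\mathcal{L}_+$, because $b\in\mathcal{L}_+$ and $a_0\in\mathcal{L}_0$ while $\mathcal{L}_+$ and $\mathcal{L}_0$ are \emph{both} Lagrangian, so $a_0\bullet b = 0$ whenever $a_0$ happens to lie in $\mathcal{L}_+$ — but that is not automatic, so the honest argument is: write $a = a_-+a_0$; then $a_0 = a - a_-$, so $f(a,b) = a_0\bullet b = a\bullet b - a_-\bullet b = 0 - 0 = 0$, using that $a,b\in\mathcal{L}_+$ (Lagrangian, so $a\bullet b = 0$) and $a_-\in\mathcal{L}_-$, $b\in\mathcal{L}_+$ with $\mathcal{L}_-$ and $\mathcal{L}_+$ \emph{not} a priori orthogonal. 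So this last step needs care: the pairing $\mathcal{L}_-\bullet\mathcal{L}_+$ need not vanish. Therefore I would instead argue directly that $f$ is the zero form by a cleaner route: the form $f$ on $(\mathcal{L}_-+\mathcal{L}_0)\cap\mathcal{L}_+ = \mathcal{L}_+$ is well known to be well-defined independent of the decomposition precisely because its values lie in the radical directions, and here one checks $f(a,b) = a_0\bullet b$ with $a_0\in\mathcal{L}_0$, $b\in\mathcal{L}_+$; since $\mathcal{L}_0$ is transverse to $\mathcal{L}_+$... Actually the correct and short computation: pick the explicit decomposition. Write $b = (v,\phi(w))$ with $v,w\in V_+$. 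Write $a = (v',\phi(w'))\in\mathcal{L}_+$. We need $a_-\in\mathcal{L}_-$, $a_0\in\mathcal{L}_0$ with $a_-+a_0 = a$; taking $a_0 = (z,-\phi(z))$ and $a_- = (v'-z,\ \phi(w')+\phi(z))\in V_-\oplus\phi(V_-)$ forces $v'-z\in V_-$ and $w'+z\in V_-$, hence $z\in v'+V_- = -w'+V_-$, consistent since $v'+w'\in H$ decomposes modulo $V_-$... and then $f(a,b) = a_0\bullet b = z\cdot v - \phi(z)\cdot'\phi(w) = z\cdot v - z\cdot w = z\cdot(v-w)$. Since $v,w\in V_+$ Lagrangian, $v - w\in V_+$ and $z\cdot(v-w)$ depends on the $V_+$-component behavior of $z$; but $z \equiv v' \pmod{V_-}$, and we may further choose $z$ within its coset so that... this requires $V_- + V_+ = H$ or at least control of $z\cdot V_+$. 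The honest main obstacle is exactly this: showing $z\cdot(v-w)=0$. I expect the resolution is that one is free to modify $z$ by any element of $V_-$ without changing $a_-\in\mathcal{L}_-$ (wait — modifying $z$ changes both $a_0$ and $a_-$ but keeps $a_-\in\mathcal{L}_-$ iff the shift lies in $V_-$), and since $V_-^{\perp} = V_-$, shifting $z$ by $V_-$ changes $z\cdot(v-w)$ by $V_-\cdot(v-w)$; choosing $z$ optimally, or averaging, one gets $0$. Cleanest: the form $f$ is independent of the choice of decomposition (standard fact about the Maslov/Wall form), so $z\cdot(v-w)$ must be independent of the $V_-$-ambiguity in $z$, which means $V_-\cdot(v-w)=0$, i.e.\ $v-w\in V_-^\perp = V_-$. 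Thus $v-w\in V_-\cap V_+$. Then $f(a,b) = z\cdot(v-w)$ with $v-w\in V_-$, and writing $z = v' + (z - v')$ with $z-v'\in V_-$: $z\cdot(v-w) = v'\cdot(v-w) + (z-v')\cdot(v-w)$; the second term is $V_-\cdot V_- = 0$, and for the first, $v'\in V_+$ (as $a\in\mathcal{L}_+$) and $v-w\in V_+$, so $v'\cdot(v-w)\in V_+\cdot V_+ = 0$. Hence $f\equiv 0$ and $\mathit{Maslov}(\mathcal{L}_-,\mathcal{L}_0,\mathcal{L}_+) = \mathrm{sign}(0) = 0$, completing the proof. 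The main obstacle, as indicated, is organizing this last chain of reductions cleanly; everything else is bookkeeping with the definitions of Lagrangian and isometry.
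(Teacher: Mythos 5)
Your verification that $\mathcal{L}_-$, $\mathcal{L}_0$, $\mathcal{L}_+$ are Lagrangian is correct and is the same as the paper's. The Maslov part, however, rests on a false claim: $\mathcal{L}_-$ and $\mathcal{L}_0$ do \emph{not} intersect trivially, since $\mathcal{L}_-\cap\mathcal{L}_0=\{(x,-\phi(x))\mid x\in V_-\}$ (take the two $V_-$-components to be $x$ and $-x$), which has dimension $\dim V_-=\tfrac12\dim H$. Hence $\dim(\mathcal{L}_-+\mathcal{L}_0)=\tfrac32\dim H<\dim(H\oplus H')$, and $(\mathcal{L}_-+\mathcal{L}_0)\cap\mathcal{L}_+$ is in general a \emph{proper} subspace of $\mathcal{L}_+$: an element $(v',\phi(w'))\in\mathcal{L}_+$ decomposes as $a_-+a_0$ exactly when your two coset conditions $z\in v'+V_-$ and $z\in -w'+V_-$ are compatible, i.e. when $v'+w'\in V_-$ (and then automatically $v'+w'\in V_-\cap V_+$) --- this is precisely the constraint you waved away with ``consistent since\dots''. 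The omission matters, because the vanishing of $f(a,b)$ requires the analogous constraint on $b=(v,\phi(w))$, and that constraint comes from $b$ lying in the honest intersection; your appeal to well-definedness of the Wall--Maslov form cannot supply it for an arbitrary $b\in\mathcal{L}_+$, since that form (and the standard proof of its well-definedness) only makes sense for $b$ in the domain $(\mathcal{L}_-+\mathcal{L}_0)\cap\mathcal{L}_+$, where the direct derivation is immediate anyway.

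There is also a sign error in the key computation: with $a_0=(z,-\phi(z))$ and $b=(v,\phi(w))$ one has $a_0\bullet b= z\cdot v-\bigl(-\phi(z)\bigr)\cdot'\phi(w)=z\cdot v+z\cdot w=z\cdot(v+w)$, not $z\cdot(v-w)$; accordingly the condition you need is $v+w\in V_-\cap V_+$, whereas your argument with the wrong sign would ``establish'' $v-w\in V_-$, which is false in general. Once both points are repaired, your computation becomes the paper's proof: for $a=(v',\phi(w'))$ and $b=(v,\phi(w))$ in the intersection, decomposability gives $v'+w',\,v+w\in V_-\cap V_+$, and then
\[
f(a,b)=z\cdot(v+w)=v'\cdot(v+w)+(z-v')\cdot(v+w)=0\,,
\]
because $v',v,w\in V_+$, while $z-v'$ and $v+w$ lie in $V_-$, with $V_\pm$ isotropic. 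So the strategy (show $f\equiv 0$ by explicitly decomposing elements) is the right one and matches the paper, but as written the proof has a genuine gap at the identification of the domain of $f$ and an error of sign in the main computation.
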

	\begin{proof}
		To show that~$\mathcal{L}_-$ is a Lagrangian subspace of~$(H\oplus H',\bullet)$, consider
		elements~$(x_-,\phi(y_-))$ and~$(w_-,\phi(z_-))$ of~$\mathcal{L}_-$, with~$x_-,y_-,w_-,z_-\in V_-$. Since~$\phi$ is
		an isometry and~$V_-$ is an isotropic subspace of~$(H,\,\cdot\,)$, we have
		\[
		(x_-,\phi(y_-))\bullet (w_-,\phi(z_-))=x_-\cdot w_--\phi(y_-)\cdot'\phi(z_-)=x_-\cdot w_--y_-\cdot z_-=0-0=0\,,
		\]
		showing that~$\mathcal{L}_-$ is isotropic. Since~$\dim(\mathcal{L}_-)=2\dim(V_-)=\dim(H)=\frac{1}{2}\dim(H\oplus H')$
		and the form~$\bullet$ is non-degenerate, it follows that~$\mathcal{L}_-$ is Lagrangian. The proof for~$\mathcal{L}_+$ is identical. To check that~$\mathcal{L}_0$ is Lagrangian, fix elements~$(x,-\phi(x))$ and~$(y,-\phi(y))$ of~$\mathcal{L}_0$, with~$x,y\in H$. Since~$\phi$ is an isometry, we have
		\[
		(x,-\phi(x))\bullet (y,-\phi(y))=x\cdot y-\phi(x)\cdot'\phi(y)=0\,,
		\]
		implying that~$\mathcal{L}_0$ is Lagrangian since its dimension is half that of~$H\oplus H'$.
		
		Recall that~$\mathit{Maslov}(\mathcal{L}_-,\mathcal{L}_0,\mathcal{L}_+)$ is the signature of the
		form~$f$ on~$(\mathcal{L}_-+\mathcal{L}_0)\cap\mathcal{L}_+$ defined by~$f(a,b)=a_0\bullet b$ for~$a=a_-+a_0\in(\mathcal{L}_-+\mathcal{L}_0)\cap\mathcal{L}_+$ with~$a_-\in\mathcal{L}_-$ and~$a_0\in\mathcal{L}_0$.
		We will now show that this form~$f$ is identically zero, implying the statement. An element~$a=a_-+a_0$ as above is of the form~$a=(x_+,\phi(y_+))\in\mathcal{L}_+$ with~$x_+,y_+\in V_+$, and there exist~$x_-,y_-\in V_-$ such that~$a_-:=(x_-,\phi(y_-))\in\mathcal{L}_-$ satisfies~$a_0:=a-a_-\in\mathcal{L}_0$. Hence, there exists~$x\in H$ such that
		\[
		a_0=a_+-a_-=(x_+-x_-,\phi(y_+-y_-))=(x,-\phi(x))\,.
		\]
		Therefore, we have~$x=x_+-x_-=y_--y_+$, leading to~$x_-+y_-=x_++y_+\in V_-\cap V_+$.
		Similarly, an element~$b\in(\mathcal{L}_-+\mathcal{L}_0)\cap\mathcal{L}_+$ is given by~$b=(w_+,\phi(z_+))$ with~$w_+,z_+\in V_+$ and~$w_++z_+\in V_-\cap V_+$. Hence, we have
		\begin{align*}
			f(a,b)&=a_0\bullet b=(x_+-x_-,-\phi(x_+-x_-))\bullet (w_+,\phi(z_+))=(x_+-x_-)\cdot w_++\phi(x_+-x_-)\cdot'\phi(z_+)\\
			&=(x_+-x_-)\cdot w_++(x_+-x_-)\cdot z_+=\underbrace{x_+\cdot w_+}_{=0}+\underbrace{x_+\cdot z_+}_{=0}-\underbrace{x_-}_{\in V_-}\cdot \underbrace{(w_++z_+)}_{\in V_-}=0\,,
		\end{align*}
		where we used the facts that~$\phi$ is an isometry and that~$V_-,V_+$ are isotropic.
	\end{proof}

	\section{multivariable Hosokawa polynomials}
	\label{sec:Hosokawa}
	
The aim of this section is to show that the manifold~$M_L$ yields renormalizations of the Alexander module and polynomial of a colored link, and to compute this renormalized polynomial explicitly. As we shall see, the result can be understood as a multivariable generalization of the Hosokawa polynomial defined in~\cite{Hos}.
This polynomial will play a crucial role in the understanding of the extended signatures, see Theorem~\ref{thm:main}.
	
	\medskip
	
We start with the definitions of these objects.
Let~$L=L_1\cup\dots\cup L_\mu$ be a~$\mu$-colored link. As usual, we denote by~$X_L$ its exterior, and by~$\varphi_X\colon\pi_1(X)\to\Z^\mu$ the (surjective) homomorphism defined by~$\varphi_X([\gamma])=\left(\lk(\gamma,L_i)\right)_i$. By Example~\ref{exs:twisted}\ref{exs:twisted-1}, one can consider the associated~$\Lambda$-module~$H_1(X_L;\Lambda)$, which is called the {\em Alexander module\/} of the colored link~$L$. Note that,
by the discussion in this same example, this module can also be defined as the untwisted homology of the~$\Z^\mu$-cover of~$X_L$ associated to the homomorphism~$\varphi_X$.
The {\em Alexander polynomial\/} of the  colored link~$L$ is a greatest common divisor~$\Delta_L\in\Lambda$
	of the first elementary ideal of~$H_1(X_L;\Lambda)$. Obviously, it is only well-defined up to multiplication
	by units of~$\Lambda$, i.e. a sign and powers of the~$t_i$'s. We will write~$\Delta\stackrel{\cdot}{=}\Delta'$
	for an equality in~$\Lambda$ up to multiplication by units.

By~\cite[Lemma~2.11]{CMP23}, the homomorphism~$\varphi_X$ extends to~$\varphi\colon\pi_1(M_L)\to\Z^\mu$, allowing us to play the same game with the closed $3$-manifold~$M_L=X_L\cup_\partial -P_L$.
	
	\begin{definition}
	\label{def:Hosokawa}
	The {\em Hosokawa polynomial\/} of~$L$ is a greatest common divisor~$\widetilde\Delta_L\in\Lambda$
	of the first elementary ideal of the~$\Lambda$-module~$\A(L):=H_1(M_L;\Lambda)$.
	\end{definition}
	
	Several remarks are in order.
	
	\begin{rems}
	\label{rems:Hoso}
	\begin{enumerate}
	\item\label{rems:Hoso-0} In general, the~$\Lambda$-module~$\A(L):=H_1(M_L;\Lambda)$ might depend on the choice of the meridional homomorphism~$\varphi\colon H_1(M_L)\to\Z^\mu$. However, the Hosokawa polynomial
	does not, as we shall see in Proposition~\ref{prop:Hosokawa} below.
	
	\item\label{rems:Hoso-1} If~$L=K$ is a knot, then~$M_L$ is the~0-surgery on~$K$ (recall Example~\ref{exs:ML}\ref{exs:ML-1}). In that case, the Mayer-Vietoris associated with the decomposition~$M_L=X_L\cup -P_L$ easily yields
	an isomorphism between the classical Alexander module~$H_1(X_L;\Lambda)$ and the
	renormalized version~$\A(L)=H_1(M_L;\Lambda)$. (This will be a special case of computations performed in detail below, see Remark~\ref{rems:prop}\ref{rems:prop-1}.) In particular, we have the equality~$\widetilde\Delta_K\stackrel{\cdot}{=}\Delta_K$ for knots,
	and nothing new happens in this case.
	
	\item Consider the localized ring
	\[
	\Lambda_S:=\Z[t_1^{\pm 1},\dots,t_\mu^{\pm 1},(t_1-1)^{-1},\dots,(t_\mu-1)^{-1}]\,.
	\]
	Composing the ring homomorphism~$\Z[\pi]\to\Lambda$ from Example~\ref{exs:twisted}\ref{exs:twisted-1} with the inclusion~$\Lambda\to\Lambda_S$,
	one can consider twisted homology groups~$H_*(M_L;\Lambda_S)$, and similarly for the spaces~$P_L$,~$X_L$
	and their intersection~$\partial X_L$. (This latter space is in general not connected, but this is not an issue,
	see the proof of Proposition~\ref{prop:Hosokawa} below.)
	It is not difficult to check that the spaces~$P_L$ and~$\partial X_L$ are~$\Lambda_S$-acyclic, meaning that
	their homology with twisted coefficients in~$\Lambda_S$ vanish. By the Mayer-Vietoris exact sequence for~$M_L=X_L\cup P_L$, we get an isomorphism
	\[
	H_1(X_L;\Lambda_S)\simeq H_1(M_L;\Lambda_S)\,.
	\]
	Therefore, nothing new happens with coefficients in~$\Lambda_S$ either. For the study of extended signatures however, it is crucial to work over the ring~$\Lambda$, and not over its localization~$\Lambda_S$.
	
	\item The ring~$\Lambda_S$ being a localization of~$\Lambda$, it is a flat~$\Lambda$-module. Therefore,
	the isomorphism displayed above can be expressed as~$\A(L)\otimes_\Lambda\Lambda_S\simeq H_1(X_L;\Lambda)\otimes\Lambda_S$. As a consequence, the Alexander polynomials~$\widetilde\Delta_L$ and~$\Delta_L$ coincide
	up to multiplication by units of~$\Lambda_S$, i.e. by units of~$\Lambda$ and powers of~$(t_i-1)^{-1}$. The main result of this section is an explicit computation of these powers, that we now present.
	\end{enumerate}
	\end{rems}
	
	Given a~$\mu$-colored link~$L=L_1\cup\dots\cup L_\mu$, let us denote by~$\vert L\vert$ the number of components of~$L$, and similarly by~$\vert L_i\vert$ the number of components of~$L_i$.
	
	\begin{prop}
	\label{prop:Hosokawa}
	For any~$\mu$-colored link~$L$, we have
	\[
	\widetilde\Delta_L(t)\stackrel{\cdot}{=}\frac{\Delta_L(t)}{(t-1)^{|L|-1}}
	\]
	if~$\mu=1$, and
	\[
	\widetilde\Delta_L(t_1,\dots,t_\mu)\stackrel{\cdot}{=}
	\prod_{i=1}^\mu (t_i-1)^{\nu_i}\Delta_L(t_1,\dots,t_\mu)\quad\text{with}\quad \nu_i=\Big(\sum_{\substack{K\subset L_i\\ K'\subset L\setminus L_i}}\vert\lk(K,K')\vert\Big)-\vert L_i\vert
	\]
	if~$\mu>1$.
	\end{prop}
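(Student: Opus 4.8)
The plan is to compute the Hosokawa polynomial~$\widetilde\Delta_L$ by comparing the $\Lambda$-module~$\A(L)=H_1(M_L;\Lambda)$ with the classical Alexander module~$H_1(X_L;\Lambda)$, using the Mayer--Vietoris sequence associated to the decomposition~$M_L=X_L\cup_\partial(-P_L)$. The key inputs are the twisted homology of the plumbed piece~$P_L$ and of the gluing torus~$\partial X_L=\partial P_L=\bigsqcup_{K\subset L}\partial D_K\times S^1$. Each boundary component~$\partial D_K\times S^1$ is a $2$-torus on which~$\varphi$ sends~$\{\ast\}\times S^1$ to~$t_i$ (if~$K\subset L_i$) and the longitude~$\partial D_K\times\{\ast\}$ to~$\varphi_X(\ell_K)$; since a Seifert longitude satisfies~$\lk(\ell_K,L_i)=0$ but its linking with the \emph{other} color classes need not vanish, the image~$\varphi(\ell_K)\in\Z^\mu$ is~$\sum_{j\neq i}\big(\sum_{K'\subset L_j}\lk(\ell_K,K')\big)\,t_j$, which is generically nontrivial. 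This means the torus~$\partial D_K\times S^1$ has a \emph{non-trivial} $\Lambda$-coefficient system as soon as that class is nonzero, and its twisted homology is computed by the Koszul-type complex of the two commuting generators~$t_i-1$ and~$\varphi(\ell_K)-1$; in particular~$H_*(\partial D_K\times S^1;\Lambda)$ is supported on the ideal~$(t_i-1,\ \varphi(\ell_K)-1)$, hence is torsion. The number~$\nu_i$ in the statement is exactly~$\big(\sum_{K\subset L_i,\,K'\subset L\setminus L_i}|\lk(K,K')|\big)-|L_i|$, so the sum over the edges of~$\Gamma_L$ adjacent to color~$i$ minus the number of vertices of color~$i$; this already signals that the answer is bookkeeping of the edges and vertices of~$\Gamma_L$ relative to each color.

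**Main steps.**
First I would compute~$H_*(P_L;\Lambda)$. The space~$P_L$ is built by plumbing copies of~$D_K^\circ\times S^1$ along the edges of~$\Gamma_L$; since each~$D_K^\circ$ is a disc with holes (homotopy equivalent to a wedge of circles), and the~$S^1$-factor carries~$t_i$ or a Laurent monomial, a careful Mayer--Vietoris / spectral-sequence argument over the graph~$\Gamma_L$ computes~$H_1(P_L;\Lambda)$ as an explicit~$\Lambda$-module, whose order ideal is a product of factors~$(t_i-1)$ (one per vertex) and factors coming from the edges. Second, I would compute~$H_*(\partial X_L;\Lambda)=\bigoplus_K H_*(\partial D_K\times S^1;\Lambda)$ as indicated above. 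Third, I would assemble the Mayer--Vietoris sequence
\[
H_1(\partial X_L;\Lambda)\to H_1(X_L;\Lambda)\oplus H_1(P_L;\Lambda)\to H_1(M_L;\Lambda)\to H_0(\partial X_L;\Lambda)\to\cdots
\]
and take first elementary ideals / orders. Over the localization~$\Lambda_S$ everything in sight except the Alexander modules becomes acyclic (Remark~\ref{rems:Hoso}), which pins down~$\widetilde\Delta_L$ up to units of~$\Lambda_S$; the content is to determine the exact powers of each~$(t_i-1)$, and for that one must track the maps between the torsion modules~$H_*(\partial X_L;\Lambda)$, $H_*(P_L;\Lambda)$ and~$H_*(M_L;\Lambda)$ integrally, using multiplicativity of orders in short exact sequences of torsion modules (a Chiswell / Turaev-type order argument). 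The $\mu=1$ case should drop out by specializing: there all longitudes map to~$0$, the tori become~$S^1\times S^1$ with coefficient~$t$ on one factor and trivial on the other, each contributes a factor~$(t-1)$, and one recovers~$\widetilde\Delta_L\doteq\Delta_L/(t-1)^{|L|-1}$ — recall also Remark~\ref{rems:Hoso}\ref{rems:Hoso-1} that~$\widetilde\Delta_K\doteq\Delta_K$ for a knot, consistent with~$|L|=1$.

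**The main obstacle.**
The hard part will be the integral (not merely~$\Lambda_S$-local) bookkeeping: showing that the total power of~$(t_i-1)$ contributed is precisely~$\nu_i$ and that there is no extra cancellation or contribution hidden in the connecting maps. Concretely, one must check that the $\Lambda$-module~$H_1(\partial D_K\times S^1;\Lambda)$ maps to~$H_1(P_L;\Lambda)$ in such a way that the $(t_i-1)$-primary parts line up correctly, and that~$H_0(\partial X_L;\Lambda)$ — which is~$\bigoplus_K \Lambda/(t_i-1,\varphi(\ell_K)-1)$ — contributes the surviving tail in the Mayer--Vietoris sequence exactly as needed. Equivalently, one could avoid most of this by working in the derived category: tensoring the chain complexes with~$\Lambda_S$ kills~$P_L$ and~$\partial X_L$ by acyclicity, so~$\widetilde\Delta_L\doteq\Delta_L$ in~$\Lambda_S$, and then one separately computes, for each~$i$, the~$(t_i-1)$-adic valuation of~$\mathrm{ord}\,\A(L)$ by base-changing to the discrete valuation ring~$\Lambda_{(t_i-1)}$ (localizing at the height-one prime~$(t_i-1)$) and running the Mayer--Vietoris there, where only the modules with a~$t_i$ in their coefficient system survive. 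The delicate point either way is that the longitudinal linking data—encoded in~$\varphi(\ell_K)$—must be shown to \emph{not} interfere with the~$(t_i-1)$-valuation beyond the count~$\sum_{K'\subset L\setminus L_i}|\lk(K,K')|$ per vertex~$K\subset L_i$; this is where the sign~$\varepsilon(e)=\sgn\lk(K,K')$ in~$\Gamma_L$ and the precise gluing~\eqref{eq:glue} enter, and getting the edge-count right (absolute values of linking numbers, summed over bichromatic pairs) is the crux of the argument.
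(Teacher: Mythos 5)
Your overall route is the one the paper takes: the Mayer--Vietoris sequence for $M_L=X_L\cup_\partial(-P_L)$ (with internal twisted homology to handle the disconnected pieces), explicit computation of $H_*(\partial\nu(L);\Lambda)$ and of $H_*(P_L;\Lambda)$ by decomposing $P_L$ along the edges of $\Gamma_L$, and multiplicativity of orders of torsion $\Lambda$-modules along exact sequences (the paper uses Levine's lemma; your ``Turaev-type order argument'', implemented in the paper via Reidemeister torsion to evaluate the contribution of each piece $D_K^\circ\times S^1$ as $(\varphi(m_K)-1)^{-\chi(D_K^\circ)}$, is the same device). Your alternative of localizing at the prime $(t_i-1)$ to pin down each valuation separately would also work and is close in spirit to the paper's comparison of $\Lambda$ with $\Lambda_S$.

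There is, however, a concrete gap: you truncate the Mayer--Vietoris sequence at $H_1(\partial\nu(L);\Lambda)$ and never account for the degree-two terms. To extract $\widetilde\Delta_L$ by multiplicativity of orders you need the full sequence, hence you must establish $H_2(\partial\nu(L);\Lambda)=0$, $H_2(X_L;\Lambda)=0$ (when $\Delta_L\neq 0$), $H_2(P_L;\Lambda)=0$, and above all compute $H_2(M_L;\Lambda)$; the paper does this via twisted Poincar\'e duality and the universal coefficient spectral sequence, obtaining $\Lambda/(t-1)$ for $\mu=1$ and $0$ for $\mu>1$. This term is precisely what makes the $\mu=1$ exponent $|L|-1$ rather than $|L|$: your heuristic that for $\mu=1$ ``each of the $|L|$ boundary tori contributes a factor $(t-1)$'' would, as stated, yield $\Delta_L/(t-1)^{|L|}$, and the compensating factor of $(t-1)$ comes from $H_2(M_L;\Lambda)$, not from the boundary tori or from $P_L$. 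A smaller omission: the degenerate case $\Delta_L=0$ must be handled separately, since order multiplicativity requires torsion modules; the paper disposes of it by a rank count over the quotient field of $\Lambda$, showing that then $\widetilde\Delta_L=0$ as well.
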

	
	We give two disclaimers before starting the proof.
	First, let us point out that we do not claim much originality here, as this proof uses standard techniques, and variations
	on parts of it can be found in the literature (see e.g. the proof of~\cite[Lemma~3.3]{BFP16}).
	Also, these standard techniques being well-known to the experts but rather numerous and sometimes cumbersome, we take the liberty not to
	recall them in a comprehensive background section, but to use them directly in the proof with appropriate references.
	
	\begin{proof}[Proof of Proposition~\ref{prop:Hosokawa}]
	The main idea is to compute the~$\Lambda$-modules appearing in the Mayer-Vietoris exact sequence in twisted homology associated to the decomposition~$M_L=X_L\cup P_L$, and to use Levine's classical result~\cite[Lemma~5]{Lev67} which allows to relate the greatest common divisors of first elementary ideals of modules
	in an exact sequence.
	
	The first step in this plan yields a subtlety: the spaces~$P_L$ and~$X_L\cap P_L=\partial\nu(L)$ are in general not connected,
	while the homology with twisted coefficients is only defined for connected spaces. The idea is to use the fact that these homology modules can also be computed via untwisted homology of covering spaces (recall Example~\ref{exs:twisted}\ref{exs:twisted-1}), and to apply the Mayer-Vietoris exact sequence with untwisted coefficients to these covering spaces. 
	More formally, we shall rely on~\cite[Chapter~240]{Friedl}, but still use the same notation for our twisted homology modules (even though we are actually dealing with so-called {\em internal twisted homology\/}). So, by Theorem~240.7 of~\cite{Friedl}, we have a Mayer-Vietoris exact sequence of~$\Lambda$-modules
	\begin{equation}
  \label{eq:MV}
  \begin{aligned}
	H_2(\partial\nu(L);\Lambda)&\longrightarrow H_2(X_L;\Lambda)\oplus H_2(P_L;\Lambda)\longrightarrow H_2(M_L;\Lambda)\\
	&\longrightarrow H_1(\partial\nu(L);\Lambda)\longrightarrow H_1(X_L;\Lambda)\oplus H_1(P_L;\Lambda)\longrightarrow \A(L)\\
	&\longrightarrow H_0(\partial\nu(L);\Lambda)\longrightarrow H_0(X_L;\Lambda)\oplus H_0(P_L;\Lambda)\longrightarrow H_0(M_L;\Lambda)\longrightarrow 0\,.
	\end{aligned}
	\end{equation}
	
	We start by computing the~$\Lambda$-modules~$H_*(\partial\nu(L);\Lambda)$.
	Let us point out once again that these are in general not twisted homology modules {\em stricto sensu\/}, but that
	they should be understood as~$H_*(p^{-1}(\partial\nu(L))$, with~$p\colon \widetilde{X}^{\varphi}_L\to X_L$
	the~$\Z^\mu$-cover corresponding to the group homomorphism
	\[
	\varphi\colon\pi_1(X_L)\longrightarrow\left<t_1,\dots,t_\mu\right>\,,\quad[\gamma]\longmapsto\prod_{i=1}^\mu t_i^{\lk(\gamma,L_i)}\,,
	\]
	where~$\Z^\mu$ is now denoted multiplicatively. (We could have used the~$\Z^\mu$-cover of~$M_L$ instead.)
	We start with the obvious decomposition~$H_*(p^{-1}(\partial\nu(L))=\bigoplus_{K\subset L}H_*(p^{-1}(\partial\nu(K))$. For each component~$K$ of~$L$, the torus~$\partial\nu(K)$ lifts to~$p^{-1}(\partial\nu(K))$ which consists in disjoint copies
	of cylinders if~$\varphi(\ell_K)=1$, and of planes otherwise. This implies
	\begin{equation}
	\label{eq:nu-2}
	H_2(\partial\nu(L);\Lambda)=0
	\end{equation}
and
	\begin{equation}
	\label{eq:nu-1}
	H_1(\partial\nu(L);\Lambda)=\bigoplus_{K\subset L}H_1(p^{-1}(\partial\nu(K))\simeq\bigoplus_{\substack{K\subset L\\\varphi(\ell_K)=1}}\Lambda/(\varphi(m_K)-1)\,.
	\end{equation}
	Furthermore, a straightforward computation using the definition of twisted homology yields
		\begin{equation}
		\label{eq:nu-0}
	H_0(\partial\nu(L);\Lambda)=\bigoplus_{K\subset L}H_0(\partial\nu(K);\Lambda)\simeq\bigoplus_{K\subset L}\Lambda/(\varphi(m_K)-1,\varphi(\ell_K)-1)\,.
	\end{equation}
	
	Let us go back to the exact sequence~\eqref{eq:MV}, and first assume that~$\Delta_L$ vanishes. This implies that the rank of~$H_1(X_L;\Lambda)$, i.e. the dimension
	of~$H_1(X_L;\Lambda)\otimes_\Lambda Q$, is positive, where~$Q=Q(\Lambda)$ stands for the quotient field of~$\Lambda$. The~$\Lambda$-module~$Q$ being flat, the sequence~\eqref{eq:MV} remains exact when tensored by~$Q$.
	From the equalities~\eqref{eq:nu-2}--\eqref{eq:nu-0} above, the~$Q$-vector spaces~$H_*(\partial\nu(L);\Lambda)\otimes_\Lambda Q$ vanish. Furthermore, one can check that~$H_1(P_L;\Lambda)\otimes Q\simeq H_1(P_L;Q)=0$,
	see the proof of~\cite[Lemma~3.3]{BFP16}. (This also follows from computations below.) Therefore, we
	get an isomorphism
	\begin{equation*}
	H_1(X_L;\Lambda)\otimes_\Lambda Q\simeq\A(L)\otimes_\Lambda Q\,.
	\end{equation*}
	As a consequence, the rank of~$\A(L)$ is positive, implying~$\widetilde\Delta_L=0$, and the proposition holds.
	
	From now on, we assume~$\Delta_L\neq 0$, which by the isomorphism above is equivalent to~$\widetilde\Delta_L\neq 0$.
	Given a finitely presented~$\Lambda$-module~$H$, let~$\Delta_H\in\Lambda$ denote a greatest common divisor
	of its first elementary ideal (see e.g~\cite[Chapter~6]{Lic97}). Note that by~\eqref{eq:nu-1} and~\eqref{eq:nu-0}, we have
	\begin{equation}
	\label{eq:Delta-nu}
	\Delta_{H_1(\partial\nu(L);\Lambda)}\stackrel{\cdot}{=}\prod_{\substack{K\subset L\\ \varphi(\ell_K)=1}}(\varphi(m_K)-1)\stackrel{\cdot}{=}\Delta_{H_0(\partial\nu(L);\Lambda)}\,.
	\end{equation}
	By a recursive use of~\cite[Lemma~5]{Lev67} (see also~\cite[Lemma~7.2.7]{Kau96} for a more recent proof)
	applied to~\eqref{eq:MV} together with~\eqref{eq:nu-2} and~\eqref{eq:Delta-nu},
	we have the following equality in~$\Lambda$:
	\begin{equation*}
	\Delta_{H_2(X_L;\Lambda)}\,\Delta_{H_2(P_L;\Lambda)}\,\widetilde{\Delta}_L\,\Delta_{H_0(X_L;\Lambda)}\,\Delta_{H_0(P_L;\Lambda)}\stackrel{\cdot}{=}
	\Delta_{H_2(M_L;\Lambda)}\,\Delta_L\,\Delta_{H_1(P_L;\Lambda)}\,\Delta_{H_0(M_L;\Lambda)}\,.
	\end{equation*}
	Since~$X_L$ and~$M_L$ are both connected, we have
	\[
	H_0(X_L;\Lambda)\simeq H_0(M_L;\Lambda)\simeq\Lambda/(t_1-1,\dots,t_\mu-1)\,.
	\]
	Also, one easily checks the well-known fact that~$H_2(X_L;\Lambda)=0$ if~$\Delta_L\neq 0$. Indeed, since~$X_L$ has the homotopy
	type of a~$2$-dimensional complex, the~$\Lambda$-module~$H_2(X_L;\Lambda)$ is torsion free.
	On the other hand, as~$H_0(X_L;\Lambda)\simeq\Z$ is torsion and~$H_1(X_L;\Lambda)$ is torsion since~$\Delta_L\neq 0$, we have~$0=\chi(X_L)=\dim H_2(X_L;Q)=\dim H_2(X_L;\Lambda)\otimes_\Lambda Q$ so~$H_2(X_L;\Lambda)$ is torsion. This~$\Lambda$-module being torsion free and torsion, it is trivial as claimed.
	 We now get
	\begin{equation}
		\label{eq:Delta}
	\Delta_{H_2(P_L;\Lambda)}\,\widetilde{\Delta}_L\,\Delta_{H_0(P_L;\Lambda)}\stackrel{\cdot}{=}
	\Delta_{H_2(M_L;\Lambda)}\,\Delta_L\,\Delta_{H_1(P_L;\Lambda)}\,.
	\end{equation}
	
	Since~$M_L$ is a closed oriented~3-manifold, we can apply Poincaré duality with twisted coefficients (see~\cite[Theorem~243.1]{Friedl}), yielding an isomorphism~$H_2(M_L;\Lambda)\simeq H^1(M_L;\Lambda)$.
	Furthermore, by the Universal Coefficient Spectral Sequence~\cite[Theorem~2.3]{Lev77} applied
	to the cellular chain complex of~$\Lambda$-modules~$C_*(\widetilde{M}^\varphi_L)$, we have an exact sequence
	\[
	0\longrightarrow\operatorname{Ext}_\Lambda^1(H_0(M_L;\Lambda),\Lambda)\longrightarrow
	H^1(M_L;\Lambda)\longrightarrow\operatorname{Hom}_\Lambda(H_1(M_L;\Lambda),\Lambda)\,.
	\]
	(Here, we used Propositions~239.2 and~239.6 of~\cite{Friedl} to compute the twisted homology and cohomology
	of~$M_L$ via its cover~$\widetilde{M}_L^\varphi$.)
	 The~$\Lambda$-module~$\A(L)=H_1(M_L;\Lambda)$ is torsion since~$\widetilde\Delta_L\neq 0$, so~$\operatorname{Hom}_\Lambda(H_1(M_L;\Lambda),\Lambda)$ vanishes, implying
	 \[
	 H^1(M_L;\Lambda)\simeq\operatorname{Ext}_\Lambda^1(H_0(M_L;\Lambda),\Lambda)\simeq\operatorname{Ext}_\Lambda^1(\Z,\Lambda)
	 \]
	 since~$M_L$ is connected. The definition of group cohomology and its identification
	 with cohomology of the Eilenberg-MacLane space (see e.g.~\cite[Chapter~I, Proposition~4.2]{Bro94}) implies
	 \[
	 \operatorname{Ext}_\Lambda^1(\Z,\Lambda)=\operatorname{Ext}_{\Z[\Z^\mu]}^1(\Z,\Lambda)=H^1(\Z^\mu;\Lambda)=H^1(\mathbb{T}^\mu;\Lambda)\,.
	 \]
	 Finally, Poincaré duality and the definition of twisted homology yields
	 \[
	 H^1(\mathbb{T}^\mu;\Lambda)\simeq H_{\mu-1}(\mathbb{T}^\mu;\Lambda)=H_{\mu-1}(\R^\mu)\,,
	 \]
	and we obtain
	\begin{equation}
	\label{eq:H2ML}
	H_2(M_L;\Lambda)\simeq \begin{cases}\Lambda/(t-1)&\text{if~$\mu=1$;}\\ 0&\text{else}.\end{cases}
	\end{equation}
	
	We finally come to the computation of the internal twisted homology modules~$H_*(P_L;\Lambda)$,
	i.e. of the~$\Lambda$-modules~$H_*(p^{-1}(P_L))$, where~$p\colon\widetilde{M}_L\to M_L$ denotes the~$\Z^\mu$-cover corresponding to the meridional homomorphism~$\varphi\colon\pi_1(M_L)\to\left<t_1,\dots,t_\mu\right>$. (Once again, we denote~$\Z^\mu$ multiplicatively.)
	Throughout this part of the proof, we will use the notation~$\widetilde{Y}:=p^{-1}(Y)$ for any subspace~$Y\subset M_L$.
	Recall that~$P_L$ is constructed by gluing together~3-manifolds~$D^\circ_K\times S^1$ indexed by components~$K\subset L$ along tori~$\mathbb{T}_e$ indexed by edges~$e$ of the graph~$\Gamma_L$ (see Section~\ref{sub:ML}).	
	Our main tool is the following exact sequence, which appears (in a slightly different form, with twisted~$Q$-coefficients) in the proof of~\cite[Lemma~3.3]{BFP16}, and (with untwisted real coefficients) in the proof of~\cite[Lemma~4.7]{CNT}.
Consider the exact sequence of cellular chain complexes
\[
0\longrightarrow\bigoplus_{e} C_*(\widetilde{\mathbb{T}_e})\longrightarrow\bigoplus_{K\subset L} C_*(\widetilde {D^\circ_K\times S^1})\longrightarrow C_*(\widetilde{P_L}
)\longrightarrow 0\,,
\]
where the first map sends a cell in~$\widetilde{\mathbb{T}_e}$ to the difference of the corresponding cells in~$\widetilde {D^\circ_K\times S^1}$ and in~$\widetilde {D^\circ_{K'}\times S^1}$ if~$e$ links~$K$ and~$K'$.
This induces a long exact sequence of~$\Lambda$-modules
\[
\begin{aligned}
\bigoplus_{K\subset L} H_2(\widetilde{D^\circ_K\times S^1})\longrightarrow &H_2(\widetilde{P_L})\longrightarrow\bigoplus_{e} H_1(\widetilde{\mathbb{T}_e})\longrightarrow\bigoplus_{K\subset L} H_1(\widetilde{D^\circ_K\times S^1})\\
\longrightarrow &H_1(\widetilde{P_L})
\longrightarrow\bigoplus_{e} H_0(\widetilde{\mathbb{T}_e})\longrightarrow\bigoplus_{K\subset L} H_0(\widetilde{D^\circ_K\times S^1})\longrightarrow H_0(\widetilde{P_L})\longrightarrow 0\,.
\end{aligned}
\]
Since the homomorphism~$\varphi$ maps~$[\ast_i\times S^1]$ to~$t_i$ for any~$\ast_i\in D_K$ with~$K\subset L_i$
while~$D^\circ_K$ retracts to a wedge of circles, we have~$H_2(\widetilde{D^\circ_K\times S^1})=0$ for all~$K$.
Also, since~$\varphi$ maps the meridian and longitude of~$\mathbb{T}_e$ to~$t_i$ and~$t_j$ if~$e$ links
components of colors~$i$ and~$j$, we have~$H_1(\widetilde{\mathbb{T}_e})=0$ and~$H_0(\widetilde{\mathbb{T}_e})\simeq\Lambda/(t_i-1,t_j-1)$ for all~$e$. As a consequence, we obtain
\begin{equation}
\label{eq:H2P}
H_2(P_L;\Lambda)=H_2(\widetilde{P_L})=0\,.
\end{equation}
Applying once again Levine's~\cite[Lemma~5]{Lev67} to the resulting exact sequence of~$\Lambda$-modules
\[
0\to\bigoplus_{K\subset L} H_1(\widetilde{D^\circ_K\times S^1})
\to H_1(\widetilde{P_L})
\to\bigoplus_{e} H_0(\widetilde{\mathbb{T}_e})\to\bigoplus_{K\subset L} H_0(\widetilde{D^\circ_K\times S^1})\to H_0(\widetilde{P_L})\to 0\,,
\]
and using the fact that~$\Delta_{H_0(\widetilde{\mathbb{T}_e})}\stackrel{\cdot}{=}1$ since edges of~$\Gamma_L$ link components of different colors, we get
\begin{equation}
\label{eq:Delta'}
\Delta_{H_0(P_L;\Lambda)}\prod_{K\subset L}\Delta_{H_1(\widetilde{D^\circ_K\times S^1})}\stackrel{\cdot}{=}\Delta_{H_1(P_L;\Lambda)}\prod_{K\subset L}\Delta_{H_0(\widetilde{D^\circ_K\times S^1})}
\end{equation}
Equations~\eqref{eq:Delta},~\eqref{eq:H2P} and~\eqref{eq:Delta'} now yield
\begin{equation}
\label{eq:Delta''}
\widetilde{\Delta}_L\,\prod_{K\subset L}\Delta_{H_0(\widetilde{D^\circ_K\times S^1})}\stackrel{\cdot}{=}
	\Delta_{H_2(M_L;\Lambda)}\,\Delta_L\,\prod_{K\subset L}\Delta_{H_1(\widetilde{D^\circ_K\times S^1})}\,.
\end{equation}
Note that~$D^\circ_K$ deformation retracts onto a wedge of~$\sum_{K'\subset L}\vert\lk(K,K')\vert$ circles,
the sum being over all~$K'$ of color different from the color of~$K$. It is not very difficult to compute the corresponding
twisted homology modules. Alternatively, and following~\cite{BFP16}, one can make a small detour through Reidemeister
torsion and use the existing literature on the topic: by~\cite[Theorem~4.7]{Tur01} and~\cite[Example~2.7]{Nic03}, we have
\begin{equation}
\label{eq:RT}
\frac{\Delta_{H_1(\widetilde{D^\circ_K\times S^1})}}{\Delta_{H_0(\widetilde{D^\circ_K\times S^1})}}\de (\varphi(m_K)-1)^{-\chi(D^\circ_K)}\,,
\end{equation}
an equality in the field of fractions~$Q$ up to multiplication by units of~$\Lambda$.
(Note that this tool could also have been used to obtain the equality~\eqref{eq:Delta-nu}.) In the case~$\mu=1$, Equations~\eqref{eq:H2ML},~\eqref{eq:Delta''} and~\eqref{eq:RT} imply
\[
\widetilde{\Delta}_L=(t-1)\prod_{K\subset L}(\varphi(m_K)-1)^{-1}\Delta_L=\frac{\Delta_L}{(t-1)^{\vert L\vert -1}}
\]
while in the case~$\mu>1$, they yield
\[
\widetilde{\Delta}_L=\prod_{K\subset L}(\varphi(m_K)-1)^{-\chi(D_K^\circ)}\,\Delta_L=\prod_{i=1}^\mu(t_i-1)^{\nu_i}\,\Delta_L
\]
with
\[
\nu_i=\sum_{K\subset L_i}-\chi(D_K^\circ)=\sum_{K\subset L_i}\left(\Big(\sum_{K'\subset L\setminus L_i}\vert\lk(K,K')\vert\Big)-1\right)=\Big(\sum_{\substack{K\subset L_i\\ K'\subset L\setminus L_i}}\vert\lk(K,K')\vert\Big)-\vert L_i\vert\,.
\]
This concludes the proof.
	\end{proof}

	We end this section with a number of remarks: the first one pertains to the Alexander modules,
	the remaining four to the Alexander polynomials.
	
	\begin{rems}
	\label{rems:prop}
	\begin{enumerate}
	\item\label{rems:prop-1} In the~$\mu=1$ case with $\Delta_L \neq 0$, the Mayer-Vietoris exact sequence~\eqref{eq:MV} takes the simple form
	\[
	0\longrightarrow\Lambda/(t-1)\longrightarrow\bigoplus_{K\subset L}\left(\Lambda/(t-1)\right)[\ell_K]\longrightarrow H_1(X_L;\Lambda)\longrightarrow\A(L)\longrightarrow 0\,,
	\]
	with the first homomorphism mapping a generator to~$\sum_{K\subset L}[\ell_K]$.
	Thus, we have an isomorphism~$\A(L)\simeq H_1(X_L;\Lambda)/\left<[\ell_K]\mid K\subset L\right>$
	which boils down to the aforementioned isomorphism~$\A(L)\simeq H_1(X_L;\Lambda)$ in the case of knots (recall Remark~\ref{rems:Hoso}\ref{rems:Hoso-1}).
	However, in the general case of~$\mu>1$, there does not seem to be any straightforward relation between the modules~$\A(L)$ and~$H_1(X_L;\Lambda)$.
		
		\item Since the (multivariable) Alexander polynomial admits a natural normalisation in the form of the Conway function~$\nabla_L$~\cite{Con67,Har}, it is possible to give a well-defined representative of~$\widetilde\Delta_L$
	in the ring~$\Z[t_1^{\pm 1/2},\dots,t_\mu^{\pm 1/2}]$ via
	\[
	\widetilde\Delta_L(t):=\frac{\nabla_L(t^{1/2})}{(t^{1/2}-t^{-1/2})^{|L|-2}}
	\]
	if~$\mu=1$, and
	\[
	\widetilde\Delta_L(t_1,\dots,t_\mu):=\prod_{i=1}^\mu (t_i^{1/2}-t_i^{-1/2})^{\nu_i}\,\nabla_L(t_1^{1/2},\dots,t_\mu^{1/2})
	\]
	if~$\mu>1$. Most probably, this renormalized version of~$\widetilde\Delta_L$ can be constructed using
	Turaev's sign-refined Reidemeister torsion~\cite{Tur86} of the manifold~$M_L$, but we shall not attempt to do so.

	\item In the case~$\mu=1$, the renormalized Alexander polynomial
	\[
	\widetilde\Delta_L(t)\stackrel{\cdot}{=}\frac{\Delta_L(t)}{(t-1)^{|L|-1}}\de\frac{\Delta_L(t,\dots,t)}{(t-1)^{|L|-2}}
	\]
	is nothing but the {\em Hosokawa polynomial\/} defined in~\cite{Hos}, hence the terminology. (Hosokawa
	used the notation~$\nabla$, which we avoid to prevent confusion with the Conway function.)

	\item In the case~$\mu=1$, Hosokawa showed that~$\widetilde{\Delta}_L\in\Z[t,t^{-1}]$ is a symmetric polynomial of even degree with~$\pm\widetilde{\Delta}_L(1)$ equal to the determinant of the reduced linking matrix of~$L$.
	The symmetry property obviously carries over to the case~$\mu>1$. Also, using the Universal Coefficient Spectral Sequence, it is possible to give a homological interpretation of~$\widetilde{\Delta}_L(1,\dots,1)$ in terms of the
	renormalized Alexander module~$\A(L)$. However, it does not seem to admit a simple formulation.
	In particular, it is not determined by the linking numbers, since any symmetric polynomial in~$\Z[t_1^{\pm 1/2},t_2^{\pm 1/2}]$ can be realised as the~2-variable Hosokawa polynomial of a~2-component link with vanishing linking number (see e.g.~\cite{Pla86}). 
	
	\item In the case~$\mu=1$, Hosokawa also showed that any symmetric polynomial can be realised as the Hosokawa polynomial of an oriented link of any given number of components.
	In general, the algebraic characterisation of the multivariable Hosokawa polynomial
	boils down to the corresponding question for the multivariable Alexander polynomial, aka Problem~2 in
	Fox's list~\cite{Fox61}, a notoriously difficult question (see e.g.~\cite{Pla86} and references therein).
	\end{enumerate}
	\end{rems}

	\section{Piecewise continuity of the signature}
	\label{sec:cont}
	
The aim of this section is to show that the behavior of the extended signature and nullity of a colored link~$L$ is strongly related to the renormalized Alexander module~$\A(L)$. In particular, we will see that the extended signature
is constant on the connected components of the complement of the zeros of the Hosokawa polynomial.

\medskip
	
	Let~$\mathcal{E}_r(L)\subset\Lambda$ be the~$r^\text{th}$-elementary ideal of the renormalized Alexander
	module~$\A(L)$, i.e. the ideal generated by the~$(m-r+1)\times(m-r+1)$-minors of a presentation matrix of~$\A(L)$ with~$m$
	generators and~$n$ relations, where we assume~$n\ge m$. By convention, we set~$\mathcal{E}_r(L)=0$ for~$r\le 0$ and~$\mathcal{E}_r(L)=\Lambda$ for~$r> m$. 

	\begin{thm}
		\label{thm:main}
		The algebraic subsets
		\[
		\Sigma_r(L):=\{\omega\in\T^\mu\setminus\{(1,\dots,1)\}\mid p(\omega)=0\text{ for all } p\in\mathcal{E}_{r}(L)\}
		\]
		form a filtration or the pointed torus
		\[
		\T^\mu\setminus\{(1,\dots,1)\}=\Sigma_0(L)\supset\Sigma_1(L)\supset\dots\supset\Sigma_\ell(L)=\emptyset
		\]
		such that for all~$r$, the signature~$\sigma_L$ is constant on the connected components of~$\Sigma_r(L)\setminus\Sigma_{r+1}(L)$. Moreover, the nullity~$\eta_L(\omega)$ is equal to~$r$ for~$\omega\in\Sigma_r(L)\setminus\Sigma_{r+1}(L)$ as long as at most two coordinates of~$\omega$ are equal to~$1$.
		Finally, if~$\mu=1$, then these facts hold on the full circle~$\T^1=S^1$.
	\end{thm}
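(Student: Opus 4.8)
The plan is to relate the extended signature and nullity of~$L$ at a point~$\omega$ to the twisted homology of the manifold~$M_L$ with coefficients in~$\C^\omega$, and then to use standard semicontinuity properties of signatures of families of Hermitian forms together with the elementary-ideal description of the jump loci of~$H_1(M_L;\C^\omega)$. First I would establish the nullity formula: using~\eqref{eq:rho} and the $\rho$-invariant interpretation, together with the fact that~$W_F$ has boundary~$M_{L^\#}$ and~$\pi_1(W_F)\simeq\Z^\mu$, one computes~$\eta_\omega(W_F)$ via the long exact sequence of the pair~$(W_F,\partial W_F)$ with~$\C^\omega$-coefficients. The key point is that~$\eta_\omega(W_F)$ differs from~$\dim H_1(M_{L^\#};\C^\omega)$ only by contributions that are controlled (the Borromean summands account precisely for the correction terms~$\sum\vert\mu_L(ijk)\vert$ in Definition~\ref{def:sign-null}), and that~$H_1(M_{L^\#};\C^\omega)\simeq H_1(M_L;\C^\omega)$ when no~$\omega_i=1$ — or more generally when at most two coordinates equal~$1$, since the Borromean summands become acyclic. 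Then~$\dim_\C H_1(M_L;\C^\omega)$ is, by the theory of elementary ideals over the PID-like behaviour at a point (i.e.\ specialising~$\Lambda\to\C$ via~$\omega$), exactly the largest~$r$ with~$\omega\in\Sigma_r(L)$, which gives~$\eta_L(\omega)=r$ on~$\Sigma_r(L)\setminus\Sigma_{r+1}(L)$.

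For the signature statement, the strategy is local: fix~$\omega_0\in\Sigma_r(L)\setminus\Sigma_{r+1}(L)$ and work in a connected component~$U$ of this stratum. On the regular part~$\Sigma_r\setminus\Sigma_{r+1}$ the dimension~$\dim H_1(M_L;\C^\omega)$ is locally constant (equal to~$r$), so the twisted intersection form~$Q^\omega_{W_F}$ varies in a family of Hermitian forms of locally constant rank as~$\omega$ ranges over~$U$; its signature is therefore locally constant there. Concretely, I would take a fixed presentation of~$\A(L)=H_1(M_L;\Lambda)$, build from~$W_F$ a Hermitian matrix~$H(\omega)$ over~$\C$ depending continuously (indeed real-analytically) on~$\omega$ whose signature is~$\sigma_L(\omega)$ and whose nullity on~$U$ is constant; continuity of eigenvalues then forces~$\sigma_L$ to be constant on the connected~$U$. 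This is the multivariable analogue of the classical argument that~$\sigma_L$ is constant off the roots of~$\Delta_L$; here the stratification by~$\Sigma_r(L)$ replaces the single hypersurface.

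The case~$\mu=1$ needs separate care at~$\omega=1$: here one must check that everything extends over the full circle rather than just~$S^1\setminus\{1\}$. By Proposition~\ref{prop:Hosokawa} and Remark~\ref{rems:prop}, for~$\mu=1$ the module~$\A(L)$ has presentation polynomial~$\widetilde\Delta_L=\Delta_L/(t-1)^{\vert L\vert-1}$, which is honestly defined and generically nonzero at~$t=1$, so~$\Sigma_1(L)$ is a genuine finite subset of~$S^1$ and the stratification makes sense on all of~$S^1$; the nullity bound "at most two coordinates equal to~$1$" is vacuous when~$\mu=1$. One must verify that~$H_1(M_L;\C^\omega)\simeq H_1(M_L;\C)$ at~$\omega=1$ behaves correctly — this uses the last part of Example~\ref{exs:twisted}\ref{item:Comega} and the Mayer--Vietoris computations already carried out in the proof of Proposition~\ref{prop:Hosokawa}, which go through verbatim with~$\C$-coefficients.

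The main obstacle I expect is the nullity computation when exactly one or two coordinates of~$\omega$ equal~$1$: there the twisted homology of the pieces~$P_L$ and~$\partial\nu(L)$ entering the Mayer--Vietoris sequence no longer vanishes (the relevant~$\varphi(m_K)-1$ can be zero), so one cannot simply read off~$\dim H_1(M_L;\C^\omega)=r$ from the elementary ideals; the extra homology must be tracked carefully and matched against the correction terms, which is exactly why the theorem only claims~$\eta_L(\omega)=r$ "as long as at most two coordinates of~$\omega$ are equal to~$1$." Keeping precise bookkeeping of these boundary contributions, and confirming that the signature argument (as opposed to the nullity count) survives even on the bad locus where coordinates hit~$1$, will be the delicate part.
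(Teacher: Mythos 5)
Your outline follows the same general architecture as the paper's proof (read the nullity off the twisted homology of the boundary, then get constancy of the signature because it can only jump where the nullity jumps), but two of your steps have genuine problems. First, the claimed isomorphism $H_1(M_{L^\#};\C^\omega)\simeq H_1(M_L;\C^\omega)$ when the Borromean summands are $\C^\omega$-acyclic is false: in the Mayer--Vietoris sequence of a connected sum the separating sphere contributes its $H_0\cong\C$, and when both sides have vanishing twisted $H_0$ (which is the case for $\omega\neq(1,\dots,1)$) this raises $\dim H_1$ by one for \emph{each} Borromean summand. That unconditional shift is exactly what the term $-\sum_{i<j<k}\vert\mu_L(ijk)\vert$ in Definition~\ref{def:sign-null} compensates, and it is why the paper must prove the index shift $\Sigma_r(L^\#)=\Sigma_{r-\vert\mu_L\vert}(L)$; this is not formal, but uses the exact sequence $0\to\A(L)\to\A(L^\#)\to\Lambda\to\Z\to0$ together with Traldi's inclusions of elementary ideals up to powers of the augmentation ideal. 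As written, your bookkeeping (an isomorphism of the $H_1$'s \emph{and} the corrections ``accounted for'' by the same summands) is internally inconsistent, and the passage from the ideals of $\A(L^\#)$ (which is what the $4$-manifold sees) to those of $\A(L)$ (which define the strata in the statement) is missing. Relatedly, ``$\dim H_1(M_L;\C^\omega)$ equals the largest $r$ with $\omega\in\Sigma_r(L)$'' is not mere ``PID-like behaviour at a point'': for $\mu>1$ twisted homology does not commute with specialisation, and you need the Universal Coefficient Spectral Sequence plus vanishing of $\operatorname{Tor}^\Lambda_*(H_0(M_L;\Lambda),\C^\omega)$ for $\omega\neq(1,\dots,1)$; note also that the caveat ``at most two coordinates equal to $1$'' comes from the extra correction $-2\sum_{\omega_i=\omega_j=\omega_k=1}\vert\mu_L(ijk)\vert$ in Definition~\ref{def:sign-null}, not from the homology of $P_L$ or $\partial\nu(L)$ as your last paragraph suggests.

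Second, and more centrally, the signature step is asserted rather than proved. Saying that $Q^\omega_{W_F}$ ``varies in a family of Hermitian forms of locally constant rank'' presupposes a fixed-size Hermitian matrix depending continuously on $\omega$ that represents the twisted intersection form throughout a neighbourhood; but $\dim_\C H_2(W_F;\C^\omega)$ itself jumps with $\omega$, so no such family exists for free. Producing it is the actual content of the paper's proof: one works over the localised rings $\LambdaC_j=\LambdaC[(t_j-1)^{-1}]$, shows via Poincar\'e--Lefschetz duality and the UCSS (on top of the freeness and vanishing results of~\cite{CMP23}, Appendix~B) that $H_2(W;\LambdaC_j)$ and $H_2(W,M_{L^\#};\LambdaC_j)$ are free of the same rank, extracts a square matrix $A_j$ presenting $H_1(M_{L^\#};\LambdaC_j)$ and a matrix $H_j$ representing the form, and invokes naturality so that $H_j(\omega)$ and $A_j(\omega)$ compute the pointwise form and the map $i_*$ for \emph{every} $\omega$ with $\omega_j\neq1$ --- in particular on the locus where other coordinates equal $1$, which you flag as ``the delicate part'' without offering a mechanism for it. Only with this universal matrix does ``nullity constant on $\Sigma_r\setminus\Sigma_{r+1}$, hence signature constant by eigenvalue continuity'' become a proof. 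Finally, the $\mu=1$ clause is not about $\widetilde\Delta_L$ being defined at $t=1$: it is that $\C[t^{\pm1}]$ is a PID, so the freeness holds without localisation and the same argument runs on all of $S^1$.
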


	Of course, an analogous result is known for non-extended signatures, see~\cite[Theorem~4.1]{CF08}. Let us also emphasize that the proof of Theorem~\ref{thm:main} heavily relies on the homological computations from~\cite[Appendix B]{CMP23}.

	\begin{proof}
	Let~$L$ be an arbitrary~$\mu$-colored link. As in Section~\ref{sub:ext}, let~$\varphi\colon H_1(M_L)\to\Z^\mu$ be a meridional homomorphism on the generalized Seifert surgery on~$L$, thus defining integers~$\mu_L\in\Z^{\mu\choose 3}$ and the auxiliary link~$L^\#$ via~\eqref{eq:Lsharp}. The map~$\varphi$ extends
	to~$\varphi^\#\colon H_1(M_{L^\#})\to\Z^\mu$, and there exists a compact oriented $4$-manifold~$W$ endowed
	with an isomorphism~$\Phi\colon\pi_1(W)\stackrel{\simeq}{\to}\Z^\mu$ such that~$\partial W= M_{L^\#}$ and~$\Phi$ extends~$\varphi^\#$. By definition, we have~$\sigma_L(\omega)=\sigma_\omega(W)$ while
	\[
	\eta_L(\omega)=\eta_\omega(W)-\sum_{i<j<k}\vert\mu_L(ijk)\vert-2\sum_{\stackrel{i<j<k}{\omega_i=\omega_j=\omega_k=1}}\vert\mu_L(ijk)\vert\,.
	\]
	Note in particular that as long as at most two coordinates of~$\omega$ are equal to~$1$, we have the equality~$\eta_L(\omega)=\eta_\omega(W)-\vert\mu_L\vert$, where~$\vert\mu_L\vert$ stands for~$\sum_{i<j<k}\vert\mu_L(ijk)\vert$.
	
	\medskip
	
		In order to prove the statement, it will be convenient to work with coefficients in the extended ring~$\LambdaC\coloneq\Lambda\otimes_\mathbb{Z}\mathbb{C}=\mathbb{C}[t_1^{\pm 1},\cdots,t_{\mu}^{\pm 1}]$. Note that this change is irrelevant to the result, since we are only interested in vanishing sets of polynomials. Moreover, for any~$j\in\{1,\cdots,\mu\}$, we set~$\LambdaC_j\coloneq\LambdaC[(t_j-1)^{-1}]$ to be the localized ring obtained by adjoining the inverse of~$t_j-1$ to~$\LambdaC$.
	
		First note that since~$W$ satisfies~$\pi_1(W) \simeq \mathbb{Z}^{\mu}$, we have~$H_1(W;\LambdaC) = 0$. Furthermore, from the proof of~\cite[Lemma B.1]{CMP23}, we know that the intersection form on~$H_2(W;\LambdaC_j)$ can be represented by a matrix which can in turn be used for computing the intersection form on~$H_2(W;\C^\omega)$. The main goal of the proof will thus be to relate a presentation matrix for~$\A(L)$ with a matrix representing the intersection form on~$H_2(W;\LambdaC_j)$. For this purpose, we need the following fact, whose proof is a standard application of the Universal Coefficient Spectral Sequence (in the following, abbreviated UCSS)~\cite[Theorem 2.3]{Lev77}: we claim that~$H_2(W,M_{L^\#}; \LambdaC_j)$ is a free~$\LambdaC_j$-module for all~$j\in\{1,\cdots,\mu\}$.
	
		Indeed, we have the Poincaré-Lefschetz duality isomorphism~$H_2(W,M_{L^\#};\LambdaC_j)\cong H^2(W; \LambdaC_j)$. By the UCSS applied to the (cellular) chain complex of the universal cover of~$W$, we have a spectral sequence
		$$E_2^{p,q}=\operatorname{Ext}_{\LambdaC}^q(H_p(W;\LambdaC),\LambdaC_j) \implies H^{p+q} (W;\LambdaC_j)$$
		with differentials of bidegree~$(1-r,r)$.
		Morerover, since~$\LambdaC_j$ is a flat~$\LambdaC$-module (being constructed by localization), by change of rings for~$\operatorname{Ext}$ (see e.g. \cite[Chapter~IV, Proposition~12.2]{HS97}), we have
		$$ \operatorname{Ext}_{\LambdaC}^q(H_p(W;\LambdaC),\LambdaC_j) \cong \operatorname{Ext}_{\LambdaC_j}^q(H_p(W;\LambdaC)\otimes_{\LambdaC} \LambdaC_j,\LambdaC_j) \cong \operatorname{Ext}_{\LambdaC_j}^q(H_p(W;\LambdaC_j),\LambdaC_j)\,, $$
		where in the second isomorphism we use the fact that~$\LambdaC_j$ is flat over~$\LambdaC$.	
		Now, by~\cite[Lemma~B.8]{CMP23}, the module~$H_p(W;\LambdaC_j)$ vanishes if~$p\neq 2$, so~$E_2^{p,q} = 0$ if~$p\neq 2$. For all~$n$, the UCSS then yields the isomorphism~$H^n(W;\LambdaC_j) \cong \operatorname{Ext}_{\LambdaC_j}^{n-2}(H_2(W;\LambdaC_j), \LambdaC_j)$, and in particular
		$$ H^2(W;\LambdaC_j) \cong \operatorname{Ext}_{\LambdaC_j}^0(H_2(W;\LambdaC_j),\LambdaC_j) \cong \operatorname{Hom}_{\LambdaC_j}(H_2(W;\LambdaC_j), \LambdaC_j)\,.$$
		Since~$H_2(W;\LambdaC_j)$ is a free~$\LambdaC_j$-module by~\cite[Lemma B.7]{CMP23}, we can conclude that the~$\LambdaC_j$-module $H_2(W,M_{L^\#};\LambdaC_j) \cong H^2 (W;\LambdaC_j)$ is free as well, which proves the claim.
	
		Therefore, since~$H_2(W;\LambdaC_j)$ and~$H_2(W,M_{L^\#};\LambdaC_j)$ are both free of the same rank
		and~$H_1(W;\LambdaC_j)$ vanishes, the long exact sequence of the pair gives a presentation
		$$H_2(W; \LambdaC_j)\stackrel{i_*}{\longrightarrow} H_2(W,M_{L^\#}; \LambdaC_j) \longrightarrow H_1(M_{L^\#}; \LambdaC_j) \longrightarrow 0$$
		of~$H_1(M_{L^\#};\LambdaC_j)$. Let~$A_j$ be a (square) matrix representing~$i_*$ in the above presentation. Recall also that the intersection form on~$H_2(W;\LambdaC_j)$ is defined by 
		$$
		H_2(W;\LambdaC_j) \stackrel{i_*}{\longrightarrow} H_2(W,M_{L^\#};\LambdaC_j) \stackrel{\mathrm{PD}}{\longrightarrow} H^2(W;\LambdaC_j)\stackrel{\mathrm{ev}}{\longrightarrow} \operatorname{Hom}_{\LambdaC_j}(H_2(W;\LambdaC_j),\LambdaC_j)\,,$$
		where second map is the Poincaré duality isomorphism and the third is the evaluation isomorphism arising from the UCSS, as explained above. Let~$H_j$ be a matrix representing the intersection form on~$H_2(W;\LambdaC_j)$. By naturality of the intersection form \cite[Lemma B.5]{CMP23}, for any~$\omega\in\T^\mu$ with~$\omega_j \neq 1$, the intersection form on~$H_2(W;\C^\omega)$ is represented by~$H_j(\omega)$, and similarly, the map~$i_*:H_2(W;\C^\omega)\longrightarrow H_2(W,M_{L^\#};\C^\omega)$ is represented by~$A_j(\omega)$.
	
		To conclude, take~$\omega\in\T^\mu \setminus\{1,\dots,1\}$ and choose a~$j\in\{1,\dots,\mu\}$ such that~$\omega_j\neq 1$. Let~$n$ denote~$\operatorname{rank}_\mathbb{C}H_2(W;\C^\omega)=\operatorname{rank}_{\LambdaC_j}H_2(W;\LambdaC_j)$: once again, the equality follows from the UCSS, as explained in the proof of \cite[Lemma B.1]{CMP23}. Then, for any~$r\geq 0$,
		$$
		\begin{aligned}
			\eta_\omega(W) = \nul(H_j(\omega)) \geq r &\iff \operatorname{rank}H_j(\omega) \leq n-r \iff \operatorname{rank}A_j(\omega) \leq n-r\\
			&\iff \text{all } (n-r+1)\text{-minors of } A_j(\omega) \text{ vanish}\\
			&\iff p(\omega) = 0 \text{ for all }p\in \mathcal{E}_r(A_j).\\
		\end{aligned}$$
		Since~$A_j$ is a presentation matrix of~$H_1(M_{L^\#};\LambdaC_j)\cong \A({L^\#})\otimes_{\Lambda}\LambdaC_j$, it follows that 
		\begin{equation}
		\label{eq:eta}
		\begin{aligned}
		\eta_\omega(W)\geq r &\iff p(\omega) = 0 \text{ for all }p\in \mathcal{E}_r(\A({L^\#})\otimes_{\Lambda}\LambdaC_j)\\
		&\iff p(\omega) = 0 \text{ for all }p\in \mathcal{E}_r({L^\#})
		\iff\omega\in\Sigma_r({L^\#})\,.
		\end{aligned}
		\end{equation}
		Thus, the nullity~$\eta_\omega(W)=\nul(H_j(\omega))$ is constant equal to~$r$ on~$\Sigma_r({L^\#})\setminus\Sigma_{r+1}({L^\#})$. Since the
		signature~$\sigma_L(\omega)=\sign(H_j(\omega))$ can only change value when the
		nullity~$\eta_\omega(W)=\nul(H_j(\omega))$ changes value, we deduce that~$\sigma_L$ is constant on the connected components of~$\Sigma_r({L^\#})\setminus\Sigma_{r+1}({L^\#})$.
		
		To conclude the proof, we now show the equality
	\begin{equation}
	\label{eq:Sigma}
	\Sigma_r(L^\#)=\Sigma_{r-\vert\mu_L\vert}(L)\subset\T^\mu\setminus\{(1,\dots,1)\}
	\end{equation}
	for all~$r\ge 0$.
	To do so, we can assume~$\mu\ge 3$, as~$L^\#=L$ and~$\vert\mu_L\vert=0$ otherwise. Clearly, it is enough check that if~$L^\#$ is given by the distant sum of~$L$ with one copy of a $3$-colored (and arbitrarily oriented) Borromean ring~$B$,
	then we have the equality~$\Sigma_r(L^\#)=\Sigma_{r-1}(L)$ in~$\T^\mu\setminus\{(1,\dots,1)\}$, as the general case
	can be recovered inductively. By construction,
	the manifold~$M_{L^\#}$ is then given by the connected sum of~$M_L$ with~$M_{B}$, which by Example~\ref{exs:ML}\ref{exs:ML-1} is the $3$-torus endowed with the homomorphism~$H_1(S^1\times S^1\times S^1)\to\Z^\mu$
	determined by the orientations and colors of the components of~$B$. A straighforward Mayer-Vietoris argument (using Theorem~240.7 of~\cite{Friedl} as in the proof of Proposition~\ref{prop:Hosokawa}) yields the exact sequence
	\[
	0\longrightarrow\A(L)\longrightarrow\A(L^\#)\longrightarrow\Lambda\stackrel{\varepsilon}{\longrightarrow}\Z\longrightarrow 0\,,
	\]
	where~$\varepsilon$ is the augmentation homomorphism defined by~$\varepsilon(p)=p(1,\dots,1)$.
	By~\cite{Tra82}, we have the inclusions
	\[
	\mathcal{E}_{r-1}(L)\cdot \mathcal{K}^{\mu-1}\subset \mathcal{E}_r(L^\#)\quad\text{and}\quad\mathcal{E}_r(L^\#)\cdot \mathcal{K}^{{\mu-1}\choose{2}}\subset\mathcal{E}_{r-1}(L)\,,
	\]
	where~$\mathcal{K}$ stands for the augmentation ideal~$\Ker(\varepsilon)$. Now, assume that~$\omega\in\T^\mu\setminus\{(1,\dots,1)\}$ belongs to~$\Sigma_r(L^\#)$. By definition, we have~$p^\#(\omega)=0$ for
	all~$p^\#\in\mathcal{E}_r(L^\#)$; by the first inclusion displayed above, we have in particular~$p(\omega)q(\omega)^{\mu-1}=0$ for all~$p\in\mathcal{E}_{r-1}(L)$ and all~$q\in\mathcal{K}$.
	Fixing an arbitrary~$p\in\mathcal{E}_{r-1}(L)$, we either have~$p(\omega)=0$, or~$q(\omega)=0$ for all~$q\in\K$.
	The latter is excluded, since any~$\omega\neq (1,\dots,1)$ admits a polynomial~$q=t_i-1\in\K$ such that~$q(\omega)\neq 0$. As we explicitely excluded the value~$\omega=(1,\dots,1)$, we have~$p(\omega)=0$. Therefore, the element~$\omega$ belongs to~$\Sigma_{r-1}(L)$, and the inclusion~$\Sigma_r(L^\#)\subset\Sigma_{r-1}(L)$ is checked. The reverse inclusion can be verified in the same way using the second inclusion diplayed above.
	
		Recall that we have the equality~$\eta_L(\omega)=\eta_\omega(W)-\vert\mu_L\vert$ as long as at most two coordinates of~$\omega$ are equal to~$1$. Therefore, by~\eqref{eq:eta} and~\eqref{eq:Sigma}, the nullity~$\eta_L(\omega)$ is constant equal to~$r$ on~$\Sigma_r\setminus\Sigma_{r+1}$ as long as this condition is satisfied.
	
		Let us finally assume that~$\mu = 1$. Since~$\LambdaC = \C[t_1^{\pm 1}]$ is a PID, the Universal Coefficient Theorem directly implies that~$H_2(W;\LambdaC)$ is free. All the arguments above go through directly without needing to work in the localized rings~$\LambdaC_j$ (compare with the proof of~\cite[Lemma B.1]{CMP23}). Therefore, the result holds on the full circle~$\T^1$.
	\end{proof} 
	
	Theorem~\ref{thm:main} and Proposition~\ref{prop:Hosokawa} directly imply the following corollaries,
	which deal with the $\mu=1$ and~$\mu>1$ cases, respectively.
	
	\begin{cor}
	\label{cor:cont1}
	The extended Levine-Tristram signature~$\sigma_L\colon S^1\to\Z$ of an oriented link~$L$ is constant on
	the connected components of the complement of the zeros of the Hosokawa polynomial~$\widetilde\Delta_L(t)=\frac{\Delta_L(t)}{(t-1)^{|L|-1}}$.\qed
	\end{cor}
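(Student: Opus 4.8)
The plan is to obtain this corollary by specializing Theorem~\ref{thm:main} to the case~$\mu=1$ and then identifying the first stratum~$\Sigma_1(L)$ with the zero locus of the Hosokawa polynomial, using Proposition~\ref{prop:Hosokawa}. Concretely, Theorem~\ref{thm:main} with~$\mu=1$ asserts that all of its conclusions hold on the full circle~$S^1$; in particular,~$\sigma_L$ is constant on the connected components of~$\Sigma_0(L)\setminus\Sigma_1(L)$, where~$\Sigma_0(L)=S^1$ (as~$\mathcal{E}_0(L)=0$) and
\[
\Sigma_1(L)=\{\omega\in S^1\mid p(\omega)=0\text{ for all }p\in\mathcal{E}_1(L)\}\,,
\]
with~$\mathcal{E}_1(L)$ the first elementary ideal of~$\A(L)=H_1(M_L;\Lambda)$. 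Thus it suffices to show that~$\Sigma_1(L)$ is exactly the set of~$\omega\in S^1$ at which the renormalized polynomial~$\Delta_L(t)/(t-1)^{|L|-1}$ vanishes.

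To do so, I would first recall that, by Definition~\ref{def:Hosokawa}, the Hosokawa polynomial~$\widetilde\Delta_L$ is a greatest common divisor of~$\mathcal{E}_1(L)$. If~$\widetilde\Delta_L=0$ then~$\mathcal{E}_1(L)=0$ and~$\Sigma_1(L)=S^1$, so~$S^1\setminus\Sigma_1(L)=\emptyset$ and the corollary is vacuous; hence assume~$\widetilde\Delta_L\neq 0$. Since~$\widetilde\Delta_L$ divides every element of~$\mathcal{E}_1(L)$, its zero set in~$S^1$ is contained in~$\Sigma_1(L)$. For the reverse inclusion, extend scalars from~$\Lambda$ to~$\LambdaC=\C[t^{\pm 1}]$ — a harmless move, since~$\Sigma_1(L)$ only depends on the vanishing of polynomials over~$\C$, exactly as in the proof of Theorem~\ref{thm:main}. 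As~$\LambdaC$ is a principal ideal domain, the ideal~$\mathcal{E}_1(L)\otimes_\Lambda\LambdaC$ is generated by a greatest common divisor of~$\mathcal{E}_1(L)$, which agrees with~$\widetilde\Delta_L$ up to a nonzero constant and hence has the same zero set. Therefore~$\Sigma_1(L)=\{\omega\in S^1\mid\widetilde\Delta_L(\omega)=0\}$, and Proposition~\ref{prop:Hosokawa} (in the case~$\mu=1$) identifies this with the zero set of~$\Delta_L(t)/(t-1)^{|L|-1}$, completing the argument.

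The proof is essentially a bookkeeping exercise: the substance is already contained in Theorem~\ref{thm:main} and Proposition~\ref{prop:Hosokawa}. The only point deserving a sentence of care is the identification of the common zero locus of the ideal~$\mathcal{E}_1(L)$ with the zero locus of its greatest common divisor~$\widetilde\Delta_L$, which is where passing to the PID~$\C[t^{\pm 1}]$ is used; no genuine obstacle is expected.
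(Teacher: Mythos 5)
Your proposal is correct and follows exactly the route the paper intends: the corollary is stated as an immediate consequence of Theorem~\ref{thm:main} (in the~$\mu=1$ case, on the full circle) combined with Proposition~\ref{prop:Hosokawa}, and your only added content is the routine identification of~$\Sigma_1(L)$ with the zero locus of the gcd~$\widetilde\Delta_L$ via the PID~$\C[t^{\pm1}]$, which is precisely the step the paper leaves implicit. The argument is sound (the inclusion~$\Sigma_1(L)\subseteq\{\widetilde\Delta_L=0\}$ obtained from the PID step is the one actually needed to transfer constancy), so nothing further is required.
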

	
	In particular, if~$(t-1)^{|L|}$ does not divide~$\Delta_L$ in~$\Lambda$, then~$\lim_{\omega\to 1}\sigma_L(\omega)$ is equal to~$\sigma_L(1)$, the signature of the linking matrix of~$L$.
	This is the main result of~\cite{BZ22} (see also~\cite{CMP23}).

	\begin{exs}
	\begin{enumerate}
	\item If~$L=K$ is a knot, then~$\widetilde\Delta_K\de\Delta_K$ satisfies~$\Delta_K(1)=\pm 1$, leading to the well-known and easy fact that~$\lim_{\omega\to 1}\sigma_K(\omega)=\sigma_K(1)=0$.
	\item The (1-colored, positive) Hopf link has Alexander polynomial~$\Delta_L\de t-1$, hence Hosokawa
	polynomial~$\widetilde\Delta_L\de 1$ and constant signature~$\sigma_L\colon S^1\to\Z$ (equal to~$-1$).
	\end{enumerate}
	\end{exs}
	
	\begin{cor}
	\label{cor:cont2}
	The extended signature~$\sigma_L\colon \T^\mu\setminus\{(1,\dots,1)\}\to\Z$ of a~$\mu$-colored link~$L$ with~$\mu>1$ 
	is constant on the connected components of the complement of the zeros of the Hosokawa polynomial
	$\widetilde\Delta_L(t_1,\dots,t_\mu)\stackrel{\cdot}{=}\prod_{i=1}^\mu (t_i-1)^{\nu_i}\Delta_L(t_1,\dots,t_\mu)$, where $\nu_i=\Big(\sum_{\substack{K\subset L_i\\ K'\subset L\setminus L_i}}\vert\lk(K,K')\vert\Big)-\vert L_i\vert$.\qed
	\end{cor}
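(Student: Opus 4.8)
The plan is to deduce the statement directly from Theorem~\ref{thm:main} applied with $r=0$, the only real work being to identify the first stratum $\Sigma_1(L)$ with the zero set of the Hosokawa polynomial on the pointed torus. Indeed, Theorem~\ref{thm:main} already gives that $\sigma_L$ is constant on the connected components of $\Sigma_0(L)\setminus\Sigma_1(L)=(\T^\mu\setminus\{(1,\dots,1)\})\setminus\Sigma_1(L)$, so it suffices to prove
\[
\Sigma_1(L)=\{\omega\in\T^\mu\setminus\{(1,\dots,1)\}\mid\widetilde\Delta_L(\omega)=0\}\,,
\]
after which Proposition~\ref{prop:Hosokawa} rewrites $\widetilde\Delta_L$ as $\prod_{i=1}^\mu(t_i-1)^{\nu_i}\Delta_L$ up to units, with $\nu_i$ as in the statement.

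Since $\widetilde\Delta_L$ is a greatest common divisor of the first elementary ideal $\mathcal{E}_1(L)$ of $\A(L)=H_1(M_L;\Lambda)$ (Definition~\ref{def:Hosokawa}), we have $\mathcal{E}_1(L)\subseteq(\widetilde\Delta_L)$, hence $\{\widetilde\Delta_L=0\}\subseteq\Sigma_1(L)$ on the pointed torus; the delicate direction is that every $\omega\in\Sigma_1(L)$ is a zero of $\widetilde\Delta_L$. For this I would show that, after localizing, $\mathcal{E}_1(L)$ becomes principal. Fix $\omega\in\Sigma_1(L)$; as $\omega\neq(1,\dots,1)$, pick an index $j$ with $\omega_j\neq1$ and pass to coefficients in the localized ring $\LambdaC_j$, exactly as in the proof of Theorem~\ref{thm:main}. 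Since $t_j-1$ becomes a unit one has $H_0(M_L;\LambdaC_j)=\LambdaC_j/(t_1-1,\dots,t_\mu-1)=0$, while $H_3(M_L;\LambdaC_j)=0$ because it is the top homology of the (non-compact) $\Z^\mu$-cover of $M_L$. Using Poincaré duality for the closed oriented $3$-manifold $M_L$, together with these two vanishings and the fact that finitely generated projective $\LambdaC_j$-modules are stably free (hence free after a harmless stabilization of the complex), the twisted chain complex of $M_L$ over $\LambdaC_j$ is chain homotopy equivalent to a two-term complex $D_2\xrightarrow{d}D_1$ of free $\LambdaC_j$-modules presenting $H_1(M_L;\LambdaC_j)\cong\A(L)\otimes_\Lambda\LambdaC_j$; an Euler characteristic count ($\chi(M_L)=0$ together with $H_0=H_3=0$ forces $\operatorname{rank}D_1=\operatorname{rank}D_2$) shows that $d$ is \emph{square}. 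Consequently $\mathcal{E}_1(\A(L))\otimes_\Lambda\LambdaC_j=\mathcal{E}_1(\A(L)\otimes_\Lambda\LambdaC_j)$ is the principal ideal $(\det d)$, and comparing greatest common divisors, $\widetilde\Delta_L$ and $\det d$ agree in $\LambdaC_j$ up to a unit and a power of $t_j-1$. As neither of these vanishes at $\omega$, the relation $\omega\in\Sigma_1(L)$---that is, $(\det d)(\omega)=0$---forces $\widetilde\Delta_L(\omega)=0$, as desired.

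Granting the displayed equality, the corollary follows at once from Theorem~\ref{thm:main} and Proposition~\ref{prop:Hosokawa}. The step I expect to be the main obstacle is exactly this ``principality after localization'', i.e.\ ruling out that $\mathcal{E}_1(L)$ has extra codimension-$\ge2$ vanishing components on the pointed torus beyond the hypersurface $\{\widetilde\Delta_L=0\}$ (were they present, $\sigma_L$ could in principle jump across them). This is the kind of homological bookkeeping already carried out in \cite[Appendix~B]{CMP23}---where the analogous matrices $A_j$ are exhibited as square presentation matrices of $H_1(M_{L^\#};\LambdaC_j)$---so I would import those computations rather than reprove them. In the special case $\mu=1$ (Corollary~\ref{cor:cont1}) the argument trivializes, since $\LambdaC=\C[t^{\pm1}]$ is a PID and every finitely generated torsion module over it admits a square presentation matrix.
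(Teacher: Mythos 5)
Your proposal is correct in outline, and it is in fact more explicit than the paper, which offers no separate argument: Corollary~\ref{cor:cont2} is stated as an immediate consequence of Theorem~\ref{thm:main} and Proposition~\ref{prop:Hosokawa}. You rightly isolate the one point that is not purely formal for $\mu>1$, namely the inclusion $\Sigma_1(L)\subset\{\widetilde\Delta_L=0\}$ (the reverse inclusion being automatic since $\widetilde\Delta_L$ divides every element of $\mathcal{E}_1(L)$; for $\mu=1$ the identification is formal because every ideal of $\C[t^{\pm1}]$ is principal, generated by the gcd of its elements, which is why Corollary~\ref{cor:cont1} really is immediate). Your route --- localize at $t_j-1$ for some $j$ with $\omega_j\neq1$ and show that $\mathcal{E}_1$ becomes principal because $\A(L)\otimes_\Lambda\LambdaC_j$ admits a square presentation --- is a sensible way to supply this missing identification, and note that it genuinely does work that is not contained in the proof of Theorem~\ref{thm:main}: the square matrix $A_j$ there presents $H_1(M_{L^\#};\LambdaC_j)$, so when $\mu_L\neq 0$ it has identically vanishing determinant and only makes $\mathcal{E}_1(L^\#)$ principal, not $\mathcal{E}_1(L)$; hence a direct argument on the $3$-manifold $M_L$, as you propose, is a reasonable way to close the gap.

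Two steps of your sketch need to be shored up. First, the compression of $C_*(M_L;\LambdaC_j)$ to a two-term complex requires not only $H_0=H_3=0$ but also $H_2(M_L;\LambdaC_j)=0$; this holds exactly when $\A(L)\otimes_\Lambda\LambdaC_j$ is torsion, i.e.\ when $\widetilde\Delta_L\neq 0$ (compare~\eqref{eq:H2ML} and the duality argument preceding it), so you should first dispose of the vacuous case $\widetilde\Delta_L=0$. Second, ``chain homotopy equivalent to a two-term complex'' does not follow from these vanishings together with stable freeness alone: the actual mechanism is that, the homology being concentrated in degree~$1$, the universal coefficient spectral sequence degenerates and gives $\operatorname{Ext}^q_{\LambdaC_j}(H_1(M_L;\LambdaC_j),\LambdaC_j)\cong H^{q+1}(M_L;\LambdaC_j)\cong H_{2-q}(M_L;\LambdaC_j)$ by Poincar\'e duality, so these $\operatorname{Ext}$ groups vanish for $q\ge 2$ and the projective dimension of $\A(L)\otimes_\Lambda\LambdaC_j$ over the regular Noetherian ring $\LambdaC_j$ is at most~$1$; combined with your splitting of $C_1\to C_0$ and a stabilization, torsionness then forces a square presentation, so $\mathcal{E}_1(\A(L)\otimes_\Lambda\LambdaC_j)$ is principal, generated by $\widetilde\Delta_L$ up to units of $\LambdaC_j$, which do not vanish on $\{\omega_j\neq 1\}$. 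With these two points made explicit, your deduction of the corollary from Theorem~\ref{thm:main} and Proposition~\ref{prop:Hosokawa} is complete.
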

	
	Note the simple form of this polynomial in case of a~2-colored~2-component link~$L=K_1\cup K_2$, namely
	\[
	\widetilde\Delta_L(t_1,t_2)\stackrel{\cdot}{=}\left[(t_1-1)(t_2-1)\right]^{\vert\lk(K_1,K_2)\vert-1}\Delta_L(t_1,t_2)\,.
	\]
	
	\medskip
	
	Let us illustrate this corollary with several concrete examples of links, all drawn in Figure~\ref{fig:ex}.

	\begin{exs}
	\begin{enumerate}
	\item The (positive or negative)~2-colored Hopf link has linking number~$\pm 1$ and Alexander polynomial~$1$,
	hence Hosokawa polynomial~$1$ as well. By Theorem~\ref{thm:main}, its signature is
	constant (equal to~0) on~$\T^2\setminus\{(1,1)\}$. This is coherent with the easy computations from~\cite{CF08}
	and~\cite[Theorem~4.6]{CMP23}.
	
	\begin{figure}[tbp]
    \centering
   \includegraphics[width=3cm]{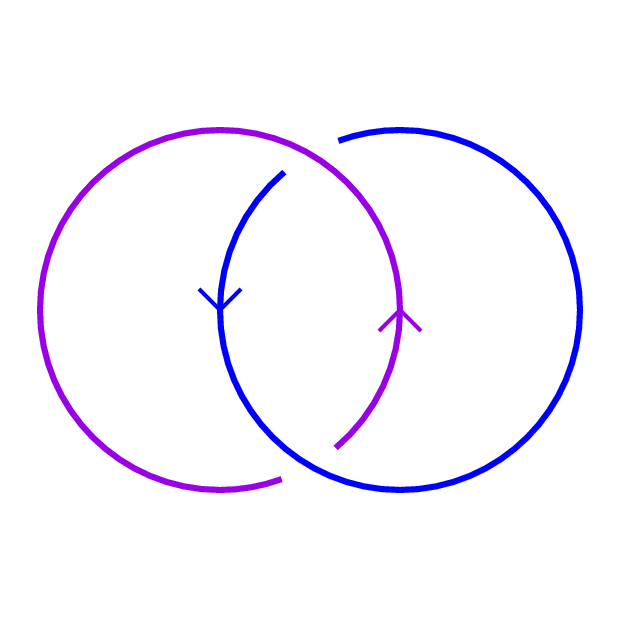}\qquad\includegraphics[width=3cm]{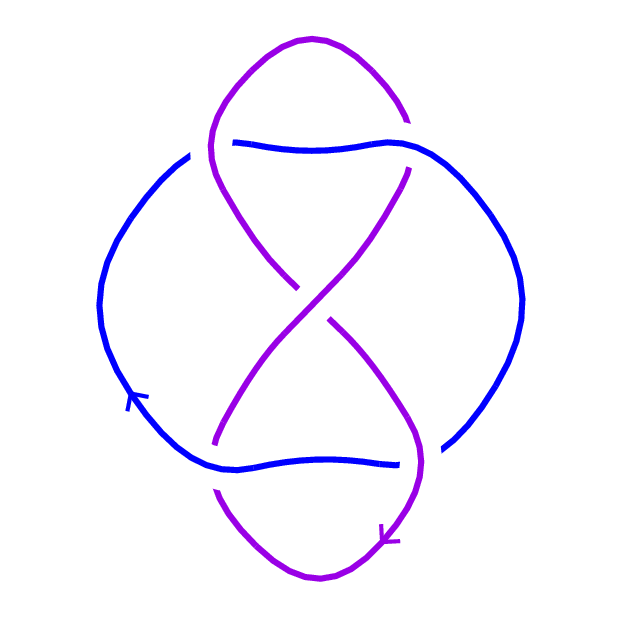} 
   \qquad\includegraphics[width=3cm]{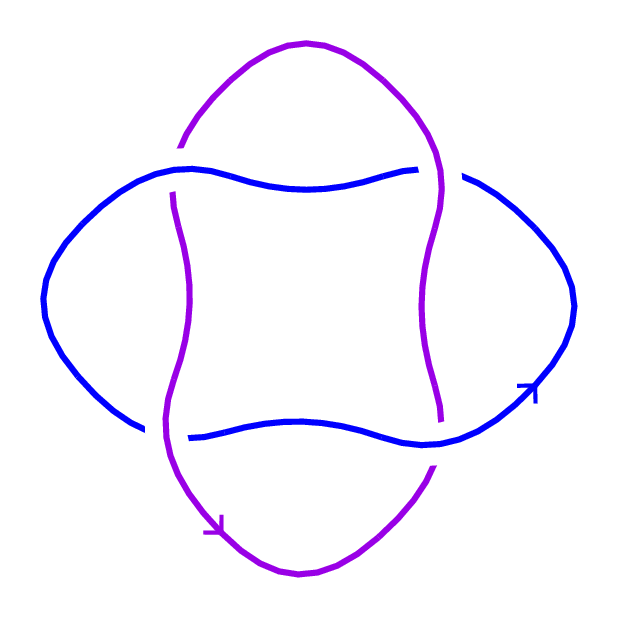}  \qquad\includegraphics[width=3cm]{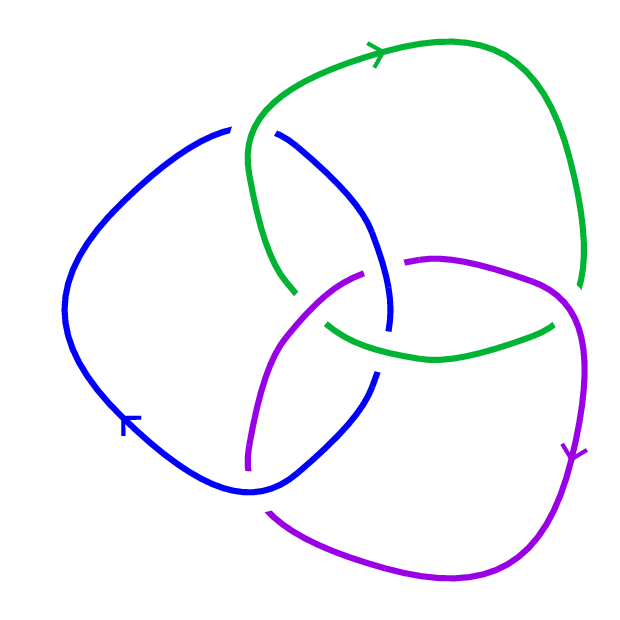}
\caption{The Hopf link, the Whitehead link, the torus link~$T(2,4)$, and the Borromean rings. (Images from LinkInfo~\cite{LinkInfo}.)}
    \label{fig:ex}
\end{figure}

	\item The Whitehead link has linking number~0 and Alexander polynomial~$(t_1-1)(t_2-1)$, hence Hosokawa polynomial~$1$. By Theorem~\ref{thm:main}, its signature is
	constant (equal to~1) on~$\T^2\setminus\{(1,1)\}$.
	 This explains the results observed in~\cite[Example~4.9]{CMP23}.
	Note however that~$\sigma_L(1,1)=0$ (as~$\sigma_L(1,1)$ only depends on the linking numbers and can easily be seen to vanish for the trivial link). This shows that in general, Theorem~\ref{thm:main} does not hold on the full torus~$\T^\mu$ for~$\mu>1$.
	
	\item The torus link~$T(2,4)$ illustrated in Figure~\ref{fig:ex} (right) has linking number~$2$ and Alexander polynomial~$t_1t_2+1$, so its signature is constant on the connected components of the complement
	of the zeros of its Hosokawa polynomial~$(t_1-1)(t_2-1)(t_1t_2+1)$.
	And indeed, one can show that the extended signature is equal to
	\[
	\sigma_L(\omega_1,\omega_2)=-\sgn[\mathrm{Re}((\omega_1-1)(\omega_2-1))]
	\]
	 for all~$(\omega_1,\omega_2)\in\mathbb{T}^2\setminus\{(1,1)\}$.
	 (This follows from~\cite[Example~2.4]{CF08} for~$\omega_1,\omega_2\in S^1\setminus\{1\}$
	and from~\cite[Theorem~4.6]{CMP23} for~$\omega_1=1$ or~$\omega_2=1$.) The graph of~$\sigma_L$ is illustrated below, where~$\mathbb{T}^2$ is pictures as a square, and the thick black lines correspond to the value~$0$.
	
	\begin{figure}[h]
\centering
\begin{overpic}[width=2cm]{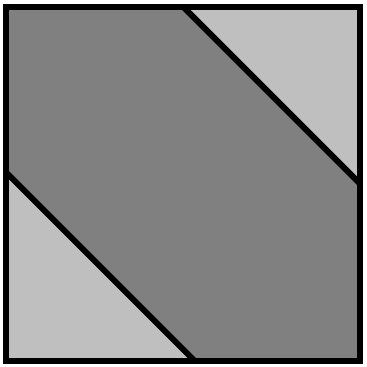}
 \put (15,15){$1$}
    \put (80,80){$1$}
        \put (40,45){$-1$}
    \end{overpic}
\end{figure}
	
	\item The~(3-colored) Borromean rings have vanishing linking numbers and Alexander polynomial $(t_1-1)(t_2-1)(t_3-1)$, hence Hosokawa polynomial~$1$ once again. Therefore, their signature is constant (equal to~0) on~$\T^3\setminus\{(1,1,1)\}$. This is consistent with the fact that the Borromean rings are amphicheiral.

	\end{enumerate}
	\end{exs}

	\begin{rem}
	Note that in general, the fact that~$\eta_L$ is equal to~$r$ on~$\Sigma_r(L)\setminus\Sigma_{r+1}(L)$ does {\em not\/} hold on the full pointed torus~$\T^\mu\setminus\{(1,\dots,1)\}$. For example, consider the $4$-colored link~$L$ given by the distant sum of a $3$-colored Borromean rings~$B$ with a trivial knot~$U$ of color $4$. Then, the associated manifold~$M_L$ is the connected sum of~$M_B=S^1\times S^1\times S^1$ and~$M_U=S^1\times S^2$.  Since both yield trivial modules~$H_1(M_B;\Lambda)=H_1(M_U;\Lambda)=0$, a Mayer-Vietoris argument leads to~$\A(L)\simeq \Ker(\varepsilon)$, the augmentation ideal. As in the proof of~\eqref{eq:Sigma}, this yields~$\Sigma_1(L)=\T^4\setminus\{(1,1,1,1)\}$ and~$\Sigma_r(L)=\emptyset$ for~$r>1$.
	By Theorem~\ref{thm:main}, we have~$\eta_L(\omega)=1$ as long as at most two coordinates of~$\omega$ are equal to~$1$. However, if~$\omega=(1,1,1,\omega_4)$ with~$\omega_4\neq 1$, then~\cite[Proposition~4.6]{CMP23} and a Mayer-Vietoris argument yield~$\eta_L(\omega)=\dim H_1(M_L;\C^\omega)=\dim H_1(M_B \# M_U;\C^\omega)=3$.	
	\end{rem}

\section{Invariance by concordance}
\label{sec:concordance}

The aim of this section is to show that the extended signature and nullity are invariant under concordance of colored links,
a result known to hold for the non-extended versions~\cite[Corollary~3.13]{CNT} (see also~\cite[Theorem~7.1]{CF08}).
We first recall the relevant definition.	

\begin{definition}
	\label{def:concordance}
	Two~$\mu$-colored links~$L=L_1\cup\dots\cup L_\mu$ and~$L'=L'_1\cup\dots\cup L'_\mu$ are said to be {\em concordant\/} if there exists a collection of embedded locally flat disjoint cylinders in~$S^3\times [0,1]$ such that each cylinder has one boundary component
	in~$L_i\subset S^3\times\{0\}$ and the other in~$L_i'\subset S^3\times \{1\}$ for some~$i$.
\end{definition}

We can now state the main result of this section.

\begin{thm}
	\label{thm:conc}
	If~$L$ and~$L'$ are concordant colored links, then~$\sigma_L(\omega)=\sigma_{L'}(\omega)$ and~$\eta_L(\omega)=\eta_{L'}(\omega)$ for all~$\omega\in\T^\mu_!:=\{\omega\in\T^\mu\mid p(\omega)\neq 0 \text{ for all $p\in\Lambda$ such that $p(1,\dots,1)=\pm 1$}\}$.
\end{thm}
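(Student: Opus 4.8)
The plan is to express $\sigma_L(\omega)-\sigma_{L'}(\omega)$ as the twisted signature of a closed $4$-manifold that visibly carries no signature, and to dispatch the nullity by a direct homological computation. A concordance $C$ between $L$ and $L'$ preserves all linking numbers, hence the plumbing graph of Section~\ref{sub:ML} and the Seifert longitudes, so $X_C:=(S^3\times[0,1])\setminus\nu(C)$ glues to $-(P_L\times[0,1])$ exactly as in that section to give a compact oriented $N$ with $\partial N=M_L\sqcup(-M_{L'})$ carrying a meridional homomorphism $\varphi_N\colon\pi_1(N)\to\Z^\mu$ restricting to meridional homomorphisms on the two ends. Thus $(M_L,\varphi)$ and $(M_{L'},\varphi')$ are bordant over $\Z^\mu$, so they agree in $\Omega_3(\Z^\mu)$ and $\mu_L=\mu_{L'}$; consequently $L^\#$ and $(L')^\#$ carry the same Borromean summand and are themselves concordant. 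Running the same construction for a concordance $C^\#$ between $L^\#$ and $(L')^\#$ produces a compact oriented cobordism $N^\#$ with $\partial N^\#=M_{L^\#}\sqcup(-M_{(L')^\#})$ and a meridional homomorphism extending $\varphi^\#$ on both ends.

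The key homological input is the concordance invariance of the non-extended theory in the form of~\cite[Corollary~3.13]{CNT}: since $\nu(C^\#)$ is a union of $D^2$-bundles over cylinders, $X_{C^\#}$ is an integral homology cobordism rel $X_{L^\#}$ (Alexander duality in $S^3\times[0,1]$), and for $\omega\in\T^\mu_!$ it is moreover a $\C^\omega$-homology cobordism rel $X_{L^\#}$ --- equivalently, $H_*(X_{C^\#},X_{L^\#};\Lambda)$ is annihilated by Fox-Milnor polynomials $p$ with $p(1,\dots,1)=\pm1$, so it becomes acyclic after base change to $\C^\omega$ for such $\omega$. Because the plumbing part $P_{L^\#}\times[0,1]$ and the gluing region are products, a Mayer-Vietoris argument (as in the proof of Proposition~\ref{prop:Hosokawa}) transfers this to $N^\#$: the inclusions induce isomorphisms $H_*(M_{L^\#};\C^\omega)\xrightarrow{\ \cong\ }H_*(N^\#;\C^\omega)\xleftarrow{\ \cong\ }H_*(M_{(L')^\#};\C^\omega)$ for $\omega\in\T^\mu_!$, and likewise with $\Z$-coefficients. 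In particular $H_2(\partial N^\#;\C^\omega)\to H_2(N^\#;\C^\omega)$ is onto, so Remark~\ref{rems:int}\ref{rems:int-3} gives $\sigma_\omega(N^\#)=0$ for $\omega\in\T^\mu_!$, and similarly $\sigma(N^\#)=0$.

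Now form the closed oriented $4$-manifold $Z:=W_F\cup_{M_{L^\#}}N^\#\cup_{M_{(L')^\#}}(-W_{F'})$, where --- as allowed, since $\sigma_{L'}$ and $\eta_{L'}$ do not depend on the choice of meridional homomorphism --- we take $W_{F'}$ built from the homomorphism carried by $N^\#$ on $M_{(L')^\#}$; then $Z$ inherits a homomorphism $\pi_1(Z)\to\Z^\mu$. The gluing is along closed $3$-manifolds, so the Novikov-Wall formula~\eqref{eq:NW} has vanishing Maslov term, whence $\sigma_\omega(Z)=\sigma_\omega(W_F)+\sigma_\omega(N^\#)-\sigma_\omega(W_{F'})=\sigma_L(\omega)-\sigma_{L'}(\omega)$ for $\omega\in\T^\mu_!$, while $\sigma(Z)=\sigma(W_F)-\sigma(W_{F'})$. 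By the last sentence of Remark~\ref{rem:rho} the untwisted signatures $\sigma(W_F)$ and $\sigma(W_{F'})$ depend only on the linking numbers and colors, which agree, so $\sigma(Z)=0$. Finally, $Z$ being closed, applying~\eqref{eq:rho-W} with $\partial Z=\emptyset$ (so that the $\rho$-term vanishes) yields $\sigma_\omega(Z)=\sigma(Z)=0$, hence $\sigma_L(\omega)=\sigma_{L'}(\omega)$ on $\T^\mu_!$.

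For the nullity, the correction terms of Definition~\ref{def:sign-null} involve only $\mu_L=\mu_{L'}$, so it suffices to prove $\eta_\omega(W_F)=\eta_\omega(W_{F'})$. The radical of the $\C^\omega$-twisted intersection form of a compact oriented $4$-manifold $W$ equals the image of $H_2(\partial W;\C^\omega)\to H_2(W;\C^\omega)$, whose dimension --- by the long exact sequence of $(W,\partial W)$, Poincar\'e-Lefschetz duality, and the Universal Coefficient Theorem over the field $\C$ --- equals $\dim_\C H_2(\partial W;\C^\omega)$ whenever $H_1(W;\C^\omega)=0$. For $\omega\in\T^\mu_!$ with $\omega\neq(1,\dots,1)$ one has $H_1(W_F;\C^\omega)=H_1(\Z^\mu;\C^\omega)=0$, and likewise for $W_{F'}$, so $\eta_\omega(W_F)=\dim_\C H_2(M_{L^\#};\C^\omega)=\dim_\C H_2(M_{(L')^\#};\C^\omega)=\eta_\omega(W_{F'})$ by the $\C^\omega$-homology cobordism $N^\#$; at $\omega=(1,\dots,1)$ one instead uses that the real intersection form of $W_F$ depends only on the linking numbers and colors of $L$ (cf.~\cite[Lemma~A.6]{CMP23}). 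This yields $\eta_L(\omega)=\eta_{L'}(\omega)$ on $\T^\mu_!$. The main obstacle is the homological input of the second paragraph --- that the concordance exterior is a $\C^\omega$-homology cobordism precisely on $\T^\mu_!$; this is the crux, imported from~\cite{CNT}, while everything else is bookkeeping around the (delicate) construction of $W_F$, the Novikov-Wall formula, and the signature-defect identity~\eqref{eq:rho-W}.
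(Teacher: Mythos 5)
Your overall strategy is sound and, in its mechanics, genuinely different from the paper's. The paper builds a single cobordism $W=X_C\cup(P_L\times[0,1])$ from $M_L$ to $M_{L'}$ over $\Z^\mu$, reduces the signature statement to $\sigma_\omega(W)=0$ via the $\rho$-invariant identities \eqref{eq:rho-W}--\eqref{eq:rho}, and proves that vanishing by Novikov--Wall applied to the splitting of $W$ along $\partial\nu(C)$, with the Maslov index killed by Lemma~\ref{lem:alg} after an explicit identification of the three Lagrangians; the nullity is handled separately through $\eta_L(\omega)=\dim H_1(M_L;\C^\omega)$ and a Mayer--Vietoris comparison in which only the map $H_1(\partial\nu(L);\C^\omega)\to H_1(X_L;\C^\omega)$ needs the concordance input. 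You instead pass to the auxiliary links ($\mu_L=\mu_{L'}$ via the bordism $N$, hence $L^\#$ concordant to $(L')^\#$), prove that $N^\#$ is a $\C^\omega$-homology cobordism between $M_{L^\#}$ and $M_{(L')^\#}$ on $\T^\mu_!$ (five lemma on the internal-twisted Mayer--Vietoris, with the same key input~\cite[Lemma~2.16]{CNT}), so that $\sigma_\omega(N^\#)=0$ follows directly from Remark~\ref{rems:int}\ref{rems:int-3}, and then close up with $W_F$ and $-W_{F'}$ and use ``twisted signature $=$ untwisted signature for closed manifolds''. This buys you a proof with no Lagrangian/Maslov computation at all, at the cost of having to concord the sharp links and to match choices; the paper's route needs only the cobordism between $M_L$ and $M_{L'}$ but must then do the Novikov--Wall bookkeeping. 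Both proofs ultimately rest on the same two pillars: the $\C^\omega$-acyclicity of $(X_C,X_L)$ on $\T^\mu_!$ and the signature-defect identity \eqref{eq:rho-W} together with the fact that $\sigma(W_F)$ depends only on linking data (Remark~\ref{rem:rho}).

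Two points need attention. First, and this is the only real soft spot: for the nullity at $\omega=(1,\dots,1)$ you invoke ``the real intersection form of $W_F$ depends only on the linking numbers and colors'', citing \cite[Lemma~A.6]{CMP23}; but what is available (and what Remark~\ref{rem:rho} records) is only the statement about the untwisted \emph{signature} $\sigma(W_F)$, not about the nullity of the form, and your injectivity argument breaks down there because $H_1(W_F;\C)\simeq\C^\mu\neq 0$. The fact you need is true, but requires a different justification: e.g.\ \cite[Proposition~4.5]{CMP23} gives $\eta_L(1,\dots,1)=\dim H_1(M_L;\C)-\mu$, and $\dim H_1(M_L;\C)$ is preserved (your $N$ is an integral homology cobordism, or alternatively the untwisted Mayer--Vietoris for $M_L=X_L\cup P_L$ involves only data determined by linking numbers and colors). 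Second, some bookkeeping is asserted rather than argued: the concordance between $L^\#$ and $(L')^\#$ requires the standard fact that one may isotope $C$ off a vertical ball so as to take the distant sum with the product concordance of the Borromean summands; the homomorphism carried by $N^\#$ restricts to \emph{some} meridional homomorphism on $M_{L^\#}$, so the independence of $\sigma_L,\eta_L$ from this choice must be invoked on the $L$-side as well as the $L'$-side; and the orientations in $Z=W_F\cup N^\#\cup(-W_{F'})$ need adjusting (as written the two pieces induce the same orientation on $M_{L^\#}$), though this only affects signs and not the vanishing conclusion. None of these is fatal, but the $(1,\dots,1)$ case should be rewritten with a correct reference.
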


Note that this definition of~$\T^\mu_!$ is a straighforward extension from~$(S^1\setminus\{1\})^\mu$ to the full torus~$\T^\mu$
of~\cite[Definition~2.14]{CNT}, itself generalizing~\cite{NP17} which corresponds to the case~$\mu=1$.
Note also that~$\T^\mu_!$ is dense in~$\T^\mu$ as it contains the elements whose
coordinates are all~$p^n$-roots of unity for some integer~$n$ and a common but arbitrary prime~$p$~\cite[Proposition~2.17]{CNT}.

Let us point out that the authors of~\cite{CNT} prove a stronger statement for the non-extended signature and nullity, namely their invariance under so-called {\em 0.5-solvable cobordisms\/}.
Here, we show a weaker statement (invariance under concordance) for the extended versions, relying on some technical lemma of~\cite{CNT} (see also~\cite{NP17,COT03}).
Therefore, we wish to reiterate the remark stated after Proposition~\ref{prop:Hosokawa}: even though the result is new and,
as far as we can tell, not obvious, we do not claim much originality for its proof.

\begin{proof}[Proof of Theorem~\ref{thm:conc}]
	Let us start with the nullity. For any~$\mu$-colored link~$L$, Proposition~4.5 of~\cite{CMP23} states that
	\[
	\eta_L(\omega)=\begin{cases}
		\dim H_1(M_L;\C^\omega)&\text{if $\omega\neq (1,\dots,1)$;}\\
		\dim H_1(M_L;\C)-\mu&\text{if $\omega= (1,\dots,1)$.}
	\end{cases}
	\]
	Now, consider the Mayer-Vietoris exact sequence for~$M_L=X_L\cup P_L$; it reads
	\[
	\begin{aligned}
		H_1(\partial\nu(L);\C^\omega)&\stackrel{{i_1}\choose{j_1}}{\longrightarrow} H_1(X_L;\C^\omega)\oplus H_1(P_L;\C^\omega)\longrightarrow H_1(M_L;\C^\omega)\\
		&\longrightarrow H_0(\partial\nu(L);\C^\omega)\stackrel{{i_0}\choose{j_0}}{\longrightarrow} H_0(X_L;\C^\omega)\oplus H_0(P_L;\C^\omega)\,.
	\end{aligned}
	\]
	Note that for any fixed~$\omega\in\T^\mu$, the vector spaces~$H_*(\partial\nu(L);\C^\omega)$
	and~$H_0(X_L;\C^\omega)$ only depend on the linking numbers and colors of the components of~$L$.
	The manifold~$P_L$ is also determined by this data.
	A priori, the vector spaces~$H_*(P_L;\C^\omega)$ could depend on the choice of the homomorphism~$H_1(P_L)\to\Z^\mu$ used to define the twisted coefficients, but one can easily check that this is not the case, see e.g. the proof of~\cite[Lemma~A.2]{CMP23}.
	Observe that the inclusion induced maps~$i_0,j_0$ are also determined by this data. Finally, one can check that~$j_1$ also only depends on the linking numbers and colors of the components of~$L$, see the proof of~\cite[Lemma~A.2]{CMP23}.
	Since linking numbers and colors are invariant under concordance, the invariance of the nullity now boils down to the invariance under concordance of the inclusion induced map~$i_1\colon H_1(\partial\nu(L);\C^\omega)\to H_1(X_L;\C^\omega)$.
	
	To make this statement more precise, fix two concordant colored links~$L$ and~$L'$. By definition, there exists a collection~$C\subset S^3\times [0,1]$ of locally flat disjoint cylinders such that each cylinder has one boundary component in~$L_i\subset S^3\times\{0\}$ and the other in~$L_i'\subset S^3\times \{1\}$ for some~$i$. Since~$C$ is locally flat, we can consider a tubular neighborhood~$\nu(C)$ of~$C$ whose complement $X_C:=(S^3\times [0,1])\setminus\nu(C)$ restricts to~$X_C\cap(S^3\times\{0\})=X_L$ and~$X_C\cap(S^3\times\{1\})=X_{L'}$.
	This concordance defines a homeomorphism~$\phi\colon\partial\nu(L)\to\partial\nu(L')$ which maps a meridian
	(resp. Seifert longitude) of any component of~$L$ to a meridian
	(resp. Seifert longitude) of the corresponding component of~$L'$. This homeomorphism~$\phi$ fits in the commutative diagram
	\[
	\begin{tikzcd}[row sep=small]
		\partial\nu(L)\ar[r,"i"] \ar[dd,"\phi" '] & X_L  \ar[d]\\
		& X_C\\
		\partial\nu(L')\ar[r,"i'"] & X_{L'}  \ar[u]
	\end{tikzcd}
	\]
	where the other four maps are inclusions. Our precise claim is that for all~$\omega\in\T^\omega_!$,
	the vertical maps in the induced commutative diagram of vector spaces
	\begin{equation}
		\label{eq:diag}
		\begin{tikzcd}[row sep=small]
			H_1(\partial\nu(L);\C^\omega)\ar[r,"i_1"] \ar[dd,"\phi_*" ',"\simeq"] & H_1(X_L;\C^\omega)  \ar[d,"\simeq"]\\
			& H_1(X_C;\C^\omega)\\
			H_1(\partial\nu(L');\C^\omega)\ar[r,"i_1'"] & H_1(X_{L'};\C^\omega) \ar[u,"\simeq" ']
		\end{tikzcd}
	\end{equation}
	are isomorphisms. This is obvious for~$\phi_*$, but not for the two maps on the right-hand side.
	
	To check this fact, note that the inclusion of the pair~$(S^3\times[0,1],X_L)$ in~$(S^3\times[0,1],X_C)$ induces
	isomorphisms in homology with integer coefficients: this easily follows from excision and the definition of concordance. As a consequence,
	the exact sequence of the triple~$(S^3\times[0,1],X_C,X_L)$ yields~$H_n(X_C,X_L;\Z)=0$ for all~$n$.
	By~\cite[Lemma~2.16]{CNT}, this
	implies that~$H_n(X_C,X_L;\C^\omega)$ vanishes for all~$n$ and all~$\omega\in\T^\omega_!$.
	(Note that this result is stated for~$\omega\in\T^\mu_!\cap(S^1\setminus\{1\})^\mu$,
	but the proof extends verbatim, as it never uses the fact that no coordinate of~$\omega$ is equal to~$1$.)
	By the exact sequence
	of the pair~$(X_C,X_L)$, the inclusion induced map~$H_1(X_L;\C^\omega)\to H_1(X_C;\C^\omega)$ is an isomorphism.
	Since this holds for~$L'$ as well, the vertical maps in~\eqref{eq:diag} are indeed isomorphisms. By the discussion above,
	this implies the equality~$\eta_L(\omega)=\eta_{L'}(\omega)$ for all~$\omega\in\T^\omega_!$.
	
	\medskip
	
	We now turn to the signature. Given two concordant links~$L$ and~$L'$, consider the exterior~$X_C$ of a concordance~$C$ as above. Fix the orientation of~$X_C$ so that the induced orientation on its boundary yields~$X_L\sqcup -X_{L'}\subset\partial X_C$. Observe that~$C$ defines a correspondance between components of~$L$
	and of~$L'$ which preserves colors (by definition) and linking numbers. Since the manifold~$P_L$ only depends on this
	data, we have the identity~$P_L=P_{L'}$.
	This also implies that we have a homeomorphism~$\partial\nu(C)\stackrel{h}{\simeq} C\times S^1$ which restricts to homeomorphisms~$\partial\nu(L)\simeq L\times S^1$ on~$S^3\times\{0\}$ and~$\partial\nu(L')\simeq L'\times S^1$
	on~$S^3\times\{1\}$ defining the standard meridians and Seifert longitudes of~$L$ and~$L'$ (recall Section~\ref{sub:ML}). Let~$W$ be the oriented $4$-manifold obtained by gluing~$X_C$ and~$P_L\times[0,1]$ along~$\partial\nu(C)\subset \partial X_C$ and~$\partial P_L\times[0,1]\subset\partial(P_L\times[0,1])$ via the homeomorphism~$\partial\nu(C)\to\partial P_L\times[0,1]$ given by the composition
	\begin{align*}
		\partial\nu(C)\stackrel{h}{\simeq}  C\times S^1= \bigsqcup_K S^1\times [0,1]\times S^1&\longrightarrow \bigsqcup_K\partial D_K\times S^1\times[0,1]=\partial P_L\times[0,1]\\
		(x,t,y)&\longmapsto(x,y,t)\,.
	\end{align*}
	By construction, this homeomorphism restricts on~$S^3\times\{0,1\}$ to the maps used to construct~$M_L=X_L\cup_\partial -P_L$ and~$M_{L'}=X_{L'}\cup_\partial -P_{L'}$. Therefore, the oriented~4-manifold~$W$ has oriented
	boundary $\partial W=M_L\sqcup -M_{L'}$.
	Moreover, and as mentioned above, the inclusions~$X_L,X_{L'}\subset X_C$ induce isomorphisms~$H_1(X_L;\Z)\simeq H_1(X_C;\Z)\simeq H_1(X_{L'};\Z)$. Hence, the Mayer-Vietoris argument in the proof of~\cite[Lemma~2.11]{CMP23} shows that the homomorphism~$\pi_1(X_C)\to\Z^\mu$ defined by the colors yields a homomorphism~$\Phi\colon\pi_1(W)\to\Z^\mu$ which simultaneously extends meridional homomorphisms~$\varphi\colon\pi_1(M_L)\to\Z^\mu$ and~$\varphi'\colon\pi_1(M_{L'})\to\Z^\mu$. In other words, the equality~$\partial W=M_L\sqcup -M_{L'}$ holds over~$\Z^\mu$.
	
	We now show that the theorem follows from checking that~$\sigma_\omega(W)=0$ for all~$\omega\in\T^\mu_!$.
	This can be done using the definition of~$\sigma_L$ via the auxiliary link~$L^\#$ and $4$-manifold~$W_F$ over~$\Z^\mu$ as in Section~\ref{sub:ext}, but we will use Remark~\ref{rem:rho} instead, which connects the extended signature with the~$\rho$-invariant. Indeed, for all~$\omega\in\T^\mu$, we have the equalities
	\begin{align*}
	\sigma(W)-\sigma_\omega(W)&\stackrel{\eqref{eq:rho-W}}{=}\rho(\partial W,\chi_\omega\circ\Phi)=\rho(M_{L},\chi_\omega\circ\phi)-\rho(M_{L'},\chi_\omega\circ\phi')\\
	&\stackrel{\eqref{eq:rho}}{=}(\sigma(W_F)-\sigma_{L}(\omega))-(\sigma(W_{F'})-\sigma_{L'}(\omega))=\sigma_{L'}(\omega)-\sigma_{L}(\omega)\,.
	\end{align*}
In the last equality, we used the fact that~$\sigma(W_F)$ is determined by the linking numbers and colors of the components of~$L$ (recall Remark~\ref{rem:rho}); since this data coincides for~$L$ and~$L'$, the equality~$\sigma(W_F)=\sigma(W_{F'})$ follows.
 As the element~$\omega=(1,\dots,1)$ belongs to the set~$\T^\mu_!$, it only remains to check that~$\sigma_\omega(W)=0$ for all~$\omega\in\T^\mu_!$.
	
	To do so, let us apply the Novikov-Wall theorem to the decomposition~$W=X_C\cup (P_L\times [0,1])$.
	First note that~$\sigma_\omega(P_L\times[0,1])=0$ for all~$\omega\in\T^\mu$, as the intersection form vanishes identically: by Remark~\ref{rems:int}\ref{rems:int-3}, this follows from the fact that the inclusion~$\partial(P_L\times [0,1])\subset P_L\times [0,1]$ induces epimomorphisms in homology (with~$\C^\omega$-coefficients).
	Recall also that since~$H_n(X_C,X_L;\Z)=0$ for all~$n$, the aforementioned~\cite[Lemma~2.16]{CNT} implies that~$H_n(X_C,X_L;\C^\omega)$ vanishes for all~$n$ and all~$\omega\in\T^\omega_!$; as a consequence, the inclusion
	induced map~$H_2(X_L;\C^\omega)\to H_2(X_C;\C^\omega)$ is an isomorphism, and~$H_2(\partial X_C;\C^\omega)\to H_2(X_C;\C^\omega)$ an epimorphism. By Remark~\ref{rems:int}\ref{rems:int-3}, we have~$\sigma_\omega(X_C)=0$ for all~$\omega\in\T^\omega_!$. Hence, the Novikov-Wall theorem~\eqref{eq:NW} yields
	\[
	\sigma_\omega(W)=\sigma_\omega(X_C)+\sigma_\omega(P_L\times [0,1])+\mathit{Maslov}(\mathcal{L}_-,\mathcal{L}_0,\mathcal{L}_+)=\mathit{Maslov}(\mathcal{L}_-,\mathcal{L}_0,\mathcal{L}_+)\,,
	\]
	and it remains to check that this Maslov index vanishes.
	
	Writing~$(H,\,\cdot\,)$ (resp.~$(H',\,\cdot'\,)$) for the complex vector space~$H_1(\partial\nu(L);\C^\omega)$ (resp.~$H_1(\partial\nu(L');\C^\omega)$) endowed with the
	twisted intersection form, we have Lagrangian subspaces given by the kernels
	of the inclusion induced maps
	\begin{align*}
		V_-&:=\ker\left(H\to H_1(X_{L};\C^\omega)\right)\,,\quad V_-':=\ker\left(H'\to H_1(X_{L'};\C^\omega)\right)\,,\\
		V_+&:=\ker\left(H\to H_1(P_{L};\C^\omega)\right)\,,\quad V_+':=\ker\left(H'\to H_1(P_{L'};\C^\omega)\right)\,.
	\end{align*}
	Since the oriented boundary of~$W$ is~$M_L\sqcup -M_{L'}$, the corresponding intersection form on~$H\oplus H'$
	is given by~$(x,x')\bullet (y,y')=x\cdot y-x'\cdot'y'$.
	With these notations, the Lagrangian subspaces of~$(H\oplus H',\bullet)$ are given by
	\begin{align*}
		\mathcal{L}_-&=\ker\left(H\oplus H'\longrightarrow H_1(X_L;\C^\omega)\oplus H_1(X_{L'};\C^\omega)\right)=V_-\oplus V_-'\\
		\mathcal{L}_0&=\ker\left(H\oplus H'\longrightarrow H_1(\partial\nu(C);\C^\omega)\right)\\
		\mathcal{L}_+&=\ker\left(H\oplus H'\longrightarrow H_1(P_L;\C^\omega)\oplus H_1(P_{L'};\C^\omega)\right)=V_+\oplus V_+'\,.
	\end{align*}
	Since the isomorphism~$\phi_*\colon H\to H'$ from~\eqref{eq:diag} is induced by an orientation-preserving homeomorphism~$\phi\colon\partial\nu(L)\to\partial\nu(L')$, it is an isometry.
	Moreover, it maps the Lagrangians~$V_-$
	onto $V_-'$ and~$V_+$
	onto~$V_+'$, yielding~$\mathcal{L}_-=V_-\oplus\phi_*(V_-)$ and~$\mathcal{L}_+=V_+\oplus\phi_*(V_+)$. Finally, the vector space~$H$ can be explicitly computed as
	\[
	H=\bigoplus_{K\in\mathcal{K}(\omega)}\C\left<m_K,\ell_K\right>\,,
	\]
	where
	\[
	\mathcal{K}(\omega)=\bigcup_{i=1}^\mu\mathcal{K}_i(\omega)\quad\text{and}\quad
	\mathcal{K}_i(\omega)=\{K\subset L_i\mid \textstyle{\omega_i=\prod_{j\neq i}\omega_j^{\lk(K,L_j)}=1}\}\,,
	\]
	and similarly for~$H'$ with the corresponding index set~$\mathcal{K}'(\omega)$. For any~$K\in\mathcal{K}(\omega)$,
	the corresponding~$K'\subset L'$ belongs to~$\mathcal{K}'(\omega)$ and the elements~$m_K-m_{K'}$ and~$\ell_K-\ell_{K'}$ belong to~$\mathcal{L}_0$. By a dimension count, they freely generate this Lagrangian, which is therefore
	equal to
	\[
	\mathcal{L}_0=\{x\oplus-\phi_*(x)\mid x\in H\}\,.
	\]
	We can now apply Lemma~\ref{lem:alg} and get~$\mathit{Maslov}(\mathcal{L}_-,\mathcal{L}_0,\mathcal{L}_+)=0$.
\end{proof}

  \section{Links not concordant to their mirror image}
  \label{sec:ex}
  
  We now present an infinite family of links that are not concordant to their mirror image, a fact that is detected by
  the extended signature, but that cannot be proved
  using any of the following concordance invariants:
  \begin{enumerate}
  \item Non-extended multivariable signatures~\cite{CF08,CNT}.
  \item Multivariable Alexander polynomials~\cite{Kaw78}.
  \item Blanchfield forms over the localised ring~$\Lambda_S$~\cite[Chapter~IX, Theorem~6]{Hil81}.
  \item The linking numbers and Milnor triple linking numbers~\cite{Mil54,Sta65,Cas75}.
\end{enumerate}

However, we shall see that this fact can be detected by the Milnor number~$\mu(1123)$.

\medskip

For any~$n\in\mathbb{N}$, we consider the $3$-component link~$L(n) = K_1\cup K_2\cup K_3$ illustrated in Figure~\ref{fig:brunnian_links}. Note that this is the unlink if~$n=0$, and it is a Brunnian link for all~$n\in\mathbb{N}$. In particular, all the linking numbers are equal to~$0$. To compute most of the aforementioned invariants of~$L(n)$, we will use C-complexes, following~\cite{Cim,CF08,CFT}. Let us refer to~\cite{CF08} for the definition of C-complexes and the associated generalized Seifert matrices, and simply point out here that~$L(n)$ has an evident C-complex~$F(n)$ consisting of three discs and four clasps. This is shown in Figure~\ref{fig:C-complex} in the case~$n=1$, together with two curves representing a basis of~$H_1(F(n);\mathbb{Z}) \cong \mathbb{Z}^2$.

\begin{figure}
	\centering
	\begin{overpic}[width=4cm]{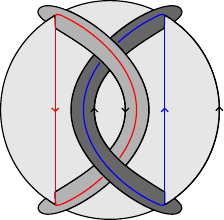}
		\put (15,45) {\color{red}$\alpha$}
		\put (80,45) {\color{blue}$\beta$}
	\end{overpic}
	\caption{The C-complex $F(1)$ and a basis of its first homology.}
	\label{fig:C-complex}
\end{figure}

It is now straightforward to compute that, for any choice of signs~$\varepsilon\in\{\pm 1\}^3$, the associated generalized Seifert matrix is~$A^\varepsilon = \begin{pmatrix} 0 & n \\ n & 0 \end{pmatrix}$. Following~\cite[Section~2]{CF08}, for any~$\omega\in (S^1\setminus\{1\})^3$, we get a matrix $$H(\omega)\coloneqq\sum_{\varepsilon\in\{\pm 1\}^3}\bigg(\prod_{i=1}^3(1-\overline{\omega}_i^{\varepsilon_i})\bigg)A^\varepsilon = \bigg(\prod_{i=1}^{3}(1-\omega_i)(1-\bom_i)~\bigg) \begin{pmatrix} 0 & n \\ n & 0 \end{pmatrix}\,,$$
and the non-extended multivariable signature is thus
$$\sigma_{L(n)}(\omega)\coloneqq\sign(H(\om)) = \pm \sign \begin{pmatrix} 0 & n \\ n & 0 \end{pmatrix} = 0\,.$$
We therefore see that the non-extended multivariable signature does not allow us to distinguish~$L(n)$ from its mirror image~$-L(n)$ (nor~$L(n)$ from~$L(m)$ for~$n\neq m$).

As for the multivariable Alexander polynomial, it can easily be computed (in the normalized version known as Conway potential function) from the generalized Seifert matrices using the main theorem from~\cite{Cim}, and we obtain
\[
\nabla_{L(n)} (t_1,t_2,t_3) = -n^2(t_1-t_1^{-1})^3(t_2-t_2^{-1})(t_3-t_3^{-1})\,.
\]
We therefore see that, for~$n\neq m$,~$L(n)$ and~$L(m)$ are not equivalent, and in fact not concordant~\cite{Kaw78}. However, since for $3$-component links the Conway function is invariant under mirror image, we still cannot distinguish $L(n)$ from its mirror.

Since~$\nabla_{L(n)}\neq 0$ for~$n\neq 0$, in order to compute the Blanchfield form of $L(n)$ over~$\Lambda_S$ one can apply~\cite[Theorem 1.2]{CFT}. Notice, however, that in this case we need to work with a so-called \emph{totally connected} C-complex, a condition which is not satisfyied by the C-complex~$F(n)$ introduced previously. One can easily change~$F(n)$ to a totally connected C-complex by introducing two additional clasps. The computation of the associated matrices being elementary but rather tedious, we will not carry it out here; let us just observe that, in the end, one obtains that the Blanchfield form of~$L(n)$ over~$\Lambda_S$ is represented by a metabolic matrix, and can therefore not distinguish~$L(n)$ from~$-L(n)$.

Finally, as we have already mentioned, all the linking numbers of $L(n)$ vanish, while Milnor's triple linking number~$\mu(123)$ is invariant under mirror image for links with $3$ components~\cite{Mil57}. In fact, it is not hard to show that for~$L(n)$ we have~$\mu(123)=0$ (one can for instance apply the results of~\cite{MM03} to the C-complex $F(n)$).

\medskip

We now turn to the computation of the extended signature. By~\cite[Corollary 4.8]{CMP23}, since all the linking numbers vanish and~$L(n)'\coloneq K_2\cup K_3$ is the unlink, for any~$\omega\in(S^1\setminus\{1\})^2$ we have $$\sigma_{L(n)}(1,\omega) = \sigma_{L(n)'}(\omega) + \sgn ((K_1/L(n)')(\omega)) = \sgn ((K_1/L(n)')(\omega))\,,$$ where~$(K_1/L(n)')(\omega)\in\mathbb{R}\cup\{\infty\}$ is the so-called \textit{slope} of~$K_1\cup L(n)'$, an invariant of links with a distinguished component introduced in~\cite{DFL}. The computation of the extended signature thus reduces to the computation of the slope, which can be performed using C-complexes, following~\cite{DFL3}. For that purpose, we need to find a C-complex for~$L(n)'$ disjoint from~$K_1$; such a C-complex~$F(n)'$, consisting of two disjoint tori, is shown in Figure~\ref{fig:surface_slope} for~$n=1$, together with a basis of its first homology. 
\begin{figure}
	\centering
	\begin{overpic}[width=5cm]{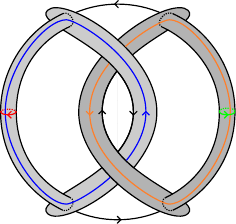}
		\put (-10,45) {\color{red}$b_1$}
		\put (8,85) {\color{blue}$a_1$}
		\put (88,85) {\color{orange}$a_2$}
		\put (105,45) {\color{green}$b_2$}
		\put (45,98) {$K_1$}
	\end{overpic}
	\caption{The C-complex $F(1)'$ and a basis of its first homology.}
	\label{fig:surface_slope}
\end{figure}

The associated Seifert matrices can be easily computed to be
$$ A^{++} = \begin{pmatrix}
	0&0&n&0\\1&0&0&0\\n&0&0&0\\0&0&1&0\end{pmatrix}\,,\, A^{+-} = \begin{pmatrix}
	0&0&n&0\\1&0&0&0\\n&0&0&1\\0&0&0&0\end{pmatrix}\,,\, A^{-+} = (A^{+-})^T\, \text{ and }\,A^{--}=(A^{++})^T,$$
so, for~$\omega = (\omega_1,\omega_2)\in(S^1\setminus\{1\})^2$, using the identity~$(1-\om_i)^{-1}+(1-\bom_i)^{-1} = 1$ we obtain 
$$ E(\omega)\coloneq \sum_{\varepsilon\in\{\pm 1\}^2}\bigg(\prod_{i=1}^2(1-\omega_i^{\varepsilon_i})^{-1}\bigg)A^\varepsilon = \begin{pmatrix}
	0&(1-\bom_1)^{-1}&n&0\\(1-\om_1)^{-1}&0&0&0\\n&0&0&(1-\bom_2)^{-1}\\0&0&(1-\om_2)^{-1}&0\end{pmatrix}\,. $$
By Alexander duality,~$E(\om)$ can naturally be considered as an operator $$E(\om):H_1(F(n)';\mathbb{C})\rightarrow H_1(S^3\setminus F(n)';\mathbb{C})\cong H^1(F(n)';\mathbb{C})\cong \operatorname{Hom}(H_1(F(n)';\mathbb{C}),\mathbb{C})\,,$$
represented as a matrix in the basis~$\{a_1,b_1,a_2,b_2\}$ of~$H_1(F(n)';\mathbb{C})$ and its dual basis of~$H^1(F(n)';\mathbb{C})$. Similarly,~$[K_1] = b_1^*+b_2^* \in H_1(S^3\setminus F(n)';\mathbb{C})\cong H^1(F(n)';\mathbb{C})$. Setting 
$$\alpha = (1-\om_1)a_1 - n(1-\om_1)(1-\bom_2)b_2 + (1-\om_2)a_2 - n(1-\bom_1)(1-\om_2)b_1 \in H_1(F(n)';\mathbb{C})\,,$$
we obtain~$E(\om)\alpha = b_1^*+b_2^* = [K_1]$, so that~$[K_1]\in \operatorname{Im}(E(\om))$. Since moreover~$\operatorname{det}E(\om) \neq 0$ for~$\omega = (\omega_1,\omega_2)\in(S^1\setminus\{1\})^2$, we see that~$\operatorname{Ker}(E(\om)) = 0$. By~\cite[Theorem 4.3]{DFL3}, we thus obtain $$(K_1/L(n)')(\om) = -K_1(\alpha) = n(1-\om_1)(1-\bom_2)+n(1-\bom_1)(1-\om_2) = 2n \Re((1-\om_1)(1-\bom_2))\,.$$
Therefore,
$$\sigma_{L(n)}(1,\om) = \sgn((K_1/L(n)')(\om)) = \sgn[\Re((1-\om_1)(1-\bom_2))]\,,$$ which is non-zero for infinitely many~$\om\in\T^2_!$. By Theorem~\ref{thm:conc} and Proposition~\ref{prop:sign-L}, we can finally conclude that~$L(n)$ is not concordant to~$-L(n)$.

\begin{rems}\label{rems:example}
	\begin{enumerate}
		\item Of course, our computations show that the slope as well can distinguish~$L(n)$ from~$-L(n)$, being itself a concordance invariant~\cite{DFL3}. We thus have infinitely many pairs of links that can be distinguished by the slope but not by the Conway functions of any of their sublinks. While the existence of such examples is not surprising, to the best of our knowledge this is the first explicit family appearing in the literature.
		
		\item By Cochran's theory of derived invariants~\cite{Coc90}, the surface~$F(n)'$ can also be used to compute the Milnor number~$\mu(1123)$ of $L(n)$. Applying~\cite[Proposition 6.5]{Coc90}, one easily gets that~$L(n)$ has~$\mu(1123) = \pm n$. Since~$\mu(1123)$ changes sign after mirror image and is a concordant invariant, we see that this Milnor number is enough to distinguish~$L(n)$ from~$-L(n)$.
	\end{enumerate}
\end{rems}
  
\bibliographystyle{plain}
\bibliography{bibliography}
	
\end{document}